\documentclass[12pt]{amsart}

\usepackage{framed}
\usepackage{braket}
\usepackage{amsmath,amssymb,amsthm,stmaryrd,url}
\usepackage{color}
\usepackage{bm}
\usepackage[all]{xy}
\usepackage{amscd}
\usepackage{graphicx}
\usepackage{ascmac}
\usepackage{BOONDOX-frak, mathrsfs}
\usepackage[top=30truemm,bottom=30truemm,left=25truemm,right=25truemm]{geometry}
\usepackage{mathtools}
\mathtoolsset{showonlyrefs=true}
\usepackage{tikz}
\usetikzlibrary{cd}

\makeatletter
\@addtoreset{equation}{section}

\makeatother

\newcommand{\Z}{\mathbb{Z}}
\newcommand{\N}{\mathbb{N}}
\newcommand{\Q}{\mathbb{Q}}
\newcommand{\R}{\mathbb{R}}
\newcommand{\C}{\mathbb{C}}
\newcommand{\F}{\mathbb{F}}

\DeclareMathOperator{\Imag}{Im}

\DeclareMathOperator{\ord}{ord}

\DeclareMathOperator{\GL}{GL}

\DeclareMathOperator{\Frob}{Frob}

\DeclareMathOperator{\sgn}{sgn}

\DeclareMathOperator{\gal}{Gal}
\DeclareMathOperator{\Log}{Log}
\DeclareMathOperator{\prj}{pr}

\usepackage[OT2,T1]{fontenc}
\DeclareSymbolFont{cyrletters}{OT2}{wncyr}{m}{n}
\DeclareMathSymbol{\Sha}{\mathalpha}{cyrletters}{"58}

\usepackage[
  colorlinks=false,
  pdfborder={0 0 1.8},
  linkbordercolor={1 0 0},
  citebordercolor={0 0.55 0},
  urlbordercolor={0 0 1},
  filebordercolor={0 0.55 0.55},
  linktocpage=true
]{hyperref}
\renewcommand{\C}{\mathbb{C}}

\let\oldenumerate\enumerate
\renewcommand{\enumerate}{
   \oldenumerate
   \setlength{\itemsep}{1pt}
   \setlength{\parskip}{0pt}
   \setlength{\parsep}{0pt}
}
\let\olditemize\itemize
\renewcommand{\itemize}{
   \olditemize
   \setlength{\itemsep}{1pt}
   \setlength{\parskip}{0pt}
   \setlength{\parsep}{0pt}
}

\theoremstyle{plain}
\newtheorem{thm}{Theorem}[section]
\newtheorem{lem}[thm]{Lemma}

\newtheorem{prop}[thm]{Proposition}
\newtheorem{cor}[thm]{Corollary}

\theoremstyle{definition}

\newtheorem{rem}[thm]{Remark}

\theoremstyle{definition}
\newtheorem{exa}{Example}[section]
\newtheorem{conv}{Convention}[section]

\usepackage{booktabs}
\usepackage{multirow}

\title
[sharp/flat $2$-adic Iwasawa invariants]
{Iwasawa invariants of sharp/flat 2-adic $L$-functions\\ for quadratic twists of elliptic curves}
\author{Taiga Adachi}
\address{Joint Graduate School of Mathematics for Innovation, Kyushu University,
Motooka 744, Nishi-ku, Fukuoka 819-0395, Japan}
\email{t.adachi1729@gmail.com}

 \keywords{Iwasawa theory at $p=2$, Elliptic curve, Supersingular, Quadratic twists, 2-adic $L$-function, Iwasawa invariants, Distribution}
 \subjclass[2020]{11R23 (Primary), 11F67, 11G05, 11G40}

\date{\today}
\begin{document}

\maketitle

%%%%%%%%%%%%%%%%%%%%%
\begin{abstract}
The aim of this paper is to study the variation under quadratic twists of the analytic Iwasawa invariants of Sprung's sharp/flat 2-adic $L$-functions for elliptic curves over $\Q$ with good supersingular reduction at $2$. Under the hypothesis that the $\mu$-invariant vanishes, we obtain an explicit formula for the sharp/flat $\lambda$-invariants. This formula gives a supersingular analogue of Matsuno's formula in the good ordinary case.  As an application, we show that the sharp/flat $\lambda$-invariants can be made arbitrarily large even among quadratic twists by single primes. Moreover, using the method of Hatley--Ray, we obtain an asymptotic lower bound for the number of quadratic twists with a prescribed sharp/flat 2-adic Iwasawa $\lambda$-invariant.
\end{abstract}
%%%%%%%%%%%%%%%%%%%%%

%%%%%%%%%%%%%%%%%%%%%%
\section{Introduction}\label{sec:intro}
%%%%%%%%%%%%%%%%%%%%%%
Quadratic twists of elliptic curves provide interesting variations of arithmetic invariants, including those appearing in the Birch--Swinnerton-Dyer conjecture. In this paper, we study the variation of analytic 2-adic Iwasawa invariants of elliptic curves over $\Q$ under quadratic twists. In \cite{Mat08}, Matsuno obtained a formula for the difference of analytic Iwasawa invariants of the Mazur--Swinnerton-Dyer $2$-adic $L$-functions for elliptic curves over $\Q$ with good ordinary reduction at 2 under quadratic twists. For elliptic curves with good supersingular reduction at 2, the Amice--V\'{e}lu--Vi\v{s}ik 2-adic $L$-function is not in $\Z_2\llbracket T\rrbracket\otimes_{\Z_2}\Q_2$; hence its Iwasawa invariants cannot be defined directly. In \cite{Pol03}, Pollack constructed the \emph{plus/minus} $p$-adic $L$-functions as elements of $\Z_p\llbracket T\rrbracket\otimes_{\Z_p}\Q_p$ by suitably decomposing the $p$-adic $L$-function when the Frobenius trace $a_p$ at $p$ is zero. When $p=2$ or $3$, the Frobenius trace need not vanish. Sprung constructed the \emph{sharp/flat} $p$-adic $L$-functions in \cite{Spr12} and \cite{Spr17} by generalizing Pollack's method to include the case $a_p\neq0$. In this paper, we study the variation of Iwasawa invariants of the sharp/flat 2-adic $L$-functions under quadratic twists, and give its applications.

Let $E$ be an elliptic curve defined over $\Q$ with good supersingular reduction at 2. Let $\omega:(\Z/4\Z)^{\times}\to \{\pm1\}\subset \C_2^{\times}$ be the nontrivial character. For $i\in\{0,1\}$, denote by $L_2^{\sharp/\flat}(E,\omega^i,T)\in\Z_2\llbracket T\rrbracket$ the $\omega^i$-isotypical component of the sharp/flat 2-adic $L$-function. Let $\mu^{\sharp/\flat}_2(E,\omega^i):=\mu(L_2^{\sharp/\flat}(E,\omega^i,T))$ and $\lambda^{\sharp/\flat}_2(E,\omega^i):=\lambda(L_2^{\sharp/\flat}(E,\omega^i,T))$ denote the $2$-adic Iwasawa $\mu$- and $\lambda$-invariants, respectively. For a square-free integer $D$, let $E^D$ be the quadratic twist of $E$ associated to $\Q(\sqrt{D})/\Q$. To study the variation of Iwasawa invariants under quadratic twists, a key ingredient is the following congruence relation.
\begin{thm}[Theorem \ref{Theorem:congruence_relation}]\label{Main_Thm0}
Let $E$ be an elliptic curve defined over $\Q$ with good supersingular reduction at $2$ and square-free conductor, and let $D>0$ be a square-free integer such that $D\equiv 1\bmod 4$. Then there exist $u_{D,i}(T)\in \F_2\llbracket T\rrbracket^{\times}$ and $\Delta_E(D)\in\Z_{\geq0}$ such that, for $*\in\{\sharp,\flat\}$ and a character $\omega^i:(\Z/4\Z)^{\times}\to\C_2^{\times}$,
\begin{align}
\overline{L^*_2(E^D,\omega^i,T)}\equiv u_{D,i}(T)T^{\Delta_E(D)}\overline{L^*_2(E,\omega^i,T)}
\end{align}
in $\F_2\llbracket T\rrbracket$. 
\end{thm}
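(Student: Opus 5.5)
We would prove this along the lines of Matsuno's argument in the ordinary case, working with modular symbols at the level of the finite-layer Mazur--Tate elements and passing to the sharp/flat decomposition only at the end.

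\textbf{Step 1: congruence of modular symbols.} Since $D\equiv 1\bmod 4$ is square-free, the character $\chi_D$ is even with conductor $D$ prime to $2$. The modular symbols of $E^D$ are twisted sums of those of $E$:
\begin{align}
\left[\frac{a}{m}\right]_{E^D}^{+}\doteq \sum_{b \bmod D}\chi_D(b)\left[\frac{aD+bm}{mD}\right]^{+}_E
\end{align}
up to a unit and a Gauss-sum factor. Reducing modulo $2$, $\chi_D\equiv 1$, so the character sum becomes an untwisted sum over $b$. Using the Hecke relations at each prime $\ell\mid D$, this sum collapses to a product of Euler-like factors. For $\ell\nmid N$ one obtains, mod $2$, a multiplication of the Mazur--Tate element $\theta_n(E)$ by $(a_\ell-\sigma_\ell-\sigma_\ell^{-1})$. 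For $\ell\mid N$, with $N$ square-free so $E$ is multiplicative at $\ell$, the factor is $(a_\ell-\sigma_\ell^{\pm1})$ or a unit; the square-free conductor hypothesis is needed exactly here. Here $\sigma_\ell\in\gal(\Q(\mu_{2^{n+2}})/\Q)$. The result is an identity
\begin{align}
\overline{\theta_n(E^D)}\equiv \prod_{\ell\mid D}\overline{P_\ell(\sigma_\ell)}\;\overline{\theta_n(E)}
\end{align}
in $\F_2[\gal(\Q(\mu_{2^{n+2}})/\Q)]$, up to units. We also need that the Néron periods of $E$ and $E^D$ match $2$-adically up to units; since $D$ is odd and $D\equiv1\bmod4$, this should follow because $\Q(\sqrt D)$ is unramified at $2$.

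\textbf{Step 2: factors as power series.} Under $\sigma_\ell\mapsto \langle\ell\rangle$, $\gamma\mapsto 1+T$, projected to the $\omega^i$ component, each factor $\overline{P_\ell}$ becomes an element of $\F_2\llbracket T\rrbracket$. We write it as a unit times $T^{e_\ell}$ and define $\Delta_E(D)=\sum_{\ell\mid D}e_\ell$. The reduction of $\omega$ is trivial, so the factor is independent of $i$ up to units; we then set $u_{D,i}$ to be the product of the unit parts. We must check that no factor vanishes identically mod $2$. For example, with $a_\ell$ even and $\ell\equiv 1$, the element $1+(1+T)^{k}+(1+T)^{-k}$ is a unit, while with $a_\ell$ odd it equals $(1+T)^{-k}(1+(1+T)^k)^2$, which is nonzero. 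Which cases occur depends on $a_\ell\bmod 2$ and the $2$-adic valuation of $\log_2\ell$.

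\textbf{Step 3: passage to sharp/flat.} Sprung's construction expresses $(\theta_n)$ through a $2\times2$ matrix $\mathcal{C}_n$ built from $a_2$ and cyclotomic polynomials:
\begin{align}
(\theta_n,\theta_{n-1})\approx (L^\sharp,L^\flat)\,\mathcal{C}_1\cdots\mathcal{C}_n .
\end{align}
The matrix depends only on $a_2(E)=a_2(E^D)$; this equality holds because $\chi_D(2)=\pm1$, and we must handle the sign $\chi_D(2)a_2$, which is harmless since $a_2\in\{0,\pm1,\pm2\}$. The relation in Step 1 is multiplication by an element of the Iwasawa algebra, which commutes with these matrices. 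From the uniqueness of the decomposition (the pair $(L^\sharp,L^\flat)$ is determined by the sequence $\theta_n$), we would deduce
\begin{align}
L^*(E^D)=\Phi_D\,L^*(E)
\end{align}
exactly, for a fixed $\Phi_D$, and then reduce modulo $2$.

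\textbf{Main obstacle.} The hardest step is Step 3: uniqueness and integrality modulo $2$. The decomposition is only determined over $\Q_2\otimes\Lambda$, and mod-$2$ congruences between the $\theta_n$ need not lift to congruences of $L^*$ through matrices with denominators. We would avoid reduction until an exact identity is reached. That is, we would prove the Step 1 relation as an exact equality of $\theta$-elements, with $\chi_D(b)$ retained, after replacing the factor with a genuine group-ring element $\Phi_{D,n}$ compatible in $n$. Uniqueness then gives an exact relation $L^*(E^D)=\Phi_D L^*(E)$ with $\Phi_D\equiv$ the Step 2 product mod $2$.

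If an exact equality is out of reach, the fallback is to use the limit formula recovering $L^\sharp$ and $L^\flat$ from $\theta_n$ via the inverse of the matrix product. We would track the $2$-adic valuations of its denominators and show they are uniform enough that a mod-$2^k$ congruence for large $k$ implies the mod-$2$ statement.
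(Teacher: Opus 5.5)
The gap is in Step 3, and neither of your two routes through it works.

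\textbf{The exact identity is false.} An identity $L^*(E^D)=\Phi_D\,L^*(E)$ with $\Phi_D\in\Lambda$ fails in general. Suppose $L(E,1)=0\neq L(E^D,1)$. Then $L_2^{\sharp}(E,\omega^0,0)=L_2^{\flat}(E,\omega^0,0)=0$, because the Amice--V\'elu--Vi\v{s}ik values at the trivial character vanish and $\Log$ is invertible at $T=0$. On the other hand $L_2^*(E^D,\omega^0,0)\neq0$ for some $*$.

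Correspondingly, $\theta_n(E^D)$ is not an exact group-ring multiple of $\theta_n(E)$. Birch's lemma expresses it exactly only as a $\chi$-twisted sum over $(\Z/2^{n+2}D\Z)^{\times}$. Replacing $\chi(c)$ by $1$ is genuinely a mod-$2$ operation. It also needs the pairing $c\leftrightarrow -c$ to absorb the $\frac12$ in $[r]_f^{\pm}\in\frac12\Z_2$; reducing the symbols naively mod $2$ gives nothing.

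\textbf{The sign of $a_2$ matters exactly.} For $D\equiv5\bmod 8$ and $a_2=\pm2$ one has $a_2(E^D)=-a_2(E)$. So $C_j(f\otimes\chi)\neq C_j(f)$ and $\Log(f\otimes\chi)\neq\Log(f)$. The sign is harmless only modulo $2$, so exact uniqueness of the decomposition cannot compare the two sides.

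\textbf{The fallback also fails.} Since $\chi(c)-1\in\{0,-2\}$, no mod-$2^k$ congruence with $k\geq2$ is available to feed into it.

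\textbf{Denominators are not the real obstruction.} Sprung's finite-level relation combined with $R_n(\alpha,\beta)\begin{pmatrix}-1&-1\\ \beta&\alpha\end{pmatrix}^{-1}C^{n+3}=\begin{pmatrix}-a_2&-1\\ 1&0\end{pmatrix}$ gives the integral identity $\vec L^{\omega^i}_{2,n,M}C_1\cdots C_n=(\theta_{n,M},\nu_{n-1/n}\theta_{n-1,M})\begin{pmatrix}-a_2&-1\\ 1&0\end{pmatrix}$. What actually has to be overcome is that $C_1\cdots C_n$ is not invertible mod $2$, and that $\vec L^{\omega^i}_{2,n,M}$ is only determined modulo $\mathcal M_n(f)$.

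\textbf{The missing idea.} Pass from the finite-layer mod-$2$ congruence to $L^{\sharp/\flat}$ directly modulo $2$. Use $C_j\equiv\begin{pmatrix}0&1\\ T^{2^{j-1}}&0\end{pmatrix}\bmod 2$, which holds because $a_2$ is even and $\Phi_{2^j}(1+T)\equiv T^{2^{j-1}}$. Hence $C_1\cdots C_n$ is, mod $2$, diagonal or antidiagonal with entries $T^{q_n^{\sharp}}$ and $T^{q_n^{\flat}}$, where $2^n-q_n^{\sharp/\flat}\geq(2^n+1)/3\to\infty$.

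Next, $\mathcal M_n(f),\mathcal M_n(f\otimes\chi)\subset2\Lambda_n^{\oplus2}$ for $n\gg0$, so the finite-level vectors agree mod $2$ with $\prj_n$ of the limits. Let $U_\chi$ be the period/Gauss-sum unit, $P=\prod_{\ell\mid D}P_\ell$ your Euler factors, and $P_n$ the image of $P$ in $\Lambda_n$. Set $F^*:=L_2^*(E^D,\omega^i,T)-U_\chi P\,L_2^*(E,\omega^i,T)$. If $F^*$ had finite $T$-adic order $d$ mod $2$, then for large $n$ one component of $(\vec L_n(E^D)-U_{n,\chi}P_n\vec L_n(E))C_1\cdots C_n$ would have $T$-order $d+q_n^*<2^n$ mod $2$. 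This contradicts the finite-layer congruence.

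With this step in place, folding the Euler factors into Step 1 is fine, since they are scalars commuting with the $C_j$. The paper instead proves $L^*_{2,M}=h_\ell L^*_{2,M/\ell}$ exactly, via the Amice--V\'elu--Vi\v{s}ik functions and $\det\Log(f)\neq0$, and keeps the twist congruence separate.

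\textbf{Smaller points.}
\begin{itemize}
\item Your parity labels in Step 2 are swapped: $a_\ell$ odd gives the unit $1+(1+T)^k+(1+T)^{-k}$, and $a_\ell$ even gives $(1+T)^{-k}(1+(1+T)^k)^2$.
\item Supersingularity forces $a_2\in\{0,\pm2\}$, not $\{0,\pm1,\pm2\}$.
\item The square-free conductor is really needed to identify the coefficient twist $f_\chi$ with the newform of $E^D$ (Atkin--Li). Your use of Birch's lemma silently assumes this; it is not where the Euler factors need it.
\item The period comparison needs more than unramifiedness of $\Q(\sqrt D)$ at $2$. It requires Pal's $2$-adic unit $\tilde u$ with $\tilde u\psi_D^*\omega_E=\sqrt D\,\omega_{E^D}$, the condition $D>0$ (so $\kappa$ and the $\pm$-eigenspaces match), and Abbes--Ullmo plus odd isogeny degree to pass between N\'eron and Shimura periods.
\end{itemize}
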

\begin{rem}
The condition $D\equiv1\bmod 4$ in Theorem \ref{Main_Thm0} implies that $E^D$ also has good supersingular reduction at $2$. Moreover, the condition $D>0$ ensures that the associated quadratic character has the sign $+1$, and hence the signs of N\'{e}ron periods are not interchanged under quadratic twists.
\end{rem}
The term $\Delta_E(D)$ in Theorem \ref{Main_Thm0} gives the difference between the $\lambda$-invariants under the hypothesis $\mu_2^*(E,\omega^i)=0$ for each $*\in\{\sharp,\flat\}$. Let $\ord_2$ denote the $2$-adic valuation normalized by $\ord_2(2)=1$.

\begin{thm}[Theorem \ref{Thm:Matsuno_formula}]\label{Main_Thm1}
For each rational prime $\ell\neq2$, set
\begin{align}
n_{\ell}:=\ord_2\left(\frac{\ell^2-1}{8}\right).
\end{align}
Let $E$ be an elliptic curve defined over $\Q$ with good supersingular reduction at $2$ and let $D>0$ be a square-free integer such that $D\equiv 1\bmod 4$. Fix $*\in\{\sharp,\flat\}$ and a character $\omega^i:(\Z/4\Z)^{\times}\to\C_2^{\times}$. Assume that the conductor $N_E$ of $E$ is square-free and $\mu^*_2(E,\omega^i)=0$. Then we have $\mu^*_2(E^D,\omega^i)=0$ and 
\begin{align}\label{Matsuno-type_formula_sharp_flat}
\lambda_2^*(E^D,\omega^i)=\lambda_2^*(E,\omega^i)+\sum_{\substack{\ell\mid D\\ \ell\mid N_E}}2^{n_{\ell}}+\sum_{\substack{\ell\mid D\\ \ell\nmid N_E,2\mid \sharp\tilde{E}_{\ell}(\F_{\ell})}} 2^{n_{\ell}+1}.
\end{align}
\end{thm}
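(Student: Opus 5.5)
Theorem \ref{Main_Thm1} will be deduced from the congruence of Theorem \ref{Main_Thm0}, with the work going into identifying $\Delta_E(D)$ explicitly.

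\textbf{Step 1: $\mu$ and the reduction to $\Delta_E(D)$.} Assume $\mu^*_2(E,\omega^i)=0$. Then $\overline{L^*_2(E,\omega^i,T)}$ is a nonzero element of $\F_2\llbracket T\rrbracket$, and its $T$-adic valuation equals $\lambda^*_2(E,\omega^i)$. Since $u_{D,i}(T)$ is a unit, the right-hand side of the congruence in Theorem \ref{Main_Thm0} is nonzero. Hence $\mu^*_2(E^D,\omega^i)=0$. Taking $T$-adic valuations of both sides gives
\[
\lambda^*_2(E^D,\omega^i)=\lambda^*_2(E,\omega^i)+\Delta_E(D).
\]
So the theorem reduces to the identity
\[
\Delta_E(D)=\sum_{\ell\mid D,\ \ell\mid N_E}2^{n_\ell}+\sum_{\ell\mid D,\ \ell\nmid N_E,\ 2\mid\sharp\tilde E_\ell(\F_\ell)}2^{n_\ell+1}.
\]

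\textbf{Step 2: where $\Delta_E(D)$ comes from.} I expect the proof of Theorem \ref{Main_Thm0} to produce $T^{\Delta_E(D)}$, up to a unit, as a product over primes $\ell\mid D$ of mod-$2$ Euler-factor corrections. The mechanism is that $D\equiv1\bmod 4$ and $D>0$ make the twisting character $\chi_D$ trivial mod $2$ (it is $\pm1$-valued). The modular symbols of $E^D$ are then congruent mod $2$ to those of $E$, but with the Euler factors at each $\ell\mid D$ removed, since $\chi_D(\ell)=0$. So I will write
\[
T^{\Delta_E(D)}\sim\prod_{\ell\mid D}\overline{P_\ell(\gamma_\ell)}.
\]
Here $P_\ell(X)=1-a_\ell X+\ell X^2$ if $\ell\nmid N_E$, and $P_\ell(X)=1-a_\ell X$ with $a_\ell=\pm1$ if $\ell\| N_E$. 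The element $\gamma_\ell$ is the image in $\Lambda=\Z_2\llbracket\Gamma\rrbracket$, $\Gamma\cong 1+4\Z_2$, of $\ell^{-1}$ (suitably normalized into $1+4\Z_2$ and made compatible with the $\omega^i$-component), with an extra sign or Frobenius twist according to $i$. I take this shape to be what the construction in Theorem \ref{Main_Thm0} provides. Then $\Delta_E(D)=\sum_{\ell\mid D}\ord_T\overline{P_\ell(\gamma_\ell)}$.

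\textbf{Step 3: local computation.} Write $\gamma_\ell=(1+T)^{s_\ell}$, where $s_\ell\in\Z_2$ has $\ord_2(s_\ell)=\ord_2(\log_2\langle\ell\rangle)-2$. This is exactly where $n_\ell=\ord_2((\ell^2-1)/8)$ should come in: $\langle\ell\rangle$ lies in $1+4\Z_2$, and for $\ell^2$ one has $\ord_2(\ell^2-1)-3=n_\ell$. In $\F_2\llbracket T\rrbracket$, $(1+T)^{2^n u}-1$ has valuation $2^n$ when $u$ is a unit. Multiplicative case: $\overline{1-a_\ell\gamma_\ell}=1+\gamma_\ell$ mod $2$, of valuation $2^{\ord_2 s_\ell}$, which should equal $2^{n_\ell}$ for $\ell\equiv\pm1\bmod 8$ and the other residue classes alike. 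Good case: $P_\ell(\gamma_\ell)$ is a unit mod $(2,T)$ iff $1-a_\ell+\ell\not\equiv0\bmod2$, i.e.\ iff $\sharp\tilde E_\ell(\F_\ell)$ is odd, giving contribution $0$ then. When $a_\ell$ is even, $\overline{P_\ell(\gamma_\ell)}=1+\gamma_\ell^2$, of valuation $2\cdot2^{n_\ell}=2^{n_\ell+1}$.

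\textbf{Main obstacle.} The hard part is normalizing $\gamma_\ell$ correctly: whether it is $\ell$ or $\ell^{-1}$, how the projection $\Z_2^\times\to1+4\Z_2$ and the $\omega^i$-twist act, and checking that the exponent is uniformly $n_\ell$ for both $\ell\equiv\pm1$ and $\pm3\bmod 8$. I would first handle this by computing $\ord_2(\log_2\ell)$ directly, using $\ord_2\log_2(\ell^2)=\ord_2(\ell^2-1)$. I would also verify that the $\sharp/\flat$ decomposition, via Sprung's matrices with entries in $\Z_2\llbracket T\rrbracket$, commutes with multiplication by these Euler factors. That is what makes the same $\Delta_E(D)$ valid for both $*$.
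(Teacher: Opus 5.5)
Your argument follows the paper's route. The congruence gives $\mu^*_2(E^D,\omega^i)=0$ and $\lambda^*_2(E^D,\omega^i)=\lambda^*_2(E,\omega^i)+\Delta_E(D)$, and your Step 3 is exactly Lemma \ref{Lem:Iwasawa_Euler_factor}: with $X=1+T$, one has $X^th_\ell\equiv 1+a_\ell X^t+X^{2t}\bmod 2$ for $\ell\nmid N_E$, $h_\ell\equiv 1+X^t$ for $\ell\mid N_E$, and $\lambda_2(X^t-1)=2^{\ord_2 t(\ell)}=2^{n_\ell}$.

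The shape you only assume in Step 2 is what the paper proves. Theorem \ref{Theorem:congruence_relation} is the congruence $L^*_{2,1}(f\otimes\chi,\omega^i,T)\equiv U_\chi L^*_{2,D}(f,\omega^i,T)\bmod 2$ with the level-$D$ function. Proposition \ref{Euler_factor_difference} (the Hecke relation on modular symbols plus $\det\Log(f)\neq0$, i.e.\ your ``commutes with Euler factors'' check) gives $L^*_{2,D}=\prod_{\ell\mid D}h_\ell\cdot L^*_{2,1}$, and $\Delta_E(D)$ is defined as $\sum_{\ell\mid D}\lambda_2(h_\ell)$. Here $h_\ell=a_\ell-\omega^i(\ell)(1+T)^{t(\ell)}-\varepsilon(\ell)\omega^{-i}(\ell)(1+T)^{-t(\ell)}$ carries no factor $\ell$ on the last term, unlike your $P_\ell$. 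The two agree mod $2$ up to a unit, which is all you use.
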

Theorem \ref{Main_Thm1} is a supersingular analogue of Matsuno's formulas \cite[Theorem 5.1]{Mat08} for the $2^{\infty}$-Selmer group and the Mazur--Swinnerton-Dyer $2$-adic $L$-function in the ordinary case. Such a formula may be viewed as a Kida-type formula for $2$-adic $L$-functions. Related formulas for odd supersingular primes were obtained by Pollack--Weston \cite{PW06} and Pratap--Ray \cite{PR25+}. There are also many related works on the Selmer-group side. See \cite{HM99}, \cite{Mat08}, \cite{Lim22}, \cite{HL19}, \cite{FM25}, and \cite{Kat24+}, for example.

As an application of Theorem \ref{Main_Thm1}, we can make the sharp/flat analytic Iwasawa $\lambda$-invariants arbitrarily large by quadratic twists as follows.
\begin{cor}[Corollary \ref{Cor:lambda_arbitary_large}]\label{Cor:large_lambda_twist}
Let $E$ be an elliptic curve defined over $\Q$ with good supersingular reduction at $2$ and square-free conductor. Fix $*\in\{\sharp,\flat\}$ and a character $\omega^i:(\Z/4\Z)^{\times}\to\C_2^{\times}$. Assume that $\mu_2^*(E,\omega^i)=0$. Then the set
\begin{align}
\{\lambda_2^*(E^{\ell},\omega^i)\ |\ \ell\text{ is a rational prime and }\ell\equiv1\bmod 4\}
\end{align}
is infinite. More precisely, for each nonnegative integer $k$, the density of the set
\begin{align}
\{\ell\text{ is a rational prime and }\ell\equiv 1\bmod 4\ |\ \lambda_2^*(E^{\ell},\omega^i)=\lambda_2^*(E,\omega^i)+2^{k+1}\}
\end{align}
is $\frac{1}{3\cdot 2^{k+1}}$.
\end{cor}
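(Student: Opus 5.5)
The plan is to specialize Theorem~\ref{Main_Thm1} to $D=\ell$ with $\ell\equiv 1\bmod 4$ prime, and then reduce the corollary to a Chebotarev density computation. Since $N_E$ has only finitely many prime divisors, the primes $\ell\mid N_E$ form a set of density zero and can be discarded. For every other prime $\ell\equiv1\bmod4$, the hypotheses of Theorem~\ref{Main_Thm1} hold with $D=\ell$ (in particular $\mu_2^*(E,\omega^i)=0$ is assumed). The theorem then gives:
\begin{align}
\lambda_2^*(E^{\ell},\omega^i)=
\begin{cases}
\lambda_2^*(E,\omega^i)+2^{n_\ell+1}, & 2\mid \sharp\tilde E_\ell(\F_\ell),\\
\lambda_2^*(E,\omega^i), & \text{otherwise.}
\end{cases}
\end{align}
Next I compute $n_\ell$ for $\ell\equiv1\bmod4$. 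Here $\ell+1\equiv2\bmod4$, so
\begin{align}
n_\ell=\ord_2(\ell-1)+1-3=\ord_2(\ell-1)-2 .
\end{align}
Hence $n_\ell=k$ if and only if $\ell\equiv 1+2^{k+2}\bmod 2^{k+3}$. By Dirichlet, this congruence class has density $1/2^{k+2}$ among all primes, which I take to be the normalization intended in the statement. Since the values $2^{k+1}$ for $k\geq0$ are pairwise distinct and unbounded, infiniteness of the set of $\lambda$-values follows once each of these sets has positive density.

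It remains to show that, among primes $\ell\equiv1+2^{k+2}\bmod2^{k+3}$, the condition $2\mid\sharp\tilde E_\ell(\F_\ell)$ has relative density $2/3$. Then the total density is $\frac{2}{3}\cdot\frac1{2^{k+2}}=\frac{1}{3\cdot 2^{k+1}}$, as claimed. For $\ell\nmid 2N_E$, we have $2\mid\sharp\tilde E_\ell(\F_\ell)$ if and only if $\Frob_\ell$ fixes a nonzero point of $E[2]$. Viewing $\Frob_\ell$ in $\gal(\Q(E[2])/\Q)\subset\GL_2(\F_2)\cong S_3$, this means $\Frob_\ell$ is not of order $3$. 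So I would apply Chebotarev to the compositum $\Q(E[2])\Q(\zeta_{2^{k+3}})$. If $\gal(\Q(E[2])/\Q)\cong S_3$ and $\Q(E[2])\cap\Q(\zeta_{2^{k+3}})=\Q$, the Frobenius classes are equidistributed in $S_3\times(\Z/2^{k+3}\Z)^\times$. The proportion of non-order-$3$ elements is then $4/6=2/3$, independently of the congruence class.

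The main obstacle is establishing that the image is $S_3$ and the linear disjointness, using supersingularity at $2$. For the first point, I use that good supersingular reduction at $2$ makes $E[2]$ a connected finite flat group scheme over $\Z_2$ with connected dual. By Serre/Raynaud (with $e=1$), inertia at $2$ then acts on $E[2]$ through the fundamental character of level $2$, which has order $3$ and acts irreducibly. Hence the image contains a $3$-cycle, so it is $C_3$ or $S_3$, and inertia at $2$ in $\Q(E[2])$ has order exactly $3$. Consequently the quadratic subfield $\Q(\sqrt{\Delta_E})$ is unramified at $2$, so it cannot lie in $\Q(\zeta_{2^{k+3}})$, whose only ramified prime is $2$. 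Also, a cubic field cannot meet a $2$-power cyclotomic field nontrivially. This gives the disjointness. The remaining point, excluding the $C_3$ case (i.e.\ $\Delta_E$ a square), is where I expect the real work. I would first try to rule it out using the square-free conductor hypothesis together with the local structure at $2$, or else check how the paper's density normalization handles it. In the $C_3$ case the same computation would give $1/3$ in place of $2/3$.
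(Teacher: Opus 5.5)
Your reduction is the same as the paper's: you specialize the Matsuno-type formula to $D=\ell$, compute $n_\ell=\ord_2(\ell-1)-2$, and apply Chebotarev to $\Q(E[2])\Q(\zeta_{2^{k+3}})$. Your linear-disjointness argument is exactly that of Lemma \ref{Lem:linear_disjoint}. You also correctly discard the finitely many $\ell\mid N_E$, which the paper passes over silently.

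There is, however, a genuine gap at the step you flag yourself: you never show $\gal(\Q(E[2])/\Q)\cong\mathfrak{S}_3$. This is not a normalization issue. If the image were $C_3$, the same computation would give density $\frac{1}{3\cdot 2^{k+2}}$, and the stated constant would be wrong. So ruling this case out is essential. The square-free conductor hypothesis plays no role in it.

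The missing idea is purely local at $2$. It is what the paper uses in Proposition \ref{Prop:mod_2_surjective}: by Serre's Proposition 12, the image of $G_{\Q_2}$ is the full normalizer of the nonsplit Cartan $\F_4^\times$, which is all of $\GL_2(\F_2)$.

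Concretely, you already know $\bar\rho_{E,2}(I_2)$ has order $3$, so $\Q_2(E[2])/\Q_2$ is tamely ramified. Suppose $\bar\rho_{E,2}(G_{\Q_2})$ also had order $3$. Then $G_{\Q_2}\to C_3$ would factor through the tame quotient, where a Frobenius lift $\phi$ and a topological generator $\tau$ of tame inertia satisfy $\phi\tau\phi^{-1}=\tau^{2}$. Since $C_3$ is abelian, the image $g$ of $\tau$ would satisfy $g=g^2$, so $g=1$. This contradicts the fact that inertia maps onto $C_3$. Equivalently, a totally tamely ramified Galois cubic extension of $\Q_2$ would force $\mu_3\subset\F_2$.

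Hence $\bar\rho_{E,2}(G_{\Q_2})$ has order $6$. The local image is therefore already $\mathfrak{S}_3$, and $\bar\rho_{E,2}$ is surjective. Equivalently, $\Q_2(\sqrt{\Delta_E})$ is the unramified quadratic extension, so $\Delta_E$ is not a square. With this step inserted, your argument is complete and coincides with the paper's.
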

Corollary \ref{Cor:large_lambda_twist} may be viewed as a refined supersingular analogue of \cite[Proposition 6.1]{Mat08}. Matsuno obtained arbitrarily large $\lambda$-invariants by allowing twists by products of primes. We show that arbitrarily large $\lambda$-invariants already occur among quadratic twists by a single prime, and give an explicit density estimate.

A different type of distributional estimate was recently studied by Hatley--Ray \cite{HR24+} for elliptic curves with good ordinary reduction at $2$, using Matsuno's formula. Applying their framework to the sharp/flat 2-adic $L$-functions, we obtain an asymptotic lower bound for the number of quadratic twists with a prescribed sharp/flat Iwasawa $\lambda$-invariant. 
In the following theorem, for two positive-valued functions $F(X),G(X)$ defined for all sufficiently large real number $X$, we write $F(X)\gg G(X)$ to mean that there exist constants $c>0$ and $X_0\in \R$ satisfying $F(X)\geq cG(X)$ for all $X\geq X_0$.
\begin{thm}[Theorem \ref{Thm:distribution_lambda}]
Let $E$ be an elliptic curve defined over $\Q$ with good supersingular reduction at $2$ and square-free conductor. Fix $*\in\{\sharp,\flat\}$ and a character $\omega^i:(\Z/4\Z)^{\times}\to\C_2^{\times}$. Let $N^*$ be a nonnegative integer such that $N^*\geq \lambda_2^*(E,\omega^i)$ and $\lambda_2^*(E,\omega^i)\equiv N^*\bmod 2$. Assume that $\mu^*_2(E,\omega^i)=0$. Let
\begin{align}
m_{E,N^*}^*(X):=\sharp\{D\leq X\ |\ D\in\Z_{>0}\text{ is square-free, }D\equiv1\bmod 4,\text{ and }\lambda_2^*(E^D,\omega^i)=N^*\}.
\end{align}
Then we have
\begin{align}
m_{E,N^*}^*(X)\gg \frac{X}{(\log X)^{\frac{2}{3}}}.
\end{align}
\end{thm}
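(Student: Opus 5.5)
We deduce the bound from Theorem \ref{Main_Thm1} by a sieve count in the style of Hatley--Ray. Write $\lambda_0:=\lambda_2^*(E,\omega^i)$ and $N^*-\lambda_0=2m$ with $m\geq0$. By Theorem \ref{Main_Thm1}, for square-free $D>0$ with $D\equiv1\bmod4$,
\begin{align}
\lambda_2^*(E^D,\omega^i)=\lambda_0+\sum_{\ell\mid D}c_\ell ,
\end{align}
where $c_\ell=2^{n_\ell}$ if $\ell\mid N_E$, $c_\ell=2^{n_\ell+1}$ if $\ell\nmid N_E$ and $2\mid\sharp\tilde E_\ell(\F_\ell)$, and $c_\ell=0$ otherwise. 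Note that $n_\ell\geq0$ for every odd $\ell$, so every $c_\ell$ is even or zero; this is why the parity condition on $N^*$ appears.

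The plan is to find a fixed square-free "core" $D_0\equiv1\bmod4$ that supplies the increment $2m$. We then multiply it by square-free $D_1$ built from primes with $c_\ell=0$.

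\textbf{Step 1 (core).} If $m=0$, take $D_0=1$. If $m\geq1$, take $D_0$ to be a product of $m$ distinct primes $\ell\equiv 3\bmod 8$ (or $5\bmod 8$) with $\ell\nmid N_E$ and $2\mid\sharp\tilde E_\ell(\F_\ell)$, so that $n_\ell=0$ and $c_\ell=2$. Since we also need $D_0\equiv1\bmod4$, we adjust the choices using primes $\ell\equiv5\bmod8$ (which are $1\bmod 4$) wherever possible. Existence of such primes follows from Chebotarev applied to $\Q(E[2],\sqrt{-1},\sqrt2)$. The condition $2\mid\sharp\tilde E_\ell(\F_\ell)$ says Frobenius fixes a nonzero point of $E[2]$, and this condition is compatible with any prescribed class mod $8$ provided the degree-$8$ extension $\Q(\zeta_8)$ is reasonably independent of $\Q(E[2])$. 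If it is not, we instead use a prime $\ell\mid N_E$ or primes with $n_\ell=0$ in a suitable combination. Handling these independence issues is a case analysis that I expect to be routine but fiddly.

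\textbf{Step 2 (free primes).} Let $\mathcal P$ be the set of primes $\ell\equiv1\bmod4$ with $\ell\nmid 2N_ED_0$ and $\sharp\tilde E_\ell(\F_\ell)$ odd. These are exactly the primes whose Frobenius acts on $E[2]$ without fixed nonzero vectors. Assume $E[2]$ is irreducible with full Galois group $S_3$; this is the generic case, and the other cases are treated similarly with different constants. Then these primes correspond to $3$-cycles, of density $1/3$. Imposing $\ell\equiv1\bmod4$ gives relative density $1/6$ if $\Q(\sqrt{-1})\not\subset\Q(E[2])$. For any square-free $D_1$ composed of primes in $\mathcal P$, we have $D=D_0D_1\equiv1\bmod4$, $\mu=0$ by Theorem \ref{Main_Thm1}, and $\lambda_2^*(E^D,\omega^i)=N^*$.

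\textbf{Step 3 (counting).} By the Wirsing/Landau--Selberg--Delange theorem, the number of integers up to $X/D_0$ composed only of primes from a set of Dirichlet density $\delta$ is $\gg X/(\log X)^{1-\delta}$. This gives $m_{E,N^*}^*(X)\gg X/(\log X)^{1-\delta}$.

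The main obstacle is obtaining exactly the exponent $2/3$, which requires $\delta=1/3$. The fix is to allow primes $\ell\equiv3\bmod4$ with odd $\sharp\tilde E_\ell(\F_\ell)$ as well, paired in even number so that the product stays $1\bmod 4$. Concretely, we count square-free $D_1$ supported on all primes of odd point count (density $1/3$) and restrict to $D_1\equiv1\bmod4$. This restriction is a twist by a character mod $4$, and Selberg--Delange with characters shows that it preserves the order of magnitude, as in Hatley--Ray. Alternatively, one checks directly that the density-$1/3$ set of free primes with $\ell\equiv 1\bmod4$ works when $\Q(\sqrt{-1})\subset\Q(E[2])$.
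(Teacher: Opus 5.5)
Your argument is correct in outline and uses the same decomposition as the paper:
\begin{itemize}
\item a fixed core $D_1$ of $r=(N^*-\lambda_2^*(E,\omega^i))/2$ primes $\ell\equiv5\bmod 8$ with $\ell\nmid N_E$ and $2\mid\sharp\tilde E_\ell(\F_\ell)$, so that $n_\ell=0$ and each prime adds $2$;
\item times square-free $D_2\equiv1\bmod4$ supported on primes $\ell\nmid 2N_ED_1$ with odd point count, a set of density $1/3$.
\end{itemize}
The genuine difference is how you impose $D_2\equiv1\bmod4$ in the count. You insert $\frac12(1+\chi_4(d))$, with $\chi_4$ the nontrivial character mod $4$. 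This requires a Selberg--Delange statement for the signed sum $\sum\chi_4(d)$ over this frobenian set. It also requires a check that this sum does not cancel the main term; it is of lower order, since by Lemma \ref{Lem:linear_disjoint} the free primes split evenly between the classes $1$ and $3\bmod 4$.

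The paper avoids twisted sums entirely. It fixes one free prime $q_0\equiv3\bmod4$ and maps square-free $d\leq X/(q_0D_1)$ with $q_0\nmid d$ injectively into the target set via $d\mapsto d$ or $q_0d$, according to $d\bmod 4$. This gives $m^*_{E,N^*}(X)\geq n_{\mho_E'}(X/q_0D_1)-n_{\mho_E'}(X/q_0^2D_1)$, so only Serre's untwisted asymptotic $n_{\mho}(X)\sim cX/(\log X)^{\alpha(\mho)}$ is needed. Your route gives a genuine asymptotic (with constant $c/2$) at the cost of heavier analytic input. The paper's route gives just the lower bound that is claimed.

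Several points need repair.
\begin{itemize}
\item The inputs you call ``generic'' or a ``fiddly case analysis'' are forced by supersingularity at $2$, and should be proved as in the paper. First, $\bar\rho_{E,2}$ is surjective (Proposition \ref{Prop:mod_2_surjective}); this is exactly what gives the free primes density $1/3$ and hence the exponent $2/3$. Second, $\Q(E[2])\cap\Q(\zeta_8)=\Q$ (Lemma \ref{Lem:linear_disjoint}): the quadratic subfield of $\Q(E[2])$ is unramified at $2$, while $\Q(i)$ and $\Q(\sqrt{\pm2})$ are ramified.
\item Your remark that smaller images are ``treated similarly'' is false in general. If $E(\Q)[2]\neq\{O\}$, every good odd $\ell$ has even point count, so there are no free primes at all. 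It is the supersingularity hypothesis that rules this out.
\item Your claim that every $c_\ell$ is even or zero is wrong. For $\ell\mid N_E$ with $\ell\equiv\pm3\bmod 8$ one gets $c_\ell=2^{0}=1$. So the parity condition on $N^*$ is a restriction of the construction, not a consequence of the formula. Your fallback of putting a prime $\ell\mid N_E$ into the core would break the parity count.
\item Minor: $[\Q(\zeta_8):\Q]=4$, not $8$.
\end{itemize}
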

A related distributional result at $p=3$ was obtained by Burungale--Skinner \cite{BS23} for elliptic curves for which $3$ is an Eisenstein prime of good reduction. Controlling $3$-adic $\lambda$-invariants under quadratic twists, they show that a positive proportion of quadratic twists have the Mordell--Weil rank one and the non-degenerate $3$-adic regulator.

\subsection*{Organization of the paper}In \S \ref{sec:sharp/flat_L}, we review the sharp/flat 2-adic $L$-function constructed by Sprung in \cite{Spr12} and \cite{Spr17}.
In \S \ref{sec:quad_analytic_Iwasawa}, we obtain the formula for the variation of 2-adic sharp/flat Iwasawa invariants in quadratic twists. This formula is a supersingular analogue of Matsuno's formula proved in \cite{Mat00} and \cite{Mat08} for elliptic curves with good ordinary reduction at $2$. Our method is based on arguments of Sinnott \cite{Sin84} and Matsuno \cite{Mat00} in their proofs of Kida-type formulas for $p$-adic $L$-functions. 
In \S \ref{sec:distribution}, as an application of the Matsuno-type formula proved in \S\ref{sec:quad_analytic_Iwasawa}, we obtain an asymptotic lower bound for the number of quadratic twists with a prescribed sharp/flat Iwasawa $\lambda$-invariant, following the method of Hatley--Ray \cite{HR24+}. In \S \ref{sec:example}, we give some examples of our results using Pollack's SageMath code \cite{PolCode}.

\section*{Acknowledgments}

The author would like to express sincere gratitude to Florian Sprung for reading an earlier version of this manuscript with much interest, for pointing out several mathematical mistakes and suggesting various improvements throughout the manuscript. In particular, the author is grateful to him for observing that, with slight modifications, the corresponding results in the earlier version could be strengthened to give Lemma \ref{Lem:linear_disjoint} and Corollary \ref{Cor:large_lambda_twist}.
The author is also deeply grateful to Shinichi Kobayashi for helpful comments, encouragement, and fruitful discussions. He is very grateful to Li-Tong Deng for explaining how this work is related to Deng's announced work \cite{Den} and to the joint work \cite{DL26+} with Yong-Xiong Li. The author would like to thank Ashay Burungale, Chan-Ho Kim, Takenori Kataoka, and Mahiro Atsuta for their encouragement, discussions, and insightful comments. Parts of this work were carried out during the author’s stay at University of Ottawa in October 2024 and at Seoul National University in December 2025--January 2026, and the author is grateful to his host professors, Antonio Lei and Dohyeong Kim, for their hospitality and helpful comments during his visits. The author would like to thank Keiichiro Nomoto and Ryota Shii; without the inspiration from the author's joint work \cite{ANS26} with them, this research would not have been undertaken.
The author was supported by WISE program (MEXT) at Kyushu University.

\section{The sharp/flat 2-adic $L$-function}\label{sec:sharp/flat_L}
In this section, we review the construction of the sharp/flat 2-adic $L$-function for weight-two modular forms by Sprung \cite{Spr17}.

Fix two algebraic closures $\overline{\Q}$ and $\overline{\Q_2}$ of $\Q$ and $\Q_2$, respectively. We also fix an embedding $\overline{\Q}\hookrightarrow \C_2$. Let $\ord_2$ be the associated 2-adic valuation such that $\ord_2(2)=1$ and let $|\cdot|_2$ be the corresponding absolute value. We fix a system $(\zeta_S)_{S\in\N}$ in $\overline{\Q}$, namely, $\zeta_S$ is a primitive $S$-th root of unity, and $(\zeta_S)^{S/S'}=\zeta_{S'}$ for $S'\mid S$.

%%%%%%%%%%%%%%%%%%%%%%
\subsection{Modular symbols and the 2-adic $L$-function}\label{ss:2-adic_L}
%%%%%%%%%%%%%%%%%%%%%%

Let $f=\sum_{n\geq1}a_n(f)q^n$ be a normalized weight-two Hecke eigenform of odd level $N$ with trivial nebentypus with rational Fourier coefficients. We sometimes write $a_n$ for $a_n(f)$. Assume that $\ord_2(a_2)>0$. 
\begin{conv}
For an elliptic curve $A$ over $\Q$, define the N\'{e}ron periods $\Omega_A^{\pm}$ by
\begin{align}
\Omega_A^{\pm}:=\kappa(A)\int_{\delta_A^{\pm}}\omega_A,
\end{align}
where
\begin{align}
\kappa(A):=\begin{cases}
1&(\text{if }A(\R)\text{ is connected}),\\
2&(\text{otherwise}).
\end{cases}
\end{align}
Here, $\omega_A$ is the N\'{e}ron differential of $A$ and $\delta_A^{\pm}$ is a generator of $H_1(A(\C),\Z)^{\pm}$. 
Let $\pi_A:X_0(N)\to A$ be the modular parametrization and let $A_0$ be the optimal elliptic curve in the $\Q$-isogeny class of $A$. Then there exists a constant $c(A_0)$ such that
\begin{align}
\pi_{A_0}^*\omega_{A_0}=c(A_0)\cdot 2\pi if_{A_0}(z)dz.
\end{align}
The constant $c(A_0)$ is called the \emph{Manin constant} and it is known to be an integer (see \cite[Proposition 2]{Edi91}). Moreover, $c(A_0)$ is conjectured to be $1$ (see \cite[\S5]{Man72}). Throughout this paper, we choose the \emph{Shimura periods} $\Omega_f^{\pm}$ constructed in \cite{Shi77} and normalize them by 
\begin{align}
\Omega_{f}^{\pm}=c(A_f)^{-1}\Omega_{A_f}^{\pm},
\end{align}
where $A_f$ is the optimal quotient elliptic curve associated to $f$. 
\end{conv}
We define the \emph{periods} of $f$ by
\begin{align}
\phi(f,r):=2\pi i\int_{i\infty}^{r}f(z)dz
\end{align}
for $r\in \Q$ and we put 
\begin{align}
\eta(f,r)^{\pm}:=\frac{\phi(f,r)\pm \phi(f,-r)}{2}.
\end{align}
\begin{thm}[cf. {\cite{Man72}}, {\cite[Theorem 1]{Shi77}}, {\cite[Theorem 3.5.4]{GS94}}, and {\cite[Theorem 5.15 (2)]{Spr17}}]\label{Thm:periods}
For all $r\in \Q$, we see that
\begin{align}
[r]_f^{\pm}:=\frac{\eta(f,r)^{\pm}}{\Omega_f^{\pm}}\in \frac{1}{2}\Z_2.
\end{align}
\end{thm}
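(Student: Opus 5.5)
The plan is to reduce Theorem \ref{Thm:periods} to integrality statements for modular symbols of the optimal quotient $A_f$, via its modular parametrization, and then track the normalizing constants $\kappa(A_f)$, $c(A_f)$ and the division by $2$ in $\eta(f,r)^{\pm}$.

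\emph{Step 1: lattice values.} Let $\pi=\pi_{A_f}:X_0(N)\to A_f$ be the modular parametrization. For $r\in\Q$ the cusps $i\infty$ and $r$ are rational points on $X_0(N)$. By Manin--Drinfeld, their difference is torsion in the Jacobian, so some integer multiple of the path $\{i\infty,r\}$ is a closed cycle. Pushing forward by $\pi$ and using $\pi^*\omega_{A_f}=c(A_f)\,2\pi i f(z)dz$, we get
\begin{align}
c(A_f)\,\phi(f,r)=\int_{\pi_*\{i\infty,r\}}\omega_{A_f}\in \Q\otimes\Lambda,
\end{align}
where $\Lambda$ is the period lattice of $\omega_{A_f}$. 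This only gives rationality, so the substance is the denominator: we must show it is at most $2$ at the prime $2$.

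\emph{Step 2: the $2$-part of the denominator.} The image of $\{i\infty,r\}$ in $A_f(\C)=\C/\Lambda$ is the point $\pi(r)-\pi(i\infty)$, which is a torsion point of $A_f$. Its order divides the order of the corresponding cuspidal divisor class. I would use that for odd level $N$ the cuspidal subgroup of $J_0(N)$ has odd order up to factors coming from the Eisenstein ideal (Ogg, Mazur), together with $a_2\equiv 0\bmod 2$ from the supersingularity hypothesis, i.e. $\ord_2(a_2)>0$. The Hecke eigenvalue relation $T_\ell\equiv 1+\ell$ on cuspidal classes gives $a_\ell\equiv 1+\ell$ modulo the order, and $\ell=2$ then forces $a_2\equiv 3$, hence $a_2$ odd. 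This contradicts $\ord_2(a_2)>0$. So the image point has odd order and $c(A_f)\phi(f,r)\in \Lambda\otimes\Z_{(2)}$. This is the step I expect to be the main obstacle. It needs care because the Eisenstein argument must be run on the $f$-isotypic quotient and not on all of $J_0(N)$; I would first try citing the known result (\cite{GS94}, \cite[Theorem 5.15]{Spr17}) and only verify that its hypotheses hold here.

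\emph{Step 3: splitting into $\pm$ parts.} Write $\Lambda\otimes\Z_2$ in terms of $\delta^{\pm}$. If $A_f(\R)$ is connected, then $\Lambda$ is non-rectangular and $\Lambda\subset\frac12(\Z\Omega^++\Z\Omega^-)$ with $\kappa=1$. If $A_f(\R)$ is disconnected, then $\Lambda$ is rectangular, and $\kappa=2$ doubles the periods. In either case, projecting $\lambda\in\Lambda$ to $(\lambda\pm\bar\lambda)/2$, which accounts for $\eta^{\pm}$ since complex conjugation corresponds to $r\mapsto -r$, yields
\begin{align}
\eta(f,r)^{\pm}\in \tfrac12\, c(A_f)^{-1}\Omega_{A_f}^{\pm}\,\Z_2=\tfrac12\,\Omega_f^{\pm}\Z_2 .
\end{align}
The normalization $\Omega_f^{\pm}=c(A_f)^{-1}\Omega^{\pm}_{A_f}$ is exactly what absorbs the Manin constant in this last step. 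Dividing by $\Omega_f^{\pm}$ gives $[r]_f^{\pm}\in\frac12\Z_2$.
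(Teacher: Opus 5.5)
Your argument is correct in substance and follows the same route as the paper. The paper's proof only cites Manin's Hecke-operator argument to get $2\eta(f,r)^{\pm}/\Omega_{A_f}^{\pm}\in c(A_f)^{-1}\Z$; its hypothesis ``$a_2\equiv 0\not\equiv 1\bmod 2$'' is exactly the condition $a_2\not\equiv 1+2 \bmod 2$. It then absorbs $c(A_f)$ via $\Omega_f^{\pm}=c(A_f)^{-1}\Omega_{A_f}^{\pm}$. You unpack both ingredients: the relation $(a_2-3)P=0$ on $A_f$, and the lattice bookkeeping ($\kappa\in\{1,2\}$, connected versus rectangular lattice) that produces the factor $\frac12$. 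Working in $\Z_{(2)}$ is the right form, because odd denominators really occur: for $X_0(11)$, where $a_2=-2$, one has $[0]_f^+=1/5$. So the paper's intermediate ``$\in c(A_f)^{-1}\Z$'' should be read $2$-locally.

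One claim is false as stated, though the repair is short. A cusp $r=u/v$ (take $v$ odd) is not in general a rational point of $X_0(N)$. Setting $d=\gcd(v,N)$ and $g=\gcd(d,N/d)$, the cusp is defined over $\Q(\zeta_g)$ and is rational only when $g=1$, e.g.\ for every $r$ when $N$ is squarefree. From $T_2(r)=(2r)+(r/2)+((r+1)/2)$ one gets $T_2(r)=2(\tau r)+(\tau^{-1}r)$. Here $\tau$ permutes the cusps of type $d$ by multiplying their invariant $u\,(v/d)\bmod g$ by $2$, so $T_2-3$ need not kill $(r)-(\infty)$.

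The fix: let $P_k$ be the image of $(\tau^k r)-(\infty)$ in $A_f$. Then $a_2P_k=2P_{k+1}+P_{k-1}$, so the $P_k$ are killed by $a_2-2\tau-\tau^{-1}=-\tau^{-1}(1-a_2\tau+2\tau^2)$. Since $a_2$ is even, the second factor lies in $1+2\Z[\langle\tau\rangle]$ and is a unit in $\Z_2[\langle\tau\rangle]$, so every $P_k$ still has odd order. The cusps the paper actually feeds into this theorem are covered by your version verbatim: $a/(2^nM)$ at squarefree level, and $b/2^{n+2}$ at the non-squarefree level of $f\otimes\chi$, whose denominators are prime to the level so they are equivalent to the rational cusp $0$.

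Finally, drop the aside that the cuspidal subgroup of $J_0(N)$ has odd order for odd $N$. It is false (for $N=17$ the class of $(0)-(\infty)$ has order $4$), and it is unused, since only the relation on the $f$-quotient matters.
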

\begin{proof}
Since $a_2\equiv 0\not\equiv 1\bmod 2$, by \cite{Man72} (cf. \cite[Theorem 5.15 (2)]{Spr17}), we have
\begin{align}
2\cdot \frac{\eta(f,r)^{\pm}}{\Omega_{A_f}^{\pm}}\in c(A_f)^{-1}\Z.
\end{align}
Therefore we have
\begin{align}
2[r]_f^{\pm}=2\cdot\frac{\eta(f,r)^{\pm}}{\Omega_{A_f}^{\pm}}\cdot c(A_f)\in\Z.
\end{align}
\end{proof}
The symbols $[r]_f^{\pm}$ in Theorem \ref{Thm:periods} are called \emph{modular symbols}.

We put $\Z_{2,M}:=\Z_2\times (\Z/M\Z)$ and $\Z_{2,M}^{\times}:=\Z_2^{\times}\times (\Z/M\Z)^{\times}$ for some odd integer $M$. Denote by $C^0(\Z_{2,M}^{\times})$ the $\C_2$-valued step functions on $\Z_{2,M}^{\times}$. Let $\alpha,\beta$ be the roots of the Hecke polynomial $X^2-a_2(f)X+2$ at $2$. In this case, we see that $\alpha\neq\beta$ and $\ord_2(\alpha)= \ord_2(\beta)=1/2$. For $\xi\in\{\alpha,\beta\}$, we define two linear maps from $C^0(\Z_{2,M}^{\times})$ to $\C_2$ by
\begin{align}
\mu_{f,\xi,M}^{\pm}(a+2^nM\Z_{2,M}):=\frac{1}{\xi^{n+1}}\left(\left[\frac{a}{2^nM}\right]_f^{\pm},\left[\frac{a}{2^{n-1}M}\right]_f^{\pm}\right)\begin{pmatrix}\xi\\ -1\end{pmatrix}
\end{align}
for each integer $a$ coprime to $2M$. 
\begin{thm}[cf. {\cite[Lemma 1.6]{Vis76}}]
For $\xi\in\{\alpha,\beta\}$, $\mu_{f,\xi,M}^{\pm}$ extend to two linear maps on the space of all locally analytic functions on $\Z_{2,M}^{\times}$.
\end{thm}
\begin{prop}
For $\xi\in\{\alpha,\beta\}$, $\mu_{f,\xi,M}^{\pm}$ is a $1/2$-admissible measure.
\end{prop}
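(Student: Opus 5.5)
We need to show that $\mu_{f,\xi,M}^{\pm}$ is $h$-admissible with $h=1/2=\ord_2(\xi)$. Concretely, the plan is to prove a growth bound of the form
\begin{align}
\sup_{a}\left|\int_{a+2^nM\Z_{2,M}}(x-a)^j\,d\mu_{f,\xi,M}^{\pm}\right|_2 = O\bigl(2^{n(1/2-j)}\bigr)
\end{align}
for $j=0,1$, uniformly in $a$ and $n$. Admissibility for $h<2$ requires exactly the cases $j=0,1$ (the ``$h$-admissible of order $[h]+1$'' condition, in Vi\v{s}ik's terminology). The case $j=0$ is the value on a step function. The case $j=1$ is the value on the function $x\mapsto x-a$ restricted to the ball, which is defined through the extension in the theorem cited from \cite[Lemma 1.6]{Vis76}.

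First, the distribution property. We check that $\mu_{f,\xi,M}^{\pm}$ is finitely additive: the sum over the two lifts $b\equiv a \bmod 2^nM$, $b \bmod 2^{n+1}M$, recovers the value on $a+2^nM\Z_{2,M}$. This follows from the Hecke relation
\begin{align}
a_2[r]_f^{\pm}=\sum_{b}[(r+b)/2]_f^{\pm}+[2r]_f^{\pm}
\end{align}
(here $2\nmid N$ and trivial character) together with $\xi^2=a_2\xi-2$. This is the standard computation; it guarantees the measure is well defined on $C^0$.

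Second, the bound for $j=0$. By Theorem \ref{Thm:periods}, each modular symbol lies in $\frac12\Z_2$, so the row vector has entries bounded by $2$. Since $|\xi|_2=2^{-1/2}$, we get $|\xi^{-(n+1)}|_2=2^{(n+1)/2}$. Hence
\begin{align}
|\mu(a+2^nM\Z_{2,M})|_2\le C\cdot 2^{n/2},
\end{align}
which is exactly the order-zero admissibility bound.

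Third, the bound for $j=1$; this is the main obstacle. Here we cannot simply plug in values. We would use Vi\v{s}ik's formulation: a distribution with growth $O(2^{nh})$ on balls admits a unique extension to $h$-admissible measures once one checks the moment estimate. This is achieved by writing $(x-a)\mathbf 1_{a+2^nM\Z_{2,M}}$ as a limit of Riemann sums over sub-balls of level $m\ge n$:
\begin{align}
\sum_{b\equiv a\,(2^nM),\ b \bmod 2^mM}(b-a)\,\mu(b+2^mM\Z_{2,M}).
\end{align}
The needed bound is $2^{n(1/2-1)}$, up to a constant. Each term is bounded by $|b-a|_2\cdot 2^{m/2}\le 2^{-n}2^{m/2}$, which blows up in $m$, so we must show the sums converge and telescope. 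The key input is that the pair of symbols is transformed by the matrix with rows $(a_2,1)$ and $(-2,0)$ under passing from level $m$ to level $m-1$. The two eigen-directions of this matrix correspond to $\alpha$ and $\beta$, and the pairing with $(\xi,-1)^{T}$ isolates the eigencomponent, so the sum over lifts at level $m$ differs from that at level $m-1$ by $O(2^{-n}\cdot |\xi|_2^{-m}\cdot 2^{-(m-n)})$. This is geometric in $m$ because $1/2<1$, so the Riemann sums are Cauchy and the limit inherits the bound $O(2^{-n/2})$.

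Since $h=1/2<1$, the cases $j=0,1$ suffice, and the proposition follows. Alternatively, one can cite Vi\v{s}ik \cite[Lemma 1.6]{Vis76} directly for this last step, once the $j=0$ bound with $h=\ord_2(\xi)$ has been established.
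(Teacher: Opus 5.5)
Your argument is correct and is essentially the paper's route. Additivity comes from the $T_2$ Hecke relation on modular symbols together with $\xi^2=a_2\xi-2$, which is the computation of \cite[Section 10]{MTT86}. Admissibility is the bound $|\mu_{f,\xi,M}^{\pm}(a+2^nM\Z_{2,M})|_2=O(2^{n/2})$, coming from $[r]_f^{\pm}\in\frac12\Z_2$ and $\ord_2(\xi)=1/2$, which is what the paper imports from \cite[Lemma 3.8]{Vis76}.

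One correction: for $h=1/2$ we have $[h]=0$, so admissibility involves only $j=0$, and your $j=1$ step is unnecessary rather than the main obstacle. If you want it anyway, it follows directly from additivity and the $j=0$ bound: consecutive Riemann sums differ by terms bounded by $|c-b|_2\,|\mu(c+2^{m+1}M\Z_{2,M})|_2\le C\,2^{-m}2^{(m+1)/2}$, with no eigenvector argument needed.
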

\begin{proof}
Additivity follows from the argument in \cite[Section 10]{MTT86}, and the admissibility follows from \cite[Lemma 3.8]{Vis76}.
\end{proof}
Since each character $\psi:\Z_{2,M}^{\times}\to\C_2^{\times}$ is locally analytic, we can define, for $\xi\in\{\alpha,\beta\}$, an element
\begin{align}
L_{2,M}(f,\xi,\psi):=\mu_{f,\xi,M}^{\sgn(\psi)}(\psi),
\end{align}
where $\sgn(\psi)$ is the sign $\psi(-1)$ of $\psi$. 
The function $L_{2,M}(f,\xi,\psi)$ has the following interpolation property for each $\xi\in\{\alpha,\beta\}$.
\begin{thm}[Amice--V\'{e}lu {\cite{AV75}}, Vi\v{s}ik {\cite{Vis76}}; cf. {\cite[Proposition 2.11]{Pol03}}]
Let $\psi:\Z_{2,M}^{\times}\to\C_2^{\times}$ be a finite-order character whose associated primitive Dirichlet character, also denoted by $\psi$, has conductor $2^{\nu}M$. Then, for $\xi\in\{\alpha,\beta\}$, $L_{2,M}(f,\xi,\psi)$ satisfies the following interpolation formula
\begin{align}\label{eq:interpolation_formula}
L_{2,M}(f,\xi,\psi)=\frac{1}{\xi^{\nu}}\left(1-\frac{\psi^{-1}(2)}{\xi}\right)\left(1-\frac{\psi(2)}{\xi}\right)\frac{2^{\nu}M}{\tau(\psi^{-1})}\frac{L(f\otimes \psi^{-1},1)}{\Omega_f^{\sgn(\psi)}},
\end{align}
where $L(f\otimes\psi^{-1},s)=\sum_n(a_n\psi^{-1}(n)/n^s)$ is the complex $L$-function of $f\otimes\psi^{-1}$, $\tau(\psi^{-1})$ is the Gau{\ss} sum, and $f\otimes \psi^{-1}$ is the twist of $f$ by the finite character $\psi^{-1}$.
\end{thm}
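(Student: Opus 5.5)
We prove the interpolation formula by evaluating $\mu^{\pm}_{f,\xi,M}$ directly on $\psi$ and converting the resulting finite sums of modular symbols into twisted $L$-values with Birch's lemma. Throughout, write $\epsilon=\sgn(\psi)$.

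First, we work at a level $n\ge \nu$, with $n\ge1$, fine enough that $\psi$ factors through $(\Z/2^nM\Z)^\times$. The definition of $\mu$ then gives
\begin{align}
L_{2,M}(f,\xi,\psi)=\sum_{a\in(\Z/2^nM\Z)^\times}\psi(a)\,\frac{1}{\xi^{n+1}}\left(\xi\left[\tfrac{a}{2^nM}\right]^{\epsilon}_f-\left[\tfrac{a}{2^{n-1}M}\right]^{\epsilon}_f\right).
\end{align}
Additivity of the measure makes this independent of the choice of $n$. We may therefore take $n=\nu$ when $\nu\ge1$. If $\nu=0$ we take $n=1$, since the $2$-part of $\psi$ is trivial in that case.

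Second, we apply Birch's lemma. For a primitive character $\chi$ of conductor $m$, one has
\begin{align}
\sum_{a\bmod m}\overline{\chi}(a)\,\phi(f,a/m)=\frac{\tau(\overline\chi)}{\dots}\cdots,
\end{align}
and more precisely
\begin{align}
\sum_{a}\chi(a)\,\eta(f,a/m)^{\epsilon}=\frac{m}{\tau(\chi^{-1})}\,L(f\otimes\chi^{-1},1),
\end{align}
up to sign conventions. This follows by expanding $\phi(f,a/m)=-\sum_n \frac{a_n}{n}e^{2\pi i n a/m}$ formally (via Mellin transform) and using $\sum_a\chi(a)e^{2\pi i na/m}=\overline\chi(n)\tau(\chi)$. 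The relation $\tau(\chi)\tau(\chi^{-1})=\chi(-1)m$ then converts this to the stated form. The $\pm$-projection keeps exactly the $\epsilon$ part, because $\psi(-a)=\epsilon\psi(a)$.

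Third, we handle the non-primitive sums.
\begin{itemize}
\item If $\nu\ge1$, the second sum over $a\bmod 2^nM$ with denominator $2^{n-1}M$ vanishes, since $\psi$ is primitive of $2$-power conductor $2^n$ and the terms depend only on $a$ mod $2^{n-1}M$. We are left with $\xi^{-\nu}\cdot\frac{2^\nu M}{\tau(\psi^{-1})}L/\Omega$. In this case $\psi(2)=0$, so the Euler-type factor is $1$, matching the formula.
\item If $\nu=0$ and $n=1$, the sums run over $a$ odd mod $2M$. We must rewrite $\sum_{a \text{ odd}}\psi(a)[a/2M]$ in terms of sums over all residues mod $M$. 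We use the Hecke relation $T_2$ on modular symbols,
\begin{align}
a_2[r]=[r/2]+[(r+1)/2]+[2r],
\end{align}
which is valid since $2\nmid N$. Then we distribute over odd/even lifts.
\end{itemize}
Collecting terms produces $\xi^{-2}\bigl(\xi(a_2-\psi(2)-\psi^{-1}(2)\cdot 2/\ldots)\bigr)$ times the primitive sum. Using $\xi^2-a_2\xi+2=0$, this should simplify to $(1-\psi^{-1}(2)/\xi)(1-\psi(2)/\xi)$.

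The main obstacle is this $\nu=0$ bookkeeping: the correct factors of $\psi(2)$ versus $\psi^{-1}(2)$ and the resulting quadratic identity in $\xi$. We will verify it by writing both sides as polynomials in $\xi^{-1}$ and using $\alpha\beta=2$. In the end this is the standard computation of Mazur--Tate--Teitelbaum \cite{MTT86} (cf. \cite[Proposition 2.11]{Pol03}) with $p=2$.
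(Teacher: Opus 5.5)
The paper does not prove this statement itself (it is quoted from Amice--V\'{e}lu, Vi\v{s}ik and \cite[Proposition 2.11]{Pol03}), and your argument is exactly the standard Mazur--Tate--Teitelbaum computation behind those references, so it is correct: evaluate the measure at level $\max\{\nu,1\}$, kill the imprimitive sum for $\nu\geq 2$ by primitivity, and convert the remaining sum with Birch's lemma. The $\nu=0$ step you left as ``should simplify'' does close, with no stray factor of $2$: writing $S_m:=\sum_{a\in(\Z/m\Z)^{\times}}\psi(a)[a/m]_f^{\sgn(\psi)}$ and summing $a_2[b/M]=[b/2M]+[(b+M)/2M]+[2b/M]$ against $\psi(b)$ (one of $b,b+M$ is odd, and the even one gives $[2^{-1}b/M]$) yields $S_{2M}=(a_2-\psi(2)-\psi^{-1}(2))S_M$, hence $\xi^{-2}(\xi S_{2M}-S_M)=(1-\psi(2)/\xi)(1-\psi^{-1}(2)/\xi)S_M$ using $\xi a_2=\xi^2+2$ and $\psi(2)\psi^{-1}(2)=1$; as for the sign you left open, with the paper's $\phi(f,r)=2\pi i\int_{i\infty}^r f(z)\,dz$ the expansion carries no minus sign and one gets exactly $\sum_a\psi(a)\phi(f,a/m)=\tau(\psi)L(f\otimes\psi^{-1},1)$, while $\tau(\psi)=\psi(-1)m/\tau(\psi^{-1})$, so the displayed formula holds up to the factor $\psi(-1)$, which is a matter of Gauss-sum normalization.
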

Using the decomposition $\Z_{2,M}^{\times}\cong (\Z/4M\Z)^{\times}\times (1+4\Z_2)$, $\psi$ can be uniquely written as the product $\psi=\psi_t\psi_u$ of the \emph{tame} character $\psi_t$ factoring through $(\Z/4M\Z)^{\times}$ and the \emph{wild} character $\psi_u$ factoring through $1+4\Z_2$. Here, $\psi_u:1+4\Z_2\to\C_2^{\times}$ is the character sending the topological generator $5$ of $1+4\Z_2$ to $u$ for some $u\in\C_2$ with $|u-1|_2<1$. When $M=1$, $\psi_t$ has the form $\omega^i$ with $i\in\{0,1\}$, where $\omega$ is the natural inclusion $\omega:(\Z/4\Z)^{\times}\hookrightarrow \C_2^{\times}$. See also \cite[p.528]{Pol03} for details.
\begin{thm}[Amice--V\'{e}lu {\cite{AV75}}, Vi\v{s}ik {\cite{Vis76}}]\label{Thm:2-adic_L_analytic}
For $\xi\in\{\alpha,\beta\}$ and a fixed tame character $\psi_t:(\Z/4M\Z)^{\times}\to \C_2^{\times}$, $L_{2,M}(f,\xi,\psi_t\psi_u)$ is analytic on the open unit disc $\{u\in\C_2\ |\ |u-1|_2<1\}$.
\end{thm}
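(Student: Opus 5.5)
The plan is to realize $L_{2,M}(f,\xi,\psi_t\psi_u)$ as the Amice transform of the $1/2$-admissible measure $\mu^{\pm}_{f,\xi,M}$, restricted to the fibre of $\Z_{2,M}^{\times}$ over $\psi_t$, and then show that this transform is a power series in $T=u-1$ with growth $o(\log_2^{1}(1+T))$. Such a series converges on the open unit disc.

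First, set $\gamma=5$ and write $\Z_{2,M}^{\times}\cong(\Z/4M\Z)^{\times}\times(1+4\Z_2)$. For fixed $\psi_t$ of sign $\epsilon$, define the twisted projected measure $\nu$ on $1+4\Z_2\cong\Z_2$ (via $\gamma^x\leftrightarrow x$) by
\begin{align}
\nu(g):=\mu^{\epsilon}_{f,\xi,M}\bigl(\psi_t\cdot (g\circ\prj)\bigr),
\end{align}
where $\prj$ is the projection to $1+4\Z_2$. Note that the sign of $\psi_t\psi_u$ equals the sign of $\psi_t$, because $\psi_u$ is trivial on $-1$. Hence $L_{2,M}(f,\xi,\psi_t\psi_u)=\nu(\psi_u)$, and $\nu$ is again $1/2$-admissible, being a finite combination of restrictions of $\mu^{\epsilon}_{f,\xi,M}$ to compact opens. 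Next, consider the formal series
\begin{align}
G(T):=\sum_{n\ge0}\nu\Bigl(\binom{x}{n}\Bigr)T^n .
\end{align}
Expanding $\psi_u(\gamma^x)=(1+T)^x=\sum_n\binom{x}{n}T^n$ with $u=1+T$, and using that $\nu$ extends continuously and linearly to locally analytic functions (by the Vi\v{s}ik extension theorem stated above), one expects $\nu(\psi_u)=G(u-1)$. This interchange of $\nu$ with the infinite sum is justified for $|T|_2<1$ once the coefficients are controlled.

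The key estimate, and the main obstacle, is bounding the coefficients. Admissibility gives a uniform bound
\begin{align}
\Bigl|\nu\bigl((x-a)^j\mathbf 1_{a+2^m\Z_2}\bigr)\Bigr|_2\le C\,2^{m(1/2-j)}, \qquad j=0,1.
\end{align}
Following Vi\v{s}ik's and Amice--V\'{e}lu's argument, one writes $\binom{x}{n}$ locally on cosets $a+2^m\Z_2$ with $2^m\approx n$ via Mahler/Taylor approximation of order $1$, and deduces
\begin{align}
\Bigl|\nu\Bigl(\binom{x}{n}\Bigr)\Bigr|_2=O(n^{1/2})\quad\text{(up to a fixed constant)}.
\end{align}
Hence $G(T)$ converges for $|T|_2<1$, since $n^{1/2}r^n\to0$ for $r<1$, and $G$ has order $1/2$ in Vi\v{s}ik's sense. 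Finally, for $|T|_2<1$ the partial sums $\sum_{n\le N}\binom{x}{n}T^n$ converge to $(1+T)^x$ in the norm used to extend $\nu$ to locally analytic functions of bounded radius. Continuity of the extension then yields $\nu(\psi_u)=G(u-1)$, which shows that $L_{2,M}(f,\xi,\psi_t\psi_u)$ is analytic on the disc $\{u\in\C_2\mid |u-1|_2<1\}$.

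I expect the coefficient estimate to be the delicate step. To handle it, I would first reduce to the basic $1/2$-admissibility bounds on cosets. I would then use that the Lipschitz/Taylor remainder of $\binom{x}{n}$ on $a+2^m\Z_2$ has size $\ll |2^m|^{2}\cdot 2^{O(\log n)}$, and choose $m\sim\log_2 n$ to balance the two contributions.
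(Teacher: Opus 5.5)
The paper gives no argument for this statement and simply cites Amice--V\'elu and Vi\v{s}ik. Your outline is the standard proof from those sources and is correct: restrict the $\tfrac12$-admissible measure to the $\psi_t$-fibre using $\sgn(\psi_t\psi_u)=\sgn(\psi_t)$, bound the Amice coefficients $\nu\bigl(\binom{x}{n}\bigr)$ by $O(n^{1/2})$, and identify $\nu(\psi_u)$ with the resulting series by continuity of Vi\v{s}ik's extension. The step you flag as delicate is cleanest via the continuity bound $|\nu(g\mathbf{1}_{a+2^m\Z_2})|_2\le C\,2^{m/2}\sup|g|_2$, with the supremum taken over the closed disc of radius $2^{-m}$ about $a$ in $\C_2$, combined with the elementary fact (expand $\binom{a+2^m y}{n}$ by Vandermonde's identity) that $\bigl|\binom{x}{n}\bigr|_2\le 1$ on such discs whenever $2^m\ge n$.
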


From Theorem \ref{Thm:2-adic_L_analytic}, we obtain the power series expansion about $u=1$. Changing the variable $T=u-1$, we denote $L_{2,M}(f,\xi,\psi_t\psi_u)$ by $L_{2,M}(f,\xi,\psi_t,T)$ for $\xi\in\{\alpha,\beta\}$. It is known (cf. \cite[p.531]{Pol03}) that
\begin{align}
L_{2,M}(f,\xi,\psi_t,T)\in \Q_2(\xi,\psi_t)\llbracket T\rrbracket
\end{align}
for each $\xi\in\{\alpha,\beta\}$. In the rest of the paper, for a fixed tame character $\omega^i:\Delta_1\to\C_2^{\times}$, we sometimes use the same symbol $\omega^i$ for the extended character on $\Delta_M$ via the natural projection $\Delta_M\twoheadrightarrow \Delta_1$ for some odd integer $M$.
%%%%%%%%%%%%%%%%%%%%%%
\subsection{Mazur--Tate elements}\label{ss:MT}
%%%%%%%%%%%%%%%%%%%%%%

For an integer $n\geq0$ and an odd positive integer $M$, we put $\mathcal{G}_{n,M}:=\gal(\Q(\mu_{2^nM})/\Q)$. Let $\Q_n$ be the subextension of $\Q(\mu_{2^{n+2}})$ such that $\Gamma_n:=\gal(\Q_n/\Q)\cong \Z/2^n\Z$. The group $\mathcal{G}_{n,M}$ admits a decomposition
\begin{align}
\mathcal{G}_{n,M}\cong \Delta_M\times \Gamma_{n-2},
\end{align}
for $n\geq2$, where $\Delta_M:=(\Z/4M\Z)^{\times}$. For simplicity, we put $\Delta:=\Delta_1$. We put $\Gamma:=\varprojlim_n\Gamma_n\cong \Z_2$ and $\Lambda:=\Z_2\llbracket \Gamma\rrbracket$. Then we have the isomorphism $\Lambda\cong \Z_2\llbracket T\rrbracket$, which sends a topological generator $5$ of $\Gamma$ to $1+T$. This isomorphism induces the isomorphism $\Lambda_n:=\Z_2[\Gamma_n]\cong \Z_2\llbracket T\rrbracket/((1+T)^{2^n}-1)$ for each $n\geq0$.

We define the \emph{Mazur--Tate element} by
\begin{align}
\theta_{n,M}^{\pm}(f):=\sum_{a\in(\Z/2^nM)^{\times}}\left[\frac{a}{2^nM}\right]_f^{\pm}\sigma_a\in \Z_2[1/2][\mathcal{G}_{n,M}]
\end{align}
for $n\geq0$ and an odd integer $M$. Here, $\sigma_a$ is the element of $\mathcal{G}_{n,M}$ such that $\sigma_a(\zeta_{2^nM})=\zeta_{2^nM}^a$. For a fixed tame character $\omega^i$, we define the $\omega^i$\emph{-isotypical component} by
\begin{align}
\varepsilon_{\omega^i}\theta_{n,M}^{\sgn(\omega^i)}(f)\in \Q_2[\mathcal{G}_{n,M}],
\end{align}
where $\varepsilon_{\omega^i}$ is the idempotent
\begin{align}
\varepsilon_{\omega^i}=\frac{1}{\sharp\Delta_M}\sum_{\tau\in\Delta_M}\omega^i(\tau)\tau^{-1}.
\end{align}
Using the natural isomorphisms
\begin{align}
\varepsilon_{\omega^i}\Q_2[\mathcal{G}_{n+2,M}]\cong (\varepsilon_{\omega^i}\Q_2[\Delta_M])\otimes_{\Q_2} \Q_2[\Gamma_n]\cong \Q_2[\Gamma_n]\cong \Q_2\llbracket T\rrbracket/((1+T)^{2^n}-1),
\end{align}
and regarding $\omega^i$ as a map on $(\Z/2^{n+2}M\Z)^{\times}$ via the natural projection $(\Z/2^{n+2}M\Z)^{\times}\twoheadrightarrow \Delta_M$, $\varepsilon_{\omega^i}\theta_{n+2,M}^{\sgn(\omega^i)}(f)$ can be identified with
\begin{align}
\theta_{n,M}(f,\omega^i,T):=\sum_{a\in (\Z/2^{n+2}M\Z)^{\times}}\left[\frac{a}{2^{n+2}M}\right]_f^{\sgn(\omega^i)}\omega^i(a)(1+T)^{t_n(a)}\in \Q_2[T]/((1+T)^{2^n}-1).
\end{align}
Here, $t_n(a)$ is the unique element of $\Z/2^n\Z$ such that
\begin{align}
5^{t_n(a)}\equiv\langle a\rangle \bmod 2^{n+2},
\end{align}
where $\langle a\rangle$ is the second projection of the decomposition
\begin{align}
(\Z/2^{n+2}M\Z)^{\times}\cong (\Z/4M\Z)^{\times}\times (1+4\Z_2)/(1+2^{n+2}\Z_2).
\end{align}
Similarly, we define the map $t:\Z_{2,M}^{\times}\to\Z_2$ by
\begin{align}
\langle a\rangle=5^{t(a)},
\end{align}
where $\langle a\rangle$ is the image of the projection $\Z_{2,M}^{\times}\to 1+4\Z_2$. For a rational prime $\ell\mid M$, we define $t(\ell)$ by taking $M=1$ in the above definition.
\begin{prop}[cf. {\cite[Corollary 5.6]{Spr17}}]\label{Prop:MT_integral}
For each $n\geq0$, we have
\begin{align}
\theta_{n,M}(f,\omega^i,T)\in\Lambda_n.
\end{align}
\end{prop}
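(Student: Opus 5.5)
The claim is that $\theta_{n,M}(f,\omega^i,T)$ has $2$-integral coefficients. By Theorem \ref{Thm:periods}, every modular symbol $[r]_f^{\pm}$ lies in $\frac{1}{2}\Z_2$, so $2\,\theta_{n,M}(f,\omega^i,T)\in\Lambda_n$ automatically. It therefore suffices to prove that the reduction of $2\theta_{n,M}(f,\omega^i,T)$ modulo $2$ vanishes in $\F_2[\Gamma_n]$. Concretely, for each class $b\in\Gamma_n$, the sum $\sum_{a} 2[\frac{a}{2^{n+2}M}]_f^{\pm}\omega^i(a)$ over the $a$ with $t_n(a)=b$ must be even. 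Since $\omega^i(a)=\pm1$, the signs do not matter modulo $2$, so we need only treat the plain sum of the $2[\cdot]$'s over each fibre.

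The fibre of $t_n$ over $b$ consists of the residues $a$ in $(\Z/2^{n+2}M\Z)^{\times}$ whose image in $1+4\Z_2$ modulo $2^{n+2}$ is $5^b$. These residues split into pairs $\{a,-a\}$, because $-1$ lies in the tame part $\Delta_M$ and so does not change $\langle a\rangle$. Using the sign $\pm=\sgn(\omega^i)$, we have $[-r]^{\pm}=\pm[r]^{\pm}$. Hence $2[\frac{a}{2^{n+2}M}]^{\pm}+2[\frac{-a}{2^{n+2}M}]^{\pm}$ equals either $2\cdot 2[\cdot]\in 2\Z_2$ or $0$. In both cases the contribution of each pair is even, so every fibre sum is even, and $\theta_{n,M}(f,\omega^i,T)\in\Lambda_n$.

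The main point to check is that $a\mapsto -a$ is a fixed-point-free involution on each fibre. It is fixed-point-free because $a\not\equiv -a$ modulo the even modulus $2^{n+2}M$ when $a$ is a unit. It preserves the fibre because $\langle -a\rangle=\langle a\rangle$.

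For $n=0$ the same argument applies, with the fibre being all of $(\Z/4M\Z)^{\times}$. If a subtlety arises, for instance if the normalization in Theorem \ref{Thm:periods} or the factor $\omega^i(-1)$ pairing behaves differently, I would instead invoke the finer integrality \cite[Corollary 5.6]{Spr17} directly. That result uses the distribution relation $a_2[\frac{a}{2^{n+1}M}]=\sum[\cdot]$ together with $2\mid a_2$.
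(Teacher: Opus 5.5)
Your argument is correct and is essentially the paper's proof. Both pair $a$ with $-a$ under the fixed-point-free involution of $(\Z/2^{n+2}M\Z)^{\times}$, and both use $t_n(-a)=t_n(a)$, $[-r]_f^{\pm}=\pm[r]_f^{\pm}$ and $2[r]_f^{\pm}\in\Z_2$. The only difference is cosmetic: the paper keeps the factor $\omega^i(-a)=\sgn(\omega^i)\omega^i(a)$, so the two signs cancel and each pair contributes exactly $2[\frac{a}{2^{n+2}M}]_f^{\sgn(\omega^i)}\omega^i(a)(1+T)^{t_n(a)}$, with no reduction modulo $2$ needed; you instead discard the character values modulo $2$ before pairing.
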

\begin{proof}
Since the involution $a\mapsto -a$ on $(\Z/2^{n+2}M\Z)^{\times}$ has no fixed points, by Theorem \ref{Thm:periods}, we see that
\begin{align}
&\theta_{n,M}(f,\omega^i,T)\\
&=\sum_{\substack{a\in (\Z/2^{n+2}M\Z)^{\times}\\ 1\leq a<2^{n+1}M}}\left(\left[\frac{a}{2^{n+2}M}\right]_f^{\sgn(\omega^i)}\omega^i(a)(1+T)^{t_n(a)}+\left[\frac{-a}{2^{n+2}M}\right]_f^{\sgn(\omega^i)}\omega^i(-a)(1+T)^{t_n(-a)}\right)\\
&=\sum_{\substack{a\in (\Z/2^{n+2}M\Z)^{\times}\\ 1\leq a<2^{n+1}M}}(1+\sgn(\omega^i)^2)\left[\frac{a}{2^{n+2}M}\right]_f^{\sgn(\omega^i)}\omega^i(a)(1+T)^{t_n(a)}\\
&=\sum_{\substack{a\in (\Z/2^{n+2}M\Z)^{\times}\\ 1\leq a<2^{n+1}M}}\left(2\left[\frac{a}{2^{n+2}M}\right]_f^{\sgn(\omega^i)}\right)\omega^i(a)(1+T)^{t_n(a)}\in \Lambda_n.
\end{align}
\end{proof}
Let $\pi_{n/n-1}:\Lambda_n\to \Lambda_{n-1}$ be the natural projection. We define the map $\nu_{n-1/n}:\Lambda_{n-1}\to \Lambda_n$ by
\begin{align}
\nu_{n-1/n}(\sigma):=\sum_{\substack{\tau\in\Gamma_n\\ \pi_{n/n-1}(\tau)=\sigma}}\tau
\end{align}
for $\sigma\in \Gamma_{n-1}$. 

%%%%%%%%%%%%%%%%%%%%%%
\subsection{The logarithmic matrix and the sharp/flat $2$-adic $L$-function}\label{ss:2-adic_L-function}
For each positive integer $j$, we put
\begin{align}
C_j:=C_j(f):=\begin{pmatrix}a_2(f)& 1\\ -\Phi_{2^j}(1+T)&0\end{pmatrix},\quad C:=C(f):=\begin{pmatrix}a_2(f)&1\\ -2&0\end{pmatrix},
\end{align}
where $\Phi_{2^j}(1+T)$ is the $2^j$-th cyclotomic polynomial.

Although Sprung formulates his construction only in the case $M=1$, the same argument applies verbatim to any odd integer $M$. Indeed, the Mazur--Tate elements with $M$ still form a queue sequence in the sense of Sprung, that is, the sequence $(\theta_{n,M}(f,\omega^i,T))_{n\geq 0}\in (\Lambda_n)_{n\geq0}$ satisfies
\begin{align}
\pi_{n/n-1}(\theta_{n,M}(f,\omega^i,T))=a_2(f)\theta_{n-1,M}(f,\omega^i,T)-\nu_{n-2/n-1}(\theta_{n-2,M}(f,\omega^i,T))
\end{align} 
for each $n\geq2$ (cf. \cite[(4.2)]{MTT86}). Moreover, the matrix calculation in Sprung's construction involves only the matrices $C_j$, which depend only on $f$ and not on the auxiliary integer $M$. Therefore, Sprung's construction remains valid in this more general setting.
\begin{prop}[{\cite[Corollary 5.6]{Spr17}}]\label{Prop:finite_exist}
For each $n\geq1$, there exists a vector 
\begin{align}
\vec{L}_{2,n,M}^{\omega^i}=(L_{2,n,M}^{\sharp}(f,\omega^i,T),L_{2,n,M}^{\flat}(f,\omega^i,T))\in\Lambda_n^{\oplus2}
\end{align}
such that
\begin{align}\label{eq:exist_finite_sharp/flat}
&(\theta_{n,M}(f,\omega^i,T),\nu_{n-1/n}(\theta_{n-1,M}(f,\omega^i,T)))R_n(\alpha,\beta)\\
&=\vec{L}_{2,n,M}^{\omega^i}C_1\cdots C_nC^{-(n+3)}\begin{pmatrix}-1&-1\\ \beta&\alpha\end{pmatrix},
\end{align}
where
\begin{align}
R_n(\alpha,\beta):=\begin{pmatrix}\alpha^{-(n+2)}&\beta^{-(n+2)}\\ -\alpha^{-(n+3)}&-\beta^{-(n+3)}\end{pmatrix}.
\end{align}
\end{prop}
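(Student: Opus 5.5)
We follow Sprung's argument in \cite[Corollary 5.6]{Spr17}, using only the three-term relation for Mazur--Tate elements stated before the proposition. We first rewrite the target identity in a form that does not involve $\alpha,\beta$. Put $P=\begin{pmatrix}-1&-1\\ \beta&\alpha\end{pmatrix}$. A direct computation shows that $C$ is diagonalised by $P$: $CP=P\,\mathrm{diag}(\alpha,\beta)$, since $\alpha,\beta$ are the roots of $X^2-a_2X+2$. Likewise $R_n(\alpha,\beta)$ factors as
\begin{align}
R_n(\alpha,\beta)=\begin{pmatrix}1&0\\0&-1\end{pmatrix}\begin{pmatrix}1&1\\ \alpha^{-1}&\beta^{-1}\end{pmatrix}\mathrm{diag}(\alpha^{-(n+2)},\beta^{-(n+2)}).
\end{align}
Since $\alpha\neq\beta$, $P$ is invertible. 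Multiplying on the right by $\mathrm{diag}(\alpha^{n+2},\beta^{n+2})$ and using $C^{-(n+3)}P=P\,\mathrm{diag}(\alpha,\beta)^{-(n+3)}$, the identity becomes
\begin{align}
(\theta_{n,M},-\nu_{n-1/n}\theta_{n-1,M})\,Q=\vec L_{2,n,M}^{\omega^i}C_1\cdots C_n C^{-1}P.
\end{align}
Here $Q$ is an explicit matrix with entries in $\Q(\alpha,\beta)$, and after simplification $QP^{-1}C$ should be a matrix with entries in $\Z_2$, possibly up to simple constants. So it suffices to find $\vec L_{2,n,M}^{\omega^i}\in\Lambda_n^{\oplus2}$ with
\begin{align}
(\theta_{n,M},\nu_{n-1/n}\theta_{n-1,M})\,A=\vec L_{2,n,M}^{\omega^i}\,C_1\cdots C_n
\end{align}
in $\Lambda_n^{\oplus 2}$, for an explicit integral matrix $A$. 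The most natural guess is $A=\begin{pmatrix}1&0\\0&-1\end{pmatrix}$ up to a twist by $C$; the exact constant matrix comes out of the bookkeeping above.

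Next we solve $v_n A=\vec L\,C_1\cdots C_n$ in $\Lambda_n$, where $v_n=(\theta_{n,M},\nu_{n-1/n}\theta_{n-1,M})$, by induction on $n$ as in Sprung. Note that $\det C_j=\Phi_{2^j}(1+T)$, and
\begin{align}
\mathrm{adj}(C_j)=\begin{pmatrix}0&-1\\ \Phi_{2^j}(1+T)&a_2\end{pmatrix}.
\end{align}
Sprung's key lemma is the following: a vector $w\in\Lambda_n^{\oplus2}$ lies in $\Lambda_n^{\oplus2}\cdot C_1\cdots C_n$ if and only if $w\,\mathrm{adj}(C_n)\cdots\mathrm{adj}(C_1)$ is divisible by $\prod_{j\le n}\Phi_{2^j}(1+T)$, which modulo $(1+T)^{2^n}-1$ amounts to vanishing conditions modulo each $\Phi_{2^j}$, $j\le n$. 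To verify these conditions we compute $v_n\,\mathrm{adj}(C_n)=(\Phi_{2^n}\nu_{n-1/n}\theta_{n-1,M},\ -\theta_{n,M}+a_2\nu_{n-1/n}\theta_{n-1,M})$. Then we use the queue relation $\pi_{n/n-1}\theta_{n,M}=a_2\theta_{n-1,M}-\nu_{n-2/n-1}\theta_{n-2,M}$, together with the facts that $\nu_{n-1/n}$ is multiplication by $\Phi_{2^n}(1+T)$ on lifts and $\pi\circ\nu$ is multiplication by $2$. Together these show that $v_n\,\mathrm{adj}(C_n)$ is congruent to $\Phi_{2^n}\cdot(\text{lift of } v_{n-1})$ with the coordinates suitably arranged. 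This reduces level $n$ to level $n-1$, and the base cases $n=1$ (and $n=0$) follow from Proposition \ref{Prop:MT_integral}. This is exactly where the remark that only the $C_j$ and the queue relation enter is used, so the presence of $M$ does not matter.

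The main obstacle is the integrality of $\vec L_{2,n,M}^{\omega^i}$ in $\Lambda_n$ rather than in $\Lambda_n\otimes\Q_2$. Dividing by $\prod_j\Phi_{2^j}$ is delicate because these polynomials are not pairwise coprime in $\Z_2[T]$: modulo $2$ they all become powers of $T$. Sprung handles this with his divisibility lemma for queue sequences, and the plan is to cite that lemma, applying it to the queue sequence $(\theta_{n,M}(f,\omega^i,T))_n$, whose integrality is given by Proposition \ref{Prop:MT_integral}. Once $\vec L_{2,n,M}^{\omega^i}$ has been constructed, one reverses the diagonalisation computation of the first paragraph to recover the stated identity with $R_n(\alpha,\beta)$ and $C^{-(n+3)}P$.
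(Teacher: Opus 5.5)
Your outline follows the paper's route. The paper simply invokes Sprung's Corollary~5.6, after observing that $(\theta_{n,M}(f,\omega^i,T))_n$ still satisfies the three-term queue relation and that the $C_j$ do not involve $M$. Your diagonalisation $CP=P\,\mathrm{diag}(\alpha,\beta)$ and your factorisation of $R_n(\alpha,\beta)$ are also correct.

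\textbf{The gap.} The part you actually carry out does not work. You leave the integral matrix $A$ undetermined, guessing something like $\mathrm{diag}(1,-1)$, and then run the inductive step on $v_n=(\theta_{n,M},\nu_{n-1/n}\theta_{n-1,M})$ itself. The second coordinate of $v_n\,\mathrm{adj}(C_n)$ is $-\theta_{n,M}+a_2\nu_{n-1/n}\theta_{n-1,M}$. This is not divisible by $\Phi_{2^n}(1+T)$, since at characters of exact order $2^n$ it equals $-\theta_{n,M}$. So nothing reduces to level $n-1$. Already for $n=1$, membership of $(\theta_{1,M},\pm\nu_{0/1}\theta_{0,M})$ in $\Lambda_1^{\oplus2}C_1$ would force $\theta_{1,M}$ to vanish at the nontrivial character of $\Gamma_1$, which fails in general.

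The bookkeeping you postponed is the actual content. Your own reduction gives $QP^{-1}C=\begin{pmatrix}-a_2&-1\\-1&0\end{pmatrix}$. Hence $A=R_n(\alpha,\beta)P^{-1}C^{n+3}=\begin{pmatrix}-a_2&-1\\1&0\end{pmatrix}$, which is the paper's \eqref{eq:matrix_integral}.

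\textbf{The fix.} With this $A$ the induction closes, and the integrality obstacle disappears. Put $\delta_n:=\prod_{j=1}^n\Phi_{2^j}(1+T)$, so that $(1+T)^{2^n}-1=T\delta_n$.

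Criterion: a vector $w\in\Lambda_n^{\oplus2}$ lies in $\Lambda_n^{\oplus2}C_1\cdots C_n$ if and only if some (equivalently any) lift $\tilde w$ satisfies $\tilde w\,\mathrm{adj}(C_n)\cdots\mathrm{adj}(C_1)\in\delta_n\Z_2[T]^{\oplus2}$. This holds because $\Z_2[T]$ is a domain and $\mathrm{adj}(X)X=\det(X)\,I$. No coprimality of the $\Phi_{2^j}$ is needed.

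Now $A\,\mathrm{adj}(C_n)=\mathrm{diag}(-\Phi_{2^n}(1+T),-1)$. Also $\nu_{n-1/n}\theta_{n-1,M}=\Phi_{2^n}(1+T)\tilde\theta_{n-1,M}$ for any lift. Therefore
\[
(\tilde\theta_{n,M},\Phi_{2^n}(1+T)\tilde\theta_{n-1,M})\,A\,\mathrm{adj}(C_n)=-\Phi_{2^n}(1+T)\,(\tilde\theta_{n,M},\tilde\theta_{n-1,M}).
\]
Cancelling $\Phi_{2^n}(1+T)$ in $\Z_2[T]$ and applying the criterion at level $n-1$, membership of $v_nA$ at level $n$ is equivalent to $(\pi_{n/n-1}\theta_{n,M},\theta_{n-1,M})\in\Lambda_{n-1}^{\oplus2}C_1\cdots C_{n-1}$. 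The queue relation says precisely that $(\pi_{n/n-1}\theta_{n,M},\theta_{n-1,M})=-v_{n-1}A$, so level $n$ follows from level $n-1$.

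The base case is the identity $v_1A=(-\theta_{1,M},-\theta_{0,M})\,C_1$, using $\nu_{0/1}\theta_{0,M}=\Phi_2(1+T)\theta_{0,M}$. Only Proposition~\ref{Prop:MT_integral} and the queue relation enter, which is why the odd integer $M$ plays no role. No separate divisibility lemma of Sprung is needed.
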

\begin{prop}[{\cite[Lemma 4.4]{Spr17}}]\label{Prop:Sprung_Matrix}
All entries of the following matrix
\begin{align}
C_1\cdots C_nC^{-(n+3)}\begin{pmatrix}-1&-1\\ \beta&\alpha\end{pmatrix}
\end{align}
converge on the open unit disc as $n\to \infty$.
\end{prop}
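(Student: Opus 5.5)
We have to prove Proposition \ref{Prop:Sprung_Matrix}: the entries of $C_1\cdots C_nC^{-(n+3)}\begin{pmatrix}-1&-1\\ \beta&\alpha\end{pmatrix}$ converge on the open unit disc.

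The plan is to diagonalize $C$ and write the matrix in question as a telescoping product whose factors tend to the identity fast enough. The eigenvalues of $C=\begin{pmatrix}a_2&1\\-2&0\end{pmatrix}$ are the roots $\alpha,\beta$ of $X^2-a_2X+2$. Indeed $(1,\ \xi-a_2)$, i.e.\ $(1,-2/\xi)$, is a row eigenvector for $\xi$. One checks directly that the columns of $P:=\begin{pmatrix}-1&-1\\ \beta&\alpha\end{pmatrix}$ are right eigenvectors: $C\begin{pmatrix}-1\\ \beta\end{pmatrix}=\begin{pmatrix}-a_2+\beta\\ 2\end{pmatrix}=\begin{pmatrix}-\alpha\\ \alpha\beta\end{pmatrix}=\alpha\begin{pmatrix}-1\\ \beta\end{pmatrix}$, and similarly the second column has eigenvalue $\beta$. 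Hence
\[
C^{-(n+3)}P=P\,\mathrm{diag}(\alpha^{-(n+3)},\beta^{-(n+3)}),
\]
and the matrix equals $C_1\cdots C_nP\,\mathrm{diag}(\alpha^{-(n+3)},\beta^{-(n+3)})$.

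Next, compare consecutive terms. Write $M_n:=C_1\cdots C_nC^{-n}$, so the matrix is $M_nC^{-3}P$ and it suffices to show $M_n$ converges. We have $M_n=M_{n-1}\,C^{\,n-1}(C^{-1}C_n)C^{-(n-1)}$. Since
\[
C_n-C=\begin{pmatrix}0&0\\ 2-\Phi_{2^n}(1+T)&0\end{pmatrix},
\]
we get $C^{-1}C_n=I+C^{-1}(C_n-C)$, and $C^{-1}(C_n-C)$ has entries divisible by $(\Phi_{2^n}(1+T)-2)/2$. Therefore
\[
M_n-M_{n-1}=M_{n-1}\,C^{n-1}C^{-1}(C_n-C)C^{-(n-1)}.
\]

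The estimate I aim for: on a closed disc $|T|_2\le r<1$, $\Phi_{2^n}(1+T)-2=(1+T)^{2^{n-1}}-1$ has sup norm at most $\max(|T|^{2^{n-1}},|2^{n-1}T|)$ roughly, which decays like $2^{-n}$ (and eventually superexponentially in the first term). Conjugation by $C^{n-1}$ costs at most $|\alpha/\beta|$-type factors in the eigenbasis, i.e.\ bounded factors times $|\det C|^{-(n-1)}$ in a crude bound. More precisely, in the eigenbasis the conjugate $C^{n-1}XC^{-(n-1)}$ has entries multiplied by $(\alpha/\beta)^{\pm(n-1)}$ or $1$, and these are units since $\ord_2\alpha=\ord_2\beta$. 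So the conjugation costs only the constant factor from $P$ and $P^{-1}$, which is $|\alpha-\beta|^{-1}$. One must also bound $M_{n-1}$ uniformly. By induction $\|M_n\|\le\|M_{n-1}\|(1+c\,\epsilon_n)$ with $\sum\epsilon_n<\infty$, so the $M_n$ are bounded and the differences are summable. This gives uniform convergence on each closed subdisc, hence convergence of rigid analytic functions on the open unit disc.

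The main obstacle is the exact size of $(1+T)^{2^{n-1}}-1$ on $|T|\le r$, together with the constant loss from $C^{-1}$ and $P^{-1}$ in the norm, which involves $|2|^{-1}$ and $|\alpha-\beta|^{-1}$. The decay $|2^{n-1}|\cdot|T|$ beats any fixed constant, so this should be fine once it is written carefully as a standard binomial estimate. If the eigenbasis bookkeeping becomes messy, I would instead follow Sprung's argument of \cite[Lemma 4.4]{Spr17} verbatim, since the statement is taken from there.
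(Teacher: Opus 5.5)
Your argument is correct. The paper itself gives no proof beyond citing \cite[Lemma 4.4]{Spr17}, and your telescoping estimate works as a self-contained proof, once one identity is fixed.

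The displayed factorization is wrong as written. The expression $M_{n-1}C^{n-1}(C^{-1}C_n)C^{-(n-1)}$ equals $C_1\cdots C_{n-1}C^{-1}C_nC^{-(n-1)}$, which is not $M_n$, because $C$ and $C_n$ do not commute. The correct factorization is $M_n=M_{n-1}C^{n-1}(C_nC^{-1})C^{-(n-1)}$. Moreover $C_nC^{-1}=\mathrm{diag}\bigl(1,\Phi_{2^n}(1+T)/2\bigr)$, so
\begin{align}
M_n-M_{n-1}=M_{n-1}\,C^{n-1}\begin{pmatrix}0&0\\ 0&\frac{(1+T)^{2^{n-1}}-1}{2}\end{pmatrix}C^{-(n-1)}.
\end{align}
Your eigenbasis bound (conjugation by $C^{n-1}$ costs only a constant, since $|\alpha/\beta|_2=1$) applies to this verbatim.

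Your description of the sup norm also needs a small correction. Since $\ord_2\binom{2^{n-1}}{k}=n-1-\ord_2(k)$, the sup norm of $(1+T)^{2^{n-1}}-1$ on $|T|_2\le r$ is $2^{-(n-1)}\max_{0\le j\le n-1}2^jr^{2^j}$. For $r$ close to $1$ this maximum is attained at intermediate terms, not at the two endpoint terms you name. It is nevertheless at most $c_r2^{-n}$ with $c_r:=2\sup_{j\ge0}2^jr^{2^j}<\infty$, and that is all your argument uses.

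You could also drop both the conjugation and the inductive bound on $\|M_{n-1}\|$. Write $M_n-M_{n-1}=C_1\cdots C_{n-1}(C_n-C)C^{-n}$ and estimate each factor:
\begin{itemize}
\item $C_1\cdots C_{n-1}$ has entries in $\Z[T]$, so its norm on $|T|_2\le r$ is at most $1$;
\item $C^{-n}=P\,\mathrm{diag}(\alpha^{-n},\beta^{-n})P^{-1}$ gives $\|C^{-n}\|\le 2^{n/2}|\alpha-\beta|_2^{-1}$;
\item $\|C_n-C\|_r\le c_r2^{-n}$.
\end{itemize}
Hence the differences are bounded by $c_r|\alpha-\beta|_2^{-1}2^{-n/2}$ on $|T|_2\le r$, which already suffices. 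This cruder route only uses that the decay of $(1+T)^{2^{n-1}}-1$ beats the growth $2^{n/2}$ of $C^{-n}$. Your conjugation argument gives the sharper rate $2^{-n}$, at the cost of having to track $\|M_{n-1}\|$.
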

Denote by $\Log(f)$ the limit in Proposition \ref{Prop:Sprung_Matrix}. From \cite[Remark 4.5]{Spr17},
\begin{align}
\det\Log(f)=\frac{\log_2(1+T)}{T}\times \frac{\beta-\alpha}{8}.
\end{align}
Although the vector $\vec{L}^{\omega^i}_{2,n,M}$ in Proposition \ref{Prop:finite_exist} is not unique at finite level, the ambiguity is measured, as in \cite[Definition 5.9]{Spr17}, by
\begin{align}
\mathcal{M}_n(f):=\ker\left(\times C_1\cdots C_nC^{-(n+3)}\begin{pmatrix}-1&-1\\ \beta&\alpha\end{pmatrix}\right)\subset\Lambda_n^{\oplus2}.
\end{align}
This module is independent of the auxiliary odd integer $M$. By \cite[Proposition 5.10]{Spr17}, in the supersingular case considered here we have
\begin{align}
\mathcal{M}(f):=\varprojlim_n\mathcal{M}_n(f)=\{(0,0)\}.
\end{align}
Therefore the ambiguity of the finite-level choices disappears after taking the projective limit. Consequently, Sprung's limiting construction gives a well-defined element of $\Lambda^{\oplus2}$, and we denote it by
\begin{align}
(L_{2,M}^{\sharp}(f,\omega^i,T),L_{2,M}^{\flat}(f,\omega^i,T))\in\Lambda^{\oplus2}.
\end{align}
\begin{thm}[{\cite[Theorem 2.14]{Spr17}}]\label{Thm:exist_sharp_flat}
The elements $L_{2,M}^{\sharp/\flat}(f,\omega^i,T)\in\Lambda$ satisfy
\begin{align}
(L_{2,M}(f,\alpha,\omega^i,T),L_{2,M}(f,\beta,\omega^i,T))=(L_{2,M}^{\sharp}(f,\omega^i,T),L_{2,M}^{\flat}(f,\omega^i,T))\Log(f).
\end{align}
\end{thm}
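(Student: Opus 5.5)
The statement to prove is Theorem \ref{Thm:exist_sharp_flat}: the limits $L^{\sharp/\flat}_{2,M}$ recover $(L_{2,M}(f,\alpha,\omega^i,T),L_{2,M}(f,\beta,\omega^i,T))$ after right multiplication by $\Log(f)$. I would pass to the limit in the finite-level identity of Proposition \ref{Prop:finite_exist}, evaluating both sides at characters of finite order.

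\textbf{Step 1: the left side at finite level.} The left side of \eqref{eq:exist_finite_sharp/flat} is
\[
\theta_{n,M}\alpha^{-(n+2)}-\nu_{n-1/n}(\theta_{n-1,M})\alpha^{-(n+3)}
\]
in the first column, with $\beta$ in place of $\alpha$ in the second. Unwinding the definition of $\mu^{\pm}_{f,\xi,M}$ on the open sets $a+2^{n+2}M\Z_{2,M}$, this is exactly the image in $\Q_2(\xi)[\Gamma_n]$ of the $\omega^i$-part of the measure $\mu_{f,\xi,M}^{\sgn(\omega^i)}$, for $\xi=\alpha,\beta$. Hence, for every wild character $\psi_u$ of conductor dividing $2^{n+2}$, i.e.\ $u=\zeta-1$ with $\zeta^{2^n}=1$, the first column evaluated at $T=u-1$ equals $L_{2,M}(f,\alpha,\omega^i\psi_u)$, and similarly for the second column with $\beta$.

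\textbf{Step 2: the right side.} Write $L^{\sharp}_{2,M},L^{\flat}_{2,M}$ for the projective-limit elements, so that $\vec L_{2,n,M}$ differs from their image in $\Lambda_n^{\oplus 2}$ by an element of $\mathcal M_n(f)$. Such a difference is killed by the matrix $C_1\cdots C_nC^{-(n+3)}\left(\begin{smallmatrix}-1&-1\\ \beta&\alpha\end{smallmatrix}\right)$. Evaluating at $\zeta-1$ with $\zeta$ of order $2^m$, $m\le n$, we have $\Phi_{2^j}(\zeta)\neq 0$ only for $j\ne m$. The factors $C_j$ with $j>n$ act on such $\zeta$ as $C_jC^{-1}$ with $\Phi_{2^j}(\zeta)=2$, so $C_j=C$ there and these factors cancel against $C^{-1}$. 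Hence the finite matrix agrees with $\Log(f)$ at $\zeta-1$. Consequently
\[
(L_{2,M}(f,\alpha,\omega^i\psi_u),\,L_{2,M}(f,\beta,\omega^i\psi_u))=(L^{\sharp}_{2,M},L^{\flat}_{2,M})(\zeta-1)\,\Log(f)(\zeta-1)
\]
for every $2$-power root of unity $\zeta$.

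\textbf{Step 3: from roots of unity to an identity of power series.} Both sides are analytic on the open unit disc. The left side is $O(\log^{1/2})$ by Theorem \ref{Thm:2-adic_L_analytic} and $1/2$-admissibility. On the right, $L^{\sharp/\flat}_{2,M}$ are bounded and each entry of $\Log(f)$ has growth $O(\log^{1/2})$, following Sprung's estimates in Proposition \ref{Prop:Sprung_Matrix}. The difference therefore has growth $o(\log)$ and vanishes at all $\zeta-1$, i.e.\ at all zeros of $\log_2(1+T)$. By the standard Vi\v{s}ik--Amice--V\'elu uniqueness principle, an $o(\log)$ function vanishing there is identically zero, which gives the theorem.

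\textbf{Main obstacle.} The hard part is Step 2: showing that $\Log(f)$ evaluated at $\zeta-1$ coincides with the finite-level matrix. This needs care with the indexing shift between $\Gamma_n$ and the conductor $2^{n+2}$, and with the trivial-character case $m=0$. The growth bound on the entries of $\Log(f)$ is also needed, and I would take it from Sprung's proof of Proposition \ref{Prop:Sprung_Matrix}.
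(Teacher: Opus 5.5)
Your argument is correct, but it does not follow the paper, which gives no proof here. The paper imports Sprung's Theorem 2.14 (proved for $M=1$) and only asserts that Sprung's construction carries over verbatim to odd $M$, because the $\theta_{n,M}(f,\omega^i,T)$ still form a queue sequence and the $C_j$ do not involve $M$.

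You instead verify the identity directly from ingredients the paper does record:
\begin{itemize}
\item the first (resp.\ second) column of $(\theta_{n,M},\nu_{n-1/n}\theta_{n-1,M})R_n(\alpha,\beta)$ is the level-$n$ Riemann sum of $\mu^{\sgn(\omega^i)}_{f,\alpha,M}$ (resp.\ with $\beta$) against $\omega^i(a)(1+T)^{t_n(a)}$;
\item if $\zeta$ has order $2^m$, then $C_j(\zeta-1)=C$ for all $j>m$, so the evaluations of $C_1\cdots C_NC^{-(N+3)}\bigl(\begin{smallmatrix}-1&-1\\ \beta&\alpha\end{smallmatrix}\bigr)$ at $\zeta-1$ are constant for $N\ge m$ and hence equal $\Log(f)(\zeta-1)$;
\item an $o(\log)$ function vanishing at every $\zeta-1$ is zero.
\end{itemize}
This paper \emph{defines} $L^{\sharp/\flat}_{2,M}$ as the Mazur--Tate projective limit, so this is exactly the link that is needed. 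Your argument is also uniform in $M$, so it justifies the paper's ``verbatim'' claim. The citation buys brevity and Sprung's analytic estimates.

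Two points should be made explicit.
\begin{itemize}
\item That $\prj_n(\vec L)$ itself solves the level-$n$ equation is not automatic. Using the queue relation, $\pi_{n+1/n}\circ\nu_{n/n+1}=2$, $\xi^2=a_2\xi-2$ and $\pi_{n+1/n}(C_{n+1})=C$, one checks that $\pi_{n+1/n}(\vec L_{2,n+1,M}^{\omega^i})-\vec L_{2,n,M}^{\omega^i}\in\mathcal M_n(f)$. Exactness of $\varprojlim$ for compact $\Z_2$-modules, together with $\mathcal M(f)=0$, then gives a unique $\vec L$ with $\prj_n(\vec L)-\vec L_{2,n,M}^{\omega^i}\in\mathcal M_n(f)$ for all $n$.
\item The real external input is the $O(\log^{1/2})$ bound on the entries of $\Log(f)$. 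This is not part of Proposition \ref{Prop:Sprung_Matrix} as stated, so it has to be extracted from Sprung's proof. Without some such bound Step 3 fails, since analytic functions agreeing at all $\zeta-1$ may differ by a multiple of $\log_2(1+T)$.
\end{itemize}
The step you call the main obstacle is in fact routine, and $m=0$ is handled by taking $n=1$.

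There are also two minor slips. In Step 1 you mean $u=\zeta$, i.e.\ $T=\zeta-1$. In Step 2 you mean $\Phi_{2^j}(\zeta)=0$ only for $j=m$, with $\Phi_{2^j}(\zeta)=2$ for $j>m$.
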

\begin{prop}\label{Euler_factor_difference}
For a rational prime $\ell$, we put 
\begin{align}
h_{\ell}(f,\omega^i,T):=a_{\ell}(f)-\omega^i(\ell)(1+T)^{t(\ell)}-\varepsilon(\ell)\omega^{-i}(\ell)(1+T)^{-t(\ell)},
\end{align}
where $\varepsilon$ is defined by
\begin{align}
\varepsilon(\ell):=\begin{cases}
1&(\ell\nmid N),\\
0&(\ell\mid N).
\end{cases}
\end{align}
Suppose that $M$ is square-free. Then we have, for a rational prime $\ell\mid M$, 
\begin{align}\label{eq:prime_factor_reduce}
L^{\sharp/\flat}_{2,M}(f,\omega^i,T)=h_{\ell}(f,\omega^i,T)L^{\sharp/\flat}_{2,M/\ell}(f,\omega^i,T).
\end{align}
\end{prop}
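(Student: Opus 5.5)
Since $\Log(f)$ depends only on $f$ and not on $M$, and its determinant $\frac{\log_2(1+T)}{T}\cdot\frac{\beta-\alpha}{8}$ is a nonzero function on the open unit disc, Theorem \ref{Thm:exist_sharp_flat} reduces the statement to the same identity for the Amice--V\'elu--Vi\v{s}ik functions $L_{2,M}(f,\xi,\omega^i,T)$, $\xi\in\{\alpha,\beta\}$. Indeed, if
\begin{align}
L_{2,M}(f,\xi,\omega^i,T)=h_\ell(f,\omega^i,T)\,L_{2,M/\ell}(f,\xi,\omega^i,T)\quad(\xi\in\{\alpha,\beta\}),
\end{align}
then
\begin{align}
\bigl(L^\sharp_{2,M}-h_\ell L^\sharp_{2,M/\ell},\,L^\flat_{2,M}-h_\ell L^\flat_{2,M/\ell}\bigr)\Log(f)=0 .
\end{align}
Multiplying on the right by the adjugate of $\Log(f)$ shows that each entry of the vector, times $\det\Log(f)$, vanishes. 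The ring of analytic functions on the open disc is a domain, so both entries vanish. This gives \eqref{eq:prime_factor_reduce}.

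To prove the identity for the $L_{2,M}(f,\xi,\cdot)$, I would work at the level of measures and use that wild characters are Zariski dense. Two analytic functions on the disc that agree at the infinitely many points $u=\zeta-1$, for $\zeta$ ranging over $2$-power roots of unity, coincide. So it suffices to compare values at characters $\psi=\omega^i\psi_u$ with $\psi_u$ of finite order, regarded first as characters of $\Z_{2,M}^\times$ and then of $\Z_{2,M/\ell}^\times$. Such a $\psi$ is imprimitive at $\ell$, since it factors through $\Z_{2,M/\ell}^\times$ and has conductor $2^\nu$. The interpolation formula \eqref{eq:interpolation_formula} is stated for primitive characters of conductor $2^\nu M$, so I would go back to the measure definition instead. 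Write $\psi=\psi'\circ\mathrm{pr}$ with $\psi'$ a character of $\Z_{2,M/\ell}^\times$. Then
\begin{align}
\mu^{\pm}_{f,\xi,M}(\psi)=\sum_{a \bmod 2^nM,\,(a,2M)=1}\psi(a)\,\mu^{\pm}_{f,\xi,M}(a+2^nM\Z_{2,M}),
\end{align}
and for each residue class $b \bmod 2^nM/\ell$ one sums the modular symbols $\left[\frac{a}{2^nM}\right]^\pm_f$ over the lifts $a\equiv b$ with $\ell\nmid a$.

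The main computation is this sum. The Hecke relation for $T_\ell$ on modular symbols (MTT86, \S4), valid also for $\ell\mid N$ with $\varepsilon(\ell)=0$, gives
\begin{align}
\sum_{\substack{a\equiv b \bmod 2^nM/\ell\\ a \bmod 2^nM}}\left[\tfrac{a}{2^nM}\right]^\pm_f=a_\ell\left[\tfrac{b}{2^nM/\ell}\right]^\pm_f-\varepsilon(\ell)\left[\tfrac{\ell' b}{2^nM/\ell^2}\right]^\pm_f,
\end{align}
where $\ell'$ is a suitable inverse of $\ell$; the sum on the left includes the lift with $\ell\mid a$. Removing that lift, which is $\left[\frac{\ell^{-1}b}{2^nM/\ell}\right]$ after rescaling, contributes the term $-\omega^i(\ell)(1+T)^{t(\ell)}$. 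The $\varepsilon$-term, after reindexing $b\mapsto \ell b$, contributes $-\varepsilon(\ell)\omega^{-i}(\ell)(1+T)^{-t(\ell)}$. Carrying this out for the pair of symbols in $\mu_{f,\xi,M}$, which sit at levels $2^n$ and $2^{n-1}$, gives
\begin{align}
\mu_{f,\xi,M}(\psi)=\bigl(a_\ell-\psi'(\ell)-\varepsilon(\ell)\psi'(\ell)^{-1}\bigr)\,\mu_{f,\xi,M/\ell}(\psi').
\end{align}
With $\psi'(\ell)=\omega^i(\ell)u^{t(\ell)}$, this factor is $h_\ell(f,\omega^i,u-1)$.

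The step I expect to be hardest is bookkeeping the signs and directions here. One must check whether the factor comes out as $\psi(\ell)$ or $\psi^{-1}(\ell)$, and confirm that the $\varepsilon$-term carries the exponent $-t(\ell)$. This depends on the convention $\sigma_a\mapsto(1+T)^{t_n(a)}$ and on how $\ell^{-1}$ acts on the $2$-adic component. I would fix these by doing the computation directly with the Mazur--Tate elements $\theta_{n,M}$. Concretely, I would prove the analogous relation $\pi(\theta_{n,M})=h_\ell\,\theta_{n,M/\ell}$ in $\Lambda_n$, which is MTT86's norm relation for $\ell\nmid 2^n$. As an alternative route, I would note that this relation, combined with the uniqueness $\mathcal{M}(f)=0$ in Sprung's construction (Proposition \ref{Prop:finite_exist}), yields \eqref{eq:prime_factor_reduce} directly, bypassing the analytic argument.
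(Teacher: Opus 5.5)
Your first paragraph is exactly the paper's proof. The paper obtains the identity for $(L_{2,M}(f,\alpha,\omega^i,T),L_{2,M}(f,\beta,\omega^i,T))$ from the calculations in \cite[Lemmas 2.2 and 3.3]{Mat00}, and then cancels $\Log(f)$ using $\det\Log(f)\neq0$. Your sketch of that Amice--V\'elu--Vi\v{s}ik identity is also the right computation. You correctly note that the interpolation formula does not apply to $\omega^i\psi_u$, which has $2$-power conductor, and the factor $a_\ell-\psi'(\ell)-\varepsilon(\ell)\psi'(\ell)^{-1}$ with $\psi'(\ell)=\omega^i(\ell)u^{t(\ell)}$ is correct.

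\textbf{The gap.} The step that upgrades the finite-order computation to an identity of functions fails as stated. Two analytic functions on the open unit disc that agree at every point $T=\zeta-1$ ($\zeta$ a $2$-power root of unity, i.e.\ $u=\zeta$) need not coincide. Indeed $\log_2(1+T)$, the very factor in your $\det\Log(f)$, vanishes at all of these points. The principle holds for bounded functions by Weierstrass preparation, but $L_{2,M}(f,\xi,\omega^i,T)$ is unbounded, so a growth input is indispensable.

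\textbf{The fix: admissibility.} Since $\mu^{\pm}_{f,\xi,M}$ and $\mu^{\pm}_{f,\xi,M/\ell}$ are $1/2$-admissible, both $L_{2,M}(f,\xi,\omega^i,T)$ and $h_\ell(f,\omega^i,T)L_{2,M/\ell}(f,\xi,\omega^i,T)$ are $O(\log_2^{1/2})$; note $h_\ell\in\Z_2\llbracket T\rrbracket$ is bounded. A function of order $<1$ vanishing at every $\zeta-1$ is divisible by $\log_2(1+T)$ in the ring of analytic functions. Comparing growth on circles $|T|=r\to1^-$ then forces the quotient to be $0$, so the difference vanishes.

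Equivalently, in your measure language, the residue-class computation gives
$\mathrm{pr}_*\mu_{f,\xi,M}=a_\ell\mu_{f,\xi,M/\ell}-(\ell\cdot)_*\mu_{f,\xi,M/\ell}-\varepsilon(\ell)(\ell^{-1}\cdot)_*\mu_{f,\xi,M/\ell}$
on step functions of $\Z_{2,M/\ell}^\times$. Vi\v{s}ik's uniqueness of the extension of an $h$-admissible distribution with $h<1$ then gives this identity on all locally analytic functions, in particular on $\omega^i\psi_u$ for every $u$.

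\textbf{Two smaller remarks.}
\begin{enumerate}
\item Your displayed Hecke relation has denominator $2^nM/\ell^2$, which is not an integer since $M$ is square-free. The term $\phi(f,\ell r)$ of $T_\ell$ with $r=b/(2^nM/\ell)$ gives $-\varepsilon(\ell)\bigl[\frac{\ell b}{2^nM/\ell}\bigr]^{\pm}_f$. It is this term that produces $-\varepsilon(\ell)\psi'(\ell)^{-1}$ after reindexing, as you then describe.
\item Your alternative route through $\theta_{n,M}(f,\omega^i,T)=h_\ell\,\theta_{n,M/\ell}(f,\omega^i,T)$ in $\Lambda_n$ is sound, and it sidesteps the growth issue. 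The difference $(L^\sharp_{2,M}-h_\ell L^\sharp_{2,M/\ell},\,L^\flat_{2,M}-h_\ell L^\flat_{2,M/\ell})$ lies in $\Lambda^{\oplus2}$. Its image in $\Lambda_n^{\oplus2}$ lies in $\mathcal{M}_n(f)$ for every $n$: reduce Sprung's limiting identity modulo $(1+T)^{2^n}-1$, and use $\nu_{n-1/n}(h_\ell F)=h_\ell\,\nu_{n-1/n}(F)$. Since $\varprojlim_n\mathcal{M}_n(f)=0$, the difference is zero.
\end{enumerate}
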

\begin{proof}
For a rational prime $\ell\mid M$, using calculations similar to those in \cite[Lemmas 2.2 and 3.3]{Mat00}, we have
\begin{align}
(L_{2,M}(f,\alpha,\omega^i,T),L_{2,M}(f,\beta,\omega^i,T))=h_{\ell}(f,\omega^i,T)(L_{2,M/\ell}(f,\alpha,\omega^i,T),L_{2,M/\ell}(f,\beta,\omega^i,T)).
\end{align}
By Theorem \ref{Thm:exist_sharp_flat} and $\det\Log(f)\neq0$, we obtain
\begin{align}
(L_{2,M}^{\sharp}(f,\omega^i,T),L_{2,M}^{\flat}(f,\omega^i,T))\Log(f)=h_{\ell}(f,\omega^i,T)(L_{2,M/\ell}^{\sharp}(f,\omega^i,T),L_{2,M/\ell}^{\flat}(f,\omega^i,T))\Log(f).
\end{align}
This proves the proposition.
\end{proof}
%%%%%%%%%%%%%%%%%%%%%%

%%%%%%%%%%%%%%%%%%%%%%
\section{Analytic sharp/flat Iwasawa invariants for quadratic twists}
\label{sec:quad_analytic_Iwasawa}
%%%%%%%%%%%%%%%%%%%%%%
In this section, we study the variation of Mazur--Tate elements under quadratic twists and obtain the Matsuno-type formula. Our proof consists of two parts: establishing a finite-layer congruence relation for Mazur--Tate elements and calculating the Iwasawa invariants of the difference between the primitive and imprimitive sharp/flat 2-adic $L$-functions. 

\begin{rem}
To prove Theorem \ref{Main_Thm1}, we adopt an approach based on the study of the $2$-adic properties of Mazur--Tate elements and finite-layer congruences. A related recent work is the paper of Deng--Li \cite{DL26+}, in which Deng announces the work \cite{Den} on the variation of analytic Iwasawa invariants under quadratic twists. Deng kindly explained to the author the relationship between his announced work and the present paper. To the best of the author's knowledge, the two settings are different and neither contains the other.
\end{rem}
%%%%%%%%%%%%%%%%%%%%%%%
\subsection{N\'{e}ron periods for quadratic twists}
\label{ss:Period_twists}
%%%%%%%%%%%%%%%%%%%%%%
The following theorem plays an important role in our proof.
\begin{thm}[Abbes--Ullmo {\cite[Th\'{e}or\`{e}me A]{AU96}}]\label{Thm:AU}
The prime $2$ does not divide $c(A_0)$.
\end{thm}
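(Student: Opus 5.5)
Since this is a published result of Abbes--Ullmo, in the paper we simply invoke \cite[Th\'{e}or\`{e}me A]{AU96}. Their theorem says that a prime $p$ dividing $c(A_0)$ must satisfy $p^2\mid N$. Here the level $N$ is odd, so $2\nmid N$ and the statement follows. For completeness, here is how I would organize a direct argument at $p=2$ with $2\nmid N$, following Mazur's strategy.

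\textbf{Step 1 (integral setup).} Since $2\nmid N$, the modular curve $X_0(N)$ has a smooth proper model over $\Z_{(2)}$. We have $H^0(X_0(N)_{\Z_{(2)}},\Omega^1)\cong S_2(\Gamma_0(N),\Z_{(2)})$. By the $q$-expansion principle, a differential is $2$-integral exactly when its $q$-expansion coefficients are. The modular parametrization $\pi:X_0(N)\to A_0$ factors through $J_0(N)\to A_0$, and by optimality this map has connected kernel. Dually, it gives a morphism $A_0^\vee\to J_0(N)$ which is a closed immersion over $\Q$. Both extend to Néron models over $\Z_{(2)}$. Pulling back the Néron differential $\omega_{A_0}$ gives an integral differential $c(A_0)\cdot 2\pi i f(z)dz$. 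Since $a_1(f)=1$, this already shows $c(A_0)\in\Z_{(2)}$.

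\textbf{Step 2 (reduce to a statement mod $2$).} The claim $2\nmid c(A_0)$ is equivalent to: the pullback $\pi^*\omega_{A_0}$ does not vanish in $H^0(X_0(N)_{\F_2},\Omega^1)$. Equivalently, the map on cotangent spaces at the origin over $\F_2$,
\begin{align}
\mathrm{Cot}(\mathcal{A}_{0,\F_2})\to \mathrm{Cot}(\mathcal{J}_0(N)_{\F_2}),
\end{align}
is injective. Here $\mathcal{A}_0$ and $\mathcal{J}_0(N)$ denote the Néron models. To get this, I would prove that the Néron-model map dual to the optimal quotient, $\mathcal{A}_0^\vee\to\mathcal{J}_0(N)$, is a closed immersion over $\Z_{(2)}$. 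A closed immersion of smooth group schemes is injective on tangent spaces, and by duality (using the Weil pairing and semistability of the relevant models at the good prime $2$) this gives surjectivity on the dual cotangent side.

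\textbf{Step 3 (main obstacle).} The obstacle is extending the closed immersion from the generic fibre to the Néron models. Mazur does this with Raynaud's theorem that such extensions hold when the absolute ramification index satisfies $e<p-1$. For $p=2$ we have $e=1=p-1$, so Raynaud's criterion fails exactly at this prime. My first attempt would be to exploit that $J_0(N)$ and $A_0$ have good reduction at $2$ (since $2\nmid N$), so their Néron models are abelian schemes over $\Z_{(2)}$. The kernel $K$ of the extended morphism is then a finite flat group scheme with trivial generic fibre, because $A_0^\vee\to J_0(N)$ is injective over $\Q$. Any finite flat group scheme over $\Z_{(2)}$ with trivial generic fibre is trivial, by flatness. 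Hence the extended map is a monomorphism of abelian schemes, and therefore a closed immersion. In the good-reduction case this sidesteps Raynaud's bound; in general, Abbes--Ullmo's finer analysis of the Néron models is what handles it. Step 2 then gives $2\nmid c(A_0)$.
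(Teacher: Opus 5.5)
The paper gives no argument of its own here; it simply quotes \cite[Th\'eor\`eme A]{AU96}, which is also your first move. The problem is the direct argument you add, specifically Step 3. The kernel $K$ of the extended homomorphism $\mathcal{A}_0^\vee\to\mathcal{J}_0(N)$ over $\Z_{(2)}$ is finite, being proper and quasi-finite. Nothing, however, makes it flat, and ``trivial generic fibre $\Rightarrow$ trivial'' holds only for flat $K$. This genuinely fails at $2$. Over $\Z_2$, the homomorphism $\Z/2\Z\to\mu_2$, $1\mapsto -1$, is an isomorphism on generic fibres and zero on the special fibre. You can put this inside abelian schemes: take elliptic curves $\mathcal{E}_1,\mathcal{E}_2$ over $\Z_2$ with $\Z/2\Z\subset\mathcal{E}_1[2]$ and $\mu_2\subset\mathcal{E}_2[2]$ (suitable ordinary curves), and let $\Gamma\subset\mathcal{E}_1\times\mathcal{E}_2$ be the graph of this map. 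Then $\mathcal{E}_1\to(\mathcal{E}_1\times\mathcal{E}_2)/\Gamma$ is a homomorphism of abelian schemes that is a closed immersion over $\Q_2$, yet it kills the point of order $2$ on the special fibre. So good reduction does not sidestep Raynaud's hypothesis $e<p-1$. That hypothesis is exactly what Mazur uses to obtain the flatness (uniqueness of prolongations) that you take for granted, and $e=p-1=1$ is precisely where it breaks down. As a sanity check, your Step 3 never uses $p=2$. If it were valid, it would make Raynaud's theorem superfluous in Mazur's argument at every prime of good reduction.

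Any correct direct proof must therefore use something specific to the situation. One option is that $A_0$ is supersingular at $2$, so $A_0[2]$ is an irreducible $G_{\Q_2}$-module with no $\Z/2\Z$ or $\mu_2$ constituent. You can reduce to showing that $\mathcal{A}_0^\vee[2]$ maps isomorphically onto the flat closure of its generic fibre in $\mathcal{J}_0(N)[2]$, and irreducibility is the kind of hypothesis that rules out the degeneration above. Another option is a Hecke-theoretic multiplicity-one input. You should also double-check your paraphrase of the citation. The Mazur--Abbes--Ullmo method is essentially your Steps 1--2 combined with Raynaud's theorem, so it meets the same obstruction at $2$. The result is commonly quoted as $p\mid c\Rightarrow p^2\mid 4N$, which says nothing at $p=2$. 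Oddness of the Manin constant is usually credited to later work: Agashe--Ribet--Stein's multiplicity-one criterion, or \v{C}esnavi\v{c}ius's theorem that $c=1$ for semistable optimal curves, which would cover the square-free-conductor setting of the main results. As written, ``$p\mid c(A_0)\Rightarrow p^2\mid N$'' attributes to \cite{AU96} exactly the case that is in question.
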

Let $\bar{\rho}_{A,2}:G_{\Q}\to \GL(A[2])$ be the residual representation at $2$. 
\begin{prop}\label{Prop:mod_2_surjective}
Let $A$ be an elliptic curve defined over $\Q$ with good supersingular reduction at $2$. Then we have $A(\Q)[2]=\{O\}$ and $\bar{\rho}_{A,2}$ is surjective. 
\end{prop}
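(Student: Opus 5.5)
We must show two things: that $A(\Q)[2]$ is trivial, and that $\bar{\rho}_{A,2}$ is surjective. The plan is to study the reduction of $A$ at $2$ and use the fact that $A$ is supersingular there.

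\emph{Step 1: no rational $2$-torsion.} Let $\tilde{A}$ be the reduction of $A$ modulo $2$. Since $A$ has good supersingular reduction, $\tilde{A}$ has $a_2\equiv 0\bmod 2$, and the Hasse bound $|a_2|\le 2\sqrt2$ forces $a_2\in\{0,\pm2\}$. Hence $\sharp\tilde{A}(\F_2)=3-a_2\in\{1,3,5\}$ is odd. Now suppose $P\in A(\Q)$ is a nontrivial point of order $2$. Its image in $\tilde{A}(\F_2)$ is $2$-torsion, so it must be trivial. Then $P$ lies in the kernel of reduction $A_1(\Q_2)\cong\hat{A}(2\Z_2)$. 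For $p=2$ this formal group has torsion only of order dividing $2$, so a first look does not rule $P$ out.

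To finish I would use supersingularity more directly. For supersingular reduction in characteristic $2$, the multiplication-by-$2$ map on the formal group $\hat{A}$ has height $2$. Its kernel $\hat{A}[2]$ is then all of $A[2]$ (four points), and these points have valuation $1/3$. Concretely, $[2](T)=2T+\dots+uT^4+\dots$ with $u$ a unit and the intermediate coefficients divisible by $2$, so the Newton polygon gives nonzero roots of valuation $1/3$. None of these lie in $2\Z_2$, so $\hat{A}(2\Z_2)[2]=0$ and therefore $A(\Q)[2]=\{O\}$.

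\emph{Step 2: surjectivity.} The same computation shows more. Adjoining a nonzero $2$-torsion point to $\Q_2$ produces an element of valuation $1/3$, so the inertia group at $2$ acts on $A[2]$ through a group whose order is divisible by $3$. Thus the image $G$ of $\bar{\rho}_{A,2}$ in $\GL_2(\F_2)\cong S_3$ has order divisible by $3$, and so contains $A_3$.

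If $G=A_3$, then $G$ acts freely and transitively on the three nonzero points, so no point is fixed. To reach all of $S_3$ I must exclude $G=A_3$, which is the case where the discriminant of the $2$-division cubic is a rational square. For this I would study $\Q_2(A[2])$. The determinant of $\bar{\rho}_{A,2}$ is trivial mod $2$, so that gives no information. Instead I would check that the local image already has order $6$, i.e.\ that $\Q_2(A[2])/\Q_2$ has ramification index $3$ together with nontrivial residue degree. For supersingular reduction at $2$, the Serre tame-inertia theory says inertia acts through the fundamental character of level $2$, $\psi_2:I_t\to\F_4^\times$. The action of Frobenius on $\F_4$ is nontrivial, and this forces the local Galois image to be the normalizer of the nonsplit Cartan subgroup. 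In $\GL_2(\F_2)$ that normalizer is all of $S_3$.

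The main obstacle is making this last step rigorous at $p=2$, where Serre's fundamental character argument has to be handled carefully. If that fails, a fallback is to use the explicit Weierstrass model over $\Z_2$. Supersingular reduction forces $a_1\equiv a_3\equiv\ldots$ with $\tilde{A}: y^2+y=x^3+\dots$, and from this model I would compute the $2$-division cubic and verify directly that its discriminant is not a square in $\Q_2$.
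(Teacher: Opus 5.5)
Your proposal is correct. For surjectivity it is the paper's argument: the paper cites Serre's Proposition 12 to get $\bar{\rho}_{A,2}(G_{\Q_2})\cong N_{\GL_2(\F_2)}(\F_4^{\times})$, a group of order $6$.

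\textbf{Where you differ.} The difference is in the first claim. The paper imports $A(\Q_2)[2]=\{O\}$ from an external result. You prove it directly from the Newton polygon of $[2](T)$: height $2$ forces $[2](T)\equiv g(T^4)\bmod 2$ with $g'(0)$ a unit, so the nonzero roots have valuation $1/3$ and none lies in $2\Z_2$. This is self-contained. It also produces the element of order $3$ in $\bar{\rho}_{A,2}(I_2)$ that Step 2 needs.

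\textbf{The last step is already rigorous.} Your worry about $p=2$ is unnecessary, and the Weierstrass-model fallback can be dropped. The step can be closed directly from your own computation:
\begin{itemize}
\item $\bar{\rho}_{A,2}(I_2)\subset S_3$ has order divisible by $3$.
\item The image of wild inertia is a normal $2$-subgroup of $\bar{\rho}_{A,2}(I_2)$ with cyclic quotient, since tame inertia is procyclic.
\item $S_3$ has no nontrivial normal $2$-subgroup and is not cyclic. Hence $\bar{\rho}_{A,2}(I_2)=A_3$, and wild inertia acts trivially.
\item A Frobenius lift $\phi$ satisfies $\phi\sigma\phi^{-1}=\sigma^2$ on tame inertia. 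So $\bar{\rho}_{A,2}(\phi)$ conjugates an element of order $3$ to its inverse, and cannot lie in the abelian group $A_3$.
\end{itemize}
Equivalently, $\Q_2$ has no totally ramified cyclic cubic extension, because $\mu_3\not\subset\Q_2$.

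\textbf{The two claims are linked.} Once the local image is all of $\GL_2(\F_2)$, no nonzero vector of $A[2]$ is fixed by $G_{\Q_2}$. So $A(\Q_2)[2]=\{O\}$, and hence $A(\Q)[2]=\{O\}$, follows at once. In your write-up the Newton polygon computation does the real work, and the first claim becomes a corollary of the second.
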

\begin{proof}
By \cite[Theorem 1.1 (2)]{OY25+}, $A(\Q_2)[2]=\{O\}$; hence we have $A(\Q)[2]=\{O\}$.

\noindent Using a result of Serre \cite[Proposition 12]{Ser72}, we see that $\bar{\rho}_{A,2}(G_{\Q_2})$ is isomorphic to the normalizer $N_{\GL_2(\F_2)}(\F_4^{\times})$ of $\F_4^{\times}$. Let $\sigma\in\gal(\F_4/\F_2)$ be the Frobenius element. Then we have $\sigma a\sigma^{-1}=a^2$ for $a\in \F_4^{\times}$ and
\begin{align}
\F_4^{\times}\rtimes \gal(\F_4/\F_2)\subset N_{\GL_2(\F_2)}(\F_4^{\times}).
\end{align}
The proposition follows from the fact that the order of $\F_4^{\times}\rtimes \gal(\F_4/\F_2)$ is equal to $6$.
\end{proof}
\begin{prop}\label{Prop:elliptic_modular_2-adic_unit}
For an elliptic curve $A$ over $\Q$ with good supersingular reduction at $2$, we see that $\Omega_A^{\pm}/\Omega_{f_A}^{\pm}$ is an element of $\Z_2$ and 
\begin{align}
\ord_2\left(\frac{\Omega_A^{\pm}}{\Omega_{f_A}^{\pm}}\right)=0.
\end{align}
\end{prop}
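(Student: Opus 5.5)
The plan is to write the ratio as a product of an isogeny factor and a Manin constant, and show that both are $2$-adic units.

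Let $A_0=A_{f_A}$ be the optimal curve in the $\Q$-isogeny class of $A$. By the convention, $\Omega_{f_A}^{\pm}=c(A_0)^{-1}\Omega_{A_0}^{\pm}$, so
\begin{align}
\frac{\Omega_A^{\pm}}{\Omega_{f_A}^{\pm}}=c(A_0)\cdot\frac{\Omega_A^{\pm}}{\Omega_{A_0}^{\pm}}.
\end{align}
Theorem \ref{Thm:AU} says that $c(A_0)$ is an integer prime to $2$. It therefore suffices to show that $\Omega_A^{\pm}/\Omega_{A_0}^{\pm}$ is a rational number prime to $2$.

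The key input is Proposition \ref{Prop:mod_2_surjective}. Since $\bar{\rho}_{A,2}$ is surjective, $A[2]$ is an irreducible $G_\Q$-module. The same holds for every curve in the isogeny class, since each has good supersingular reduction at $2$.

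I claim that $A$ and $A_0$ are linked by a cyclic $\Q$-isogeny of odd degree. Suppose some $\Q$-rational cyclic isogeny has even degree. Its kernel contains a $G_\Q$-stable subgroup of order $2$, which is a rational $2$-torsion point on the source curve. This contradicts $A(\Q)[2]=\{O\}$ from Proposition \ref{Prop:mod_2_surjective} applied to that curve.

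More carefully, any $\Q$-isogeny $\varphi:A_0\to A$ factors as $[m]\circ\psi$ with $\psi$ cyclic. The degree of $\psi$ must be odd by the argument above. Multiplication by $[m]$ does not change the ratio $\Omega_A/\Omega_{A_0}$ up to the factor analysis below, because $[m]^*\omega=m\omega$ and $[m]$ acts on $H_1$ by $m$, so these cancel. Hence we may take $\varphi$ cyclic of odd degree $d$.

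Now compare periods. We have $\varphi^*\omega_A=c\,\omega_{A_0}$ with $c\in\Z$ dividing $d$, since composing with the dual isogeny gives multiplication by $d$. Also $\varphi_*$ maps $H_1(A_0(\C),\Z)^{\pm}$ into $H_1(A(\C),\Z)^{\pm}$ with index dividing $d$.

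The factors $\kappa(A)$ and $\kappa(A_0)$ can differ only by a factor of $2$, and only if the number of connected components of the real points changes. The number of components is determined by whether the real $2$-torsion is all real, that is, by the sign of the discriminant. An odd-degree isogeny induces an isomorphism on $2$-torsion as $G_\R$-modules, so $\kappa(A)=\kappa(A_0)$.

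Putting these together, $\Omega_A^{\pm}/\Omega_{A_0}^{\pm}$ is a rational number whose numerator and denominator divide powers of $d$. This places it in $\Z_2^{\times}$, and the claim follows.

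The main obstacle is tracking the precise rational factors relating the two periods, especially the index of the $\pm$ parts of the lattices and the $\kappa$ factors. These are all controlled by the oddness of $d$: $\varphi$ induces isomorphisms on $2$-primary torsion and on $\Z_2$-lattices $H_1\otimes\Z_2$, compatibly with complex conjugation. One could also argue directly $2$-adically: $\varphi$ gives an isomorphism of $\Z_2$-lattices $H_1(A_0(\C),\Z_2)^{\pm}\cong H_1(A(\C),\Z_2)^{\pm}$ and of $\Z_{(2)}$-Néron differentials, since the isogeny is étale over $\Z_{(2)}$ when its degree is odd. That immediately gives a unit ratio.
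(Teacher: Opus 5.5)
Your proof is correct and follows essentially the same route as the paper. Abbes--Ullmo disposes of $c(A_0)$, Proposition~\ref{Prop:mod_2_surjective} forces an odd-degree $\Q$-isogeny $A_0\to A$, the dual isogeny shows that both the differential scalar and the homology index $m^{\pm}$ divide that odd degree, and one checks $\kappa(A)=\kappa(A_0)$. The only local differences are these: you obtain the odd-degree isogeny as the cyclic part of an arbitrary isogeny (using $A_0(\Q)[2]=\{O\}$) rather than from a minimal-degree modular parametrization, and you prove $\kappa(A)=\kappa(A_0)$ from the $G_{\R}$-equivariant isomorphism $A_0[2]\cong A[2]$ rather than from the vanishing of $H^1(\gal(\C/\R),\ker\varphi_A)$, which is slightly quicker.
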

\begin{proof}
By Theorem \ref{Thm:AU}, we see that
\begin{align}
\ord_2\left(\frac{\Omega_{A_0}^{\pm}}{\Omega_{f_{A_0}}^{\pm}}\right)=\ord_2(c(A_0))=0.
\end{align}
Let $\pi_A:X_0(N)\to A$ be a modular parametrization of minimal degree. Since $A_0$ is optimal, there exists a unique $\Q$-isogeny $\varphi_A:A_0\to A$ such that the following diagram
\[
\xymatrix{
X_0(N)\ar^{\ \ \pi_{A_0}}[r] \ar_{\pi_A}[rd]& A_0\ar[d]^{\varphi_A}\\
& A
}
\]
 is commutative. Then $\varphi_A$ also has minimal degree among all $\Q$-isogenies from $A_0$ to $A$. First, we show that $\deg\varphi_A$ is odd. By Proposition \ref{Prop:mod_2_surjective}, $A_0[2]$ has no non-trivial $G_{\Q}$-stable subgroup. Thus $A_0$ has no rational $2$-isogeny. Suppose that $2$ divides $\deg\varphi_A$. Since $(\ker\varphi_A)[2]=\ker\varphi_A\cap A_0[2]$, $(\ker\varphi_A)[2]$ is a nonzero $G_{\Q}$-stable subgroup of $A_0[2]$; hence we have $(\ker\varphi_A)[2]=A_0[2]$. Then there exists a $\Q$-isogeny $\Psi_A:A_0\to A$ such that $\varphi_A=\Psi_A\circ [2]$. This contradicts the minimality of $\deg\varphi_A$. Therefore we see that $\deg\varphi_A$ is odd. 
 
Since $A$ and $A_0$ have good reduction at $2$, we can extend $\varphi_A$ and $\widehat{\varphi_A}$ to morphisms between N\'{e}ron models over $\Z_2$. Then there exist constants $u_{\varphi_A},u_{\widehat{\varphi_A}}\in\Z_2$ such that 
\begin{align}
\varphi_A^*\omega_A=u_{\varphi_A}\omega_{A_0},\quad (\widehat{\varphi_A})^*\omega_{A_0}=u_{\widehat{\varphi_A}}\omega_A.
\end{align} 
Denoting by $\widehat{\varphi_A}$ the dual isogeny of $\varphi_A$, we see that 
 \begin{align}
 [\deg\varphi_A]^*\omega_{A_0}=(\widehat{\varphi_A}\circ \varphi_A)^*\omega_{A_0}=\varphi^*_A(\widehat{\varphi_A}^*\omega_{A_0})=\varphi_A^*(u_{\widehat{\varphi_A}}\omega_A)=u_{\widehat{\varphi_A}}u_{\varphi_A}\omega_{A_0};
 \end{align} 
hence we obtain $u_{\widehat{\varphi_A}}u_{\varphi_A}=\deg\varphi_A$. Combining these facts, it follows that $u_{\varphi_A}$ is a 2-adic unit. 

The push-forwards $(\varphi_A)_*:H_1(A_0(\C),\Z)^{\pm}\to H_1(A(\C),\Z)^{\pm}$ are injective and they have finite image index. We put
\begin{align}
m^{\pm}(\varphi_A):=[H_1(A(\C),\Z)^{\pm}:(\varphi_A)_*(H_1(A_0(\C),\Z)^{\pm})].
\end{align}
Then there exists $\varepsilon^{\pm}=\varepsilon_A^{\pm}\in\{\pm1\}$ such that
\begin{align}\label{eq:phi_A_delta}
(\varphi_A)_*(\delta_{A_0}^{\pm})=\varepsilon^{\pm}m^{\pm}(\varphi_A)\delta_A^{\pm}.
\end{align}
Similarly, putting 
\begin{align}
m^{\pm}(\widehat{\varphi_A}):=[H_1(A_0(\C),\Z)^{\pm}:(\widehat{\varphi_A})_*(H_1(A(\C),\Z)^{\pm})],
\end{align}
there exists $\eta^{\pm}\in\{\pm1\}$ such that
\begin{align}\label{eq:phi_A_delta_dual}
(\widehat{\varphi_A})_*(\delta_{A}^{\pm})=\eta^{\pm}m^{\pm}(\widehat{\varphi_A})\delta_{A_0}^{\pm}.
\end{align}
By \eqref{eq:phi_A_delta} and \eqref{eq:phi_A_delta_dual},
\begin{align}
(\deg\varphi_A)\delta_{A_0}^{\pm}=([\deg\varphi_A])_*(\delta_{A_0}^{\pm})=(\widehat{\varphi_A}\circ \varphi_A)_*(\delta_{A_0}^{\pm})=\varepsilon^{\pm}\eta^{\pm}m^{\pm}(\varphi_A)m^{\pm}(\widehat{\varphi_A})\delta_{A_0}^{\pm}.
\end{align}
Thus $m^{\pm}(\varphi_A)\mid \deg\varphi_A$; hence we see that $2\nmid m^{\pm}(\varphi_A)$.

Next, we show that $\kappa(A)=\kappa(A_0)$.
Let $A(\R)^0$, $A_0(\R)^0$ be the connected components of the identity for $A(\R)$, $A_0(\R)$, respectively. We show that $\varphi_A$ induces the isomorphism
\begin{align}
\overline{\varphi_A}:A_0(\R)/A_0(\R)^0\overset{\sim}{\longrightarrow}A(\R)/A(\R)^0.
\end{align}
It suffices to prove the injectivity. Taking Galois cohomology of the following exact sequence of $\gal(\C/\R)$-modules
\[
\xymatrix{
0\ar[r] & X:=\ker\varphi_A\ar[r] &A_0(\C)\ar[r] & A(\C)\ar[r]&0,
}
\]
we have
\[
\xymatrix{
A_0(\R)\ar^{\varphi_A}[r] & A(\R)\ar[r] & H^1(\gal(\C/\R),X).
}
\]
Since $\sharp X=\deg\varphi_A$ and $\deg\varphi_A$ is odd, we see that $H^1(\gal(\C/\R),X)$ is trivial. Thus $\varphi_A$ is surjective. By $A(\R)^0\cong S^1$ and $\varphi_A(A_0(\R)^0)\subset A(\R)^0$, we have $\varphi_A(A_0(\R)^0)=A(\R)^0$. For each $x+A_0(\R)^0\in \ker\overline{\varphi_A}$ with $x\in A_0(\R)$, $\varphi(x)\in A(\R)^0$. Then there exists $y\in A_0(\R)^0$ such that $\varphi_A(y)=\varphi_A(x)$. Then we obtain $x-y\in X(\R):=X^{\gal(\C/\R)}$. Since $\sharp X(\R)$ is odd and $\sharp A_0(\R)/A_0(\R)^0$ is $1$ or $2$, we see that $X(\R)\subset A_0(\R)^0$. This implies that $x-y\in A_0(\R)^0$. Thus we obtain
\begin{align}
x=y+(x-y)\in A_0(\R)^0.
\end{align}
This proves that $\overline{\varphi_A}$ is an isomorphism. Therefore we have
\begin{align}
u_{\varphi_A}\Omega_{A_0}^{\pm}&=\kappa(A_0)\int_{\delta_{A_0}^{\pm}}u_{\varphi_A}\omega_{A_0}\\
&=\kappa(A)\int_{\delta_{A_0}^{\pm}}\varphi_A^*\omega_A\\
&=\kappa(A)\int_{(\varphi_A)_*(\delta_{A_0}^{\pm})}\omega_A\\
&=\kappa(A)\int_{\varepsilon^{\pm}m^{\pm}(\varphi_A)\delta_A^{\pm}}\omega_A\\
&=\varepsilon^{\pm}m^{\pm}(\varphi_A)\kappa(A)\int_{\delta_A^{\pm}}\omega_A\\
&=\varepsilon^{\pm}m^{\pm}(\varphi_A)\Omega_A^{\pm}.
\end{align}
Then we obtain
\begin{align}\label{eq:elliptic_modular_ratio_differ}
\frac{\Omega_A^{\pm}}{\Omega_{f_A}^{\pm}}=\frac{\Omega_A^{\pm}}{c(A_0)^{-1}\Omega_{A_0}^{\pm}}=\frac{u_{\varphi_A}c(A_0)}{\varepsilon^{\pm}m^{\pm}(\varphi_A)},
\end{align}
and
\begin{align}
\ord_2\left(\frac{\Omega_A^{\pm}}{\Omega_{f_A}^{\pm}}\right)=0.
\end{align}
\end{proof}
Let $\psi_D:E^D\overset{\sim}{\to}E$ be an $\R$-isomorphism arising from the quadratic twist, and let $(\psi_D)_*:H_1(E^D(\C),\Z)\to H_1(E(\C),\Z)$ be the isomorphism induced by $\psi_D$.
\begin{lem}\label{Lem:Homology_action}
For each $s\in \{\pm\}$, we have
\begin{align}\label{eq:homology_twist}
(\psi_D)_*(H_1(E^D(\C),\Z)^s)=H_1(E(\C),\Z)^s.
\end{align}
In particular, there exists $\varepsilon_s\in\{\pm1\}$ such that
\begin{align}\label{eq:homology_generaror_eigen}
(\psi_D)_*\delta_{E^D}^s=\varepsilon_s\delta_E^s.
\end{align}
\end{lem}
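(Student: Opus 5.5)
The key point is that $\psi_D$ is defined over $\R$, so $(\psi_D)_*$ commutes with complex conjugation. Recall that the $\pm$-parts of $H_1(\cdot(\C),\Z)$ are taken with respect to the involution induced by complex conjugation $c$ acting on the complex points. Once equivariance is established, the eigenspace decomposition is preserved and the rest follows formally.

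First, since $D>0$, the field $\Q(\sqrt{D})$ is real, so $\sqrt{D}\in\R$. Write $E:y^2=x^3+ax+b$ (or use a general Weierstrass model), so that $E^D:Dy^2=x^3+ax+b$, or equivalently $y^2=x^3+aD^2x+bD^3$. The twisting isomorphism $E^D\to E$ is given by $(x,y)\mapsto(x/D,\,y/D^{3/2})$ up to the choice of model. Its coefficients lie in $\Q(\sqrt{D})\subset\R$, so $\psi_D$ is an $\R$-isomorphism, as stated in the paper.

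Next, denote by $c_A$ the homeomorphism $A(\C)\to A(\C)$ induced by complex conjugation. Because $\psi_D$ is defined over $\R$, we have $\psi_D\circ c_{E^D}=c_E\circ\psi_D$ on $E^D(\C)$. Applying $H_1(-,\Z)$ and functoriality, we get
\[
(\psi_D)_*\circ (c_{E^D})_*=(c_E)_*\circ(\psi_D)_*.
\]
It follows that $(\psi_D)_*$ maps $H_1(E^D(\C),\Z)^s$, the $s1$-eigenspace of $(c_{E^D})_*$, into $H_1(E(\C),\Z)^s$. The same argument applies to $\psi_D^{-1}$, which is also defined over $\R$, and gives the reverse inclusion. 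This proves \eqref{eq:homology_twist}.

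Finally, $(\psi_D)_*$ is an isomorphism of $\Z$-modules, and each $H_1(\cdot,\Z)^s$ is free of rank one with generator $\delta^s$. Hence $(\psi_D)_*\delta_{E^D}^s$ generates $H_1(E(\C),\Z)^s=\Z\delta_E^s$, so it equals $\varepsilon_s\delta_E^s$ for some unit $\varepsilon_s\in\Z^\times=\{\pm1\}$, which gives \eqref{eq:homology_generaror_eigen}.

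There is no real obstacle. The only point needing care is that $\psi_D$ is defined over $\R$, and this is exactly where the hypothesis $D>0$ is used. For $D<0$ one would instead get $\psi_D\circ c=-\,c\circ\psi_D$ up to the automorphism $[-1]$, and the $\pm$-parts would be interchanged.
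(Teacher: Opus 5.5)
Your proof is correct and follows essentially the same route as the paper: because $\psi_D$ is defined over $\R$, you get $(\psi_D)_*(c_{E^D})_*=(c_E)_*(\psi_D)_*$, so the $s$-eigenspaces are preserved, and since $H_1(\cdot,\Z)^s$ is free of rank one you get $\varepsilon_s\in\{\pm1\}$. The only cosmetic difference is in the reverse inclusion: you apply the same argument to $\psi_D^{-1}$, whereas the paper takes a preimage of $\eta\in H_1(E(\C),\Z)^s$ and uses injectivity of $(\psi_D)_*$ to show that this preimage lies in the $s$-eigenspace.
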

\begin{proof}
For an elliptic curve $A$ defined over $\Q$, let $c_A:A(\C)\to A(\C)$ be the complex conjugation and $(c_A)_*:H_1(A(\C),\Z)\to H_1(A(\C),\Z)$ be the morphism induced by $c_A$. Since $\psi_D\circ c_{E^D}=c_E\circ \psi_D$, on the homology group, we have
\begin{align}
(c_E)_*(\psi_D)_*=(\psi_D)_*(c_{E^D})_*.
\end{align}
For $s\in\{\pm\}$ and $\gamma\in H_1(E^D(\C),\Z)^s$, we have
\begin{align}
(c_E)_*(\psi_D)_*\gamma=(\psi_D)_*(c_{E^D})_*\gamma=(\psi_D)_*(s\gamma)=s(\psi_D)_*\gamma.
\end{align}
Then we obtain $(\psi_D)_*\gamma\in H_1(E(\C),\Z)^s$; hence $(\psi_D)_*(H_1(E^D(\C),\Z)^s)\subset H_1(E(\C),\Z)^s$. For $\eta\in H_1(E(\C),\Z)^s$, since $(\psi_D)_*$ is an isomorphism, there exists $\gamma\in H_1(E^D(\C),\Z)$ such that $\eta=(\psi_D)_*\gamma$. From the calculation
\begin{align}
(\psi_D)_*(c_{E^D})_*\gamma=(c_E)_*(\psi_D)_*\gamma=(c_E)_*\eta=s\eta=s(\psi_D)_*\gamma,
\end{align}
we see that 
\begin{align}
(\psi_D)_*((c_{E^D})_*\gamma-s\gamma)=0.
\end{align}
Then we have $(c_{E^D})_*\gamma=s\gamma$; hence we obtain $\gamma\in H_1(E^D(\C),\Z)^s$. Therefore
\begin{align}
\eta=(\psi_D)_*\gamma\in (\psi_D)_*(H_1(E^D(\C),\Z)^s);
\end{align}
hence we derive \eqref{eq:homology_twist}. The latter assertion follows from the fact that $H_1(A(\C),\Z)^{\pm}$ is a free $\Z$-module of rank one.
\end{proof}
Using Lemma \ref{Lem:Homology_action}, we study the ratio of N\'{e}ron periods.
\begin{prop}\label{Prop:periods_quadratic_twist}
Let $E$ be an elliptic curve over $\Q$ with good supersingular reduction at $2$. If $D$ is a positive square-free integer such that $D\equiv 1\bmod 4$, then we have 
\begin{align}\label{eq:ration_2-adic}
\frac{\Omega_{E^D}^{\pm}}{\Omega_E^{\pm}}=\frac{\tilde{u}\varepsilon_{\pm}}{\sqrt{D}}.
\end{align}
\end{prop}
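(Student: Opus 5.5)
The statement leaves $\tilde{u}$ implicit. The proof should show that $\tilde{u}$ is a $2$-adic unit (in fact a rational number prime to $2$ up to sign), so that \eqref{eq:ration_2-adic} determines the ratio up to a $2$-adic unit. The plan is to compare Néron differentials and homology generators directly through the twisting isomorphism $\psi_D$.

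\emph{Step 1: pulling back the differential.} Write $E:y^2=x^3+a_4x+a_6$ over $\Q$ (after clearing a generalized Weierstrass model). Take the twist $E^D:Dy^2=x^3+a_4x+a_6$, equivalently $y^2=x^3+D^2a_4x+D^3a_6$. The isomorphism $\psi_D$ is $(x,y)\mapsto(x/D,y/D^{3/2})$, which is defined over $\Q(\sqrt D)\subset\R$ because $D>0$. Then
\begin{align}
\psi_D^*\frac{dx}{2y}=\frac{1}{\sqrt D}\,\frac{dx}{2y}.
\end{align}
The Néron differentials are $\omega_E=u_E^{-1}\,dx/2y$ and $\omega_{E^D}=u_{E^D}^{-1}\,dx/2y$ for suitable $u_E,u_{E^D}\in\Q^\times$ relating the chosen models to minimal ones. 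Hence $\psi_D^*\omega_E=\tilde{u}\,D^{-1/2}\,\omega_{E^D}$ with $\tilde u\in\Q^\times$.

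\emph{Step 2: $\tilde u$ is a $2$-adic unit. This is the main obstacle.} Because $D\equiv1\bmod4$, the extension $\Q_2(\sqrt D)/\Q_2$ is unramified. Since $E$ has good reduction at $2$, the twist $E^D$ also has good reduction at $2$, and over $\Z_2[\sqrt D]$ the isomorphism $\psi_D$ extends to an isomorphism of Néron (smooth, since good reduction) models. Néron models commute with unramified base change, so $\psi_D^*\omega_E$ is a unit multiple of $\omega_{E^D}$ in $\Z_2[\sqrt D]^\times$. Comparing with Step 1, $\tilde u/\sqrt D$ is a unit at $2$. Since $\sqrt D$ is itself a $2$-adic unit, $\ord_2(\tilde u)=0$.

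One could instead avoid this Néron model argument by working with the explicit $2$-minimal Weierstrass models (the change of variables $x\mapsto Dx$, $y\mapsto D^{3/2}y$ has unit scaling at $2$). The only delicate point there is the $a_1,a_3$ terms, which is why I would use the base-change argument.

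\emph{Step 3: periods.} By Lemma~\ref{Lem:Homology_action}, $(\psi_D)_*\delta^\pm_{E^D}=\varepsilon_\pm\delta^\pm_E$. Moreover $\kappa(E^D)=\kappa(E)$, because $\psi_D$ is an $\R$-isomorphism and so identifies $E^D(\R)$ with $E(\R)$. Therefore
\begin{align}
\Omega^\pm_{E^D}=\kappa(E^D)\int_{\delta^\pm_{E^D}}\omega_{E^D}
=\kappa(E)\,\frac{\sqrt D}{\tilde u}\int_{\delta^\pm_{E^D}}\psi_D^*\omega_E
=\frac{\sqrt D}{\tilde u}\,\varepsilon_\pm\,\Omega^\pm_E .
\end{align}
Renaming $\tilde u$ as $\tilde u^{-1}D$ (still a $2$-adic unit, since $D$ is odd) brings this to the stated shape $\tilde u\varepsilon_\pm/\sqrt D$. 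I would fix the normalization of $\tilde u$ so that it matches \eqref{eq:ration_2-adic} exactly.
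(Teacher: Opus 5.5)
Your proof is correct. Your Step~3 is exactly the paper's computation: Lemma~\ref{Lem:Homology_action} together with $\kappa(E^D)=\kappa(E)$, which comes from the $\R$-isomorphism.

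The real difference is how you obtain the key input. The paper quotes \cite[Proposition 2.5 and Lemma 3.1]{Pal12} for the existence of $\tilde u\in\Z_2^{\times}$ with $\tilde u\,\psi_D^*\omega_E=\sqrt{D}\,\omega_{E^D}$. You prove it in two parts:
\begin{itemize}
\item You get rationality of the constant from the explicit change of variables. This is what places $\tilde u$ in $\Z_2^{\times}$ rather than merely in $\mathcal{O}^{\times}$, where $\mathcal{O}$ is the ring of integers of $\Q_2(\sqrt{D})$. That matters later, for $U_{n,\chi}\in\Lambda_n^{\times}$.
\item You get the unit property from the N\'{e}ron mapping property over the unramified extension, where the good-reduction model remains the N\'{e}ron model.
\end{itemize}
Your route is self-contained and conceptual. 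It sidesteps the $a_1,a_3$ bookkeeping at $2$ that an explicit minimal-model argument must handle; the paper instead outsources that work to the citation.

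There are two slips, neither affecting the conclusion. First, the pullback is $\psi_D^*(dx/2y)=\sqrt{D}\,dx/2y$, not $D^{-1/2}\,dx/2y$. With $\tilde u=u_E/u_{E^D}$ you land directly on the paper's normalization, and no renaming is needed. Second, the base ring in Step~2 should be $\mathcal{O}=\Z_2[(1+\sqrt{D})/2]$. For $D\equiv 5\bmod 8$, the ring $\Z_2[\sqrt{D}]$ is a non-maximal order, not a DVR, so N\'{e}ron-model theory does not apply to it directly. Both slips are harmless because $D$ is odd and every rational number in $\mathcal{O}^{\times}$ is a $2$-adic unit.
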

\begin{proof}
Since $D\equiv1\bmod 4$, by \cite[Proposition 2.5 and Lemma 3.1]{Pal12}, there exists a constant $\tilde{u}\in\Z_2^{\times}$ such that
\begin{align}
\tilde{u}\psi_D^*(\omega_E)=\sqrt{D}\omega_{E^D}.
\end{align}
By the condition $D>0$, we have an $\R$-isomorphism $E\cong E^D$; hence we see that $\kappa(E)=\kappa(E^D)$. Therefore we obtain
\begin{align}
\Omega_{E^D}^{\pm}&=\kappa(E^D)\int_{\delta_{E^D}^{\pm}}\omega_{E^D}\\
&=\kappa(E)\int_{\delta_{E^D}^{\pm}}\frac{\tilde{u}}{\sqrt{D}}\psi_D^*(\omega_E)\\
&=\frac{\tilde{u}\kappa(E)}{\sqrt{D}}\int_{(\psi_D)_*\delta_{E^D}^{\pm}}\omega_E\\
&\overset{\eqref{eq:homology_generaror_eigen}}{=}\frac{\tilde{u}\kappa(E)}{\sqrt{D}}\int_{\varepsilon_{\pm}\delta_E^{\pm}}\omega_E=\frac{\tilde{u}\varepsilon_{\pm}}{\sqrt{D}}\Omega_E^{\pm},
\end{align}
which completes the proof.
\end{proof}
\begin{exa}
Let $E_1$ be the elliptic curve over $\Q$ with LMFDB label 11.a2 defined by
\begin{align}
y^2+y=x^3-x^2-10x-20.
\end{align}
By SageMath \cite{Sage}, we have
\begin{align}
\frac{\Omega_{E_1^{13}}^{\pm}}{\Omega_{E_1}^{\pm}}=\frac{1}{\sqrt{13}},\quad \frac{\Omega_{E_1^{29}}^{\pm}}{\Omega_{E_1}^{\pm}}=\frac{1}{\sqrt{29}}.
\end{align}
\end{exa}
\begin{exa}
Let $E_2$ be the elliptic curve over $\Q$ with LMFDB label 19.a3 defined by
\begin{align}
y^2+y=x^3+x^2+x.
\end{align}
By SageMath \cite{Sage}, we have
\begin{align}
\frac{\Omega_{E_2^{17}}^{\pm}}{\Omega_{E_2}^{\pm}}=\frac{1}{\sqrt{17}},\quad \frac{\Omega_{E_2^{97}}^{\pm}}{\Omega_{E_2}^{\pm}}=\frac{1}{\sqrt{97}}.
\end{align}
\end{exa}

%%%%%%%%%%%%%%%%%%%%%%
\subsection{Mazur--Tate elements for quadratic twists of modular forms}\label{ss:MT_quadratic}
%%%%%%%%%%%%%%%%%%%%%%
We take $f=f_E$, where $E$ is an elliptic curve over $\Q$ with good supersingular reduction at $2$ and square-free conductor, and we use the periods normalized as in Subsection \ref{ss:Period_twists}. Let $\chi$ be the primitive quadratic character associated with $\Q(\sqrt{D})$. Using results of Atkin--Li \cite{AL78}, we prove that the coefficientwise twist $f_{\chi}=\sum_{n\geq1}a_n(f)\chi(n)q^n$ of $f$ by $\chi$ coincides with $f\otimes\chi:=f_{E^D}$.
\begin{prop}\label{Prop:primitive_twist_coefficient}
Let $E$ be an elliptic curve defined over $\Q$ with square-free conductor $N$ and let $D>0$ be a square-free integer with $D\equiv1\bmod 4$. Then we see that $f_{\chi}=f\otimes\chi$.
\end{prop}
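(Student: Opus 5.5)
The plan is to compare $f_\chi$ and $f_{E^D}$ as newforms. I would show that $f_\chi$ is itself a newform, of the level $N'$ of $E^D$, and that its Hecke eigenvalues agree with those of $f_{E^D}$ at almost all primes. Multiplicity one (strong multiplicity one for newforms) then forces $f_\chi=f\otimes\chi$.

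\textbf{Eigenvalues away from $ND$.} For primes $p\nmid ND$, the curve $E^D$ has good reduction at $p$. Counting points gives $a_p(E^D)=\chi(p)a_p(E)$, because the twist by $\chi$ replaces the Frobenius eigenvalues by their products with $\chi(p)$. Since $D\equiv 1\bmod 4$ and $D$ is square-free, $\chi$ has conductor exactly $D$, so $\chi(p)=\left(\frac{D}{p}\right)$ (Kronecker symbol) for these $p$. By modularity, $f_{E^D}$ is the newform attached to $E^D$, and hence $a_p(f_{E^D})=\chi(p)a_p(f)=a_p(f_\chi)$ for all $p\nmid ND$.

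\textbf{Newness of $f_\chi$.} The coefficientwise twist $f_\chi$ is always a Hecke eigenform on $\Gamma_0(\mathrm{lcm}(N,D^2))$, but it need not be new there: its $q$-expansion vanishes at the indices divisible by primes dividing $D$. I would invoke Atkin--Li \cite{AL78}, Theorem 3.1 and its corollaries, which say the following. If $f$ is a newform of level $N$ and $\chi$ is primitive of conductor $D$, then $f_\chi$ is a newform of level $\mathrm{lcm}(N,D^2)$ as soon as, for each prime $q\mid D$, the $q$-primary component of $f$ is not itself a twist that $\chi_q$ can "undo". The local cases are as follows.
\begin{itemize}
\item For $q\mid D$ with $q\nmid N$, $f$ is unramified at $q$, so the twist is new at $q$ with level $q^2$ and $a_q(f_\chi)=0$, which matches the additive reduction of $E^D$ at $q$.
\item For $q\mid D$ with $q\parallel N$, $f$ is Steinberg at $q$ (multiplicative reduction of $E$). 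Twisting by the ramified quadratic character $\chi_q$ gives conductor $q^2$, and Atkin--Li's Theorem 3.1 with $q^2\nmid N$ ensures that $f_\chi$ is new at $q$ with $a_q(f_\chi)=0$.
\item For $q\mid N$ with $q\nmid D$, nothing changes locally, and $a_q(f_\chi)=\chi(q)a_q(f)$.
\end{itemize}
Together these give that $f_\chi$ is a newform of level $N':=\mathrm{lcm}(N,D^2)$, which equals the conductor of $E^D$. Here the hypothesis that $N$ is square-free is exactly what excludes the problematic cases in Atkin--Li, namely those where $q^2\mid N$ and $f$ could already be a twist.

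\textbf{Conclusion.} Both $f_\chi$ and $f_{E^D}$ are normalized newforms and their $a_p$ agree for all $p\nmid ND$. Strong multiplicity one then gives $f_\chi=f_{E^D}=f\otimes\chi$. In this argument one only needs to know that $f_\chi$ is new, and not to compute its level separately, because newforms with the same $a_p$ at almost all $p$ must be equal and have the same level.

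The main obstacle is the second step: checking carefully that the Atkin--Li hypotheses hold at primes $q\mid (N,D)$. I would first try to cite their Theorem 3.1 directly, verifying that $f$ has trivial nebentypus and $q\parallel N$. As a fallback, I would argue through local representations: the Steinberg representation twisted by a ramified quadratic character is $\chi_q\mathrm{St}$, of conductor $q^2$, and it has no unramified twist. This shows the newform of $E^D$ has $q$-coefficient $0$, so the coefficients of $f_\chi$ coincide with those of $f_{E^D}$ at $q$ as well.
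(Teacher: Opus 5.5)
Your proposal is correct and follows essentially the paper's route: use Atkin--Li to see that $f_\chi$ is a newform, new of level $q^2$ at each $q\mid D$ whether $q\nmid N$ or $q\parallel N$; check that $a_p(f_\chi)=a_p(f_{E^D})$ for $p\nmid ND$; and conclude by strong multiplicity one. The only difference is organizational: the paper twists by one $\chi_{\ell_j}$ at a time by induction, citing Atkin--Li p.~228 when $\ell_j\nmid N$ and Corollary~4.1 with Theorem~4.1 when $\ell_j\mid N$, whereas you do the local analysis at all $q\mid D$ simultaneously.
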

\begin{proof}
Write $D=\ell_1\cdots \ell_r$ and 
\begin{align}
\chi=\prod_{j=1}^r\chi_{\ell_j},
\end{align}
where $\chi_{\ell_j}$ has conductor $\ell_j$. Set
\begin{align}
f_0:=f_E,\quad f_j:=(f_{j-1})_{\chi_{\ell_j}}\ (1\leq j\leq r). 
\end{align}
We show inductively that each $f_j$ is a newform with trivial nebentypus. Suppose that $f_{j-1}$ is a newform of level $N_{j-1}$ with trivial nebentypus. First suppose that $\ell_j\nmid N$. Since $(\ell_j,N_{j-1})=1$, a result of \cite[p.228]{AL78} implies that $f_j$ is a newform of level $N_{j-1}\ell_j^2$ with nebentypus $\chi_{\ell_j}^2=1$. In the case of $\ell_j\mid N$, using \cite[Corollary 4.1]{AL78}, we see that $f_j$ is a newform of level
\begin{align}
\max\{\ell_j,\ell_j^2\}\times\frac{N_{j-1}}{\ell_j}=\ell_jN_{j-1},
\end{align}
and that its nebentypus is $\chi_{\ell_j}^2=1$ by \cite[Theorem 4.1]{AL78}. Consequently, $f_{\chi}=f_r$ is a newform. Since $a_p(f_{\chi})=a_p(f\otimes\chi)$ for each prime $p\nmid ND$, the strong multiplicity one theorem (cf. \cite[Theorem 4.6.19]{Miy06}) implies that $f_{\chi}=f\otimes\chi$.
\end{proof}
By Proposition \ref{Prop:primitive_twist_coefficient}, it suffices to study the congruence relation between $f_{\chi}$ and $f$.
\begin{lem}
For $b\in (\Z/2^{n+2}\Z)^{\times}$, we have
\begin{align}\label{eq:modular_symbol_twist}
\left[\frac{b}{2^{n+2}}\right]_{f\otimes \chi}^{\sgn(\omega^i)}=\left(\frac{\Omega_f^{\sgn(\omega^i)}}{\Omega_{f\otimes\chi}^{\sgn(\omega^i)}}\right)\frac{1}{\tau(\chi)}\sum_{a\in \Z/D\Z}\chi(a)\left[\frac{bD+2^{n+2}a}{2^{n+2}D}\right]_f^{\sgn(\omega^i)}.
\end{align}
\end{lem}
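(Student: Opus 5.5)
We prove the identity by the standard Birch twisting argument. We expand $f_\chi$ through Gauss sums and then substitute into the period integrals defining $[\,\cdot\,]^{\pm}$. By Proposition \ref{Prop:primitive_twist_coefficient}, $f\otimes\chi=f_\chi=\sum_n a_n\chi(n)q^n$. Since $\chi$ is a primitive character of conductor $D$ (here $D\equiv 1\bmod 4$, so the conductor is exactly $D$), we have the Gauss sum identity
\begin{align}
\chi(n)=\frac{1}{\tau(\bar\chi)}\sum_{a\in\Z/D\Z}\bar\chi(a)e^{2\pi i an/D},
\end{align}
valid for all $n$, including those not coprime to $D$, because $\chi$ is primitive. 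Since $\chi$ is quadratic, $\bar\chi=\chi$ and $\tau(\chi)^2=\chi(-1)D=D$ (as $D>0$). Hence $\chi(n)=\tau(\chi)^{-1}\sum_a\chi(a)e^{2\pi i an/D}$, and therefore
\begin{align}
f_\chi(z)=\frac{1}{\tau(\chi)}\sum_{a\in\Z/D\Z}\chi(a)f\left(z+\frac{a}{D}\right).
\end{align}

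Next we compute the periods. For $r=b/2^{n+2}$ we have
\begin{align}
\phi(f_\chi,r)=2\pi i\int_{i\infty}^{r}f_\chi(z)\,dz=\frac{1}{\tau(\chi)}\sum_a\chi(a)\,2\pi i\int_{i\infty}^{r+a/D}f(w)\,dw=\frac{1}{\tau(\chi)}\sum_a\chi(a)\,\phi\left(f,r+\frac aD\right),
\end{align}
by the translation $w=z+a/D$, which fixes $i\infty$. Here $r+a/D=(bD+2^{n+2}a)/(2^{n+2}D)$, matching the statement. For $\phi(f_\chi,-r)$, the same computation gives $\tau(\chi)^{-1}\sum_a\chi(a)\phi(f,-r+a/D)$. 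Reindexing $a\mapsto -a$ and using $\chi(-a)=\chi(-1)\chi(a)=\chi(a)$ (again because $D>0$ makes $\chi$ even) turns this into $\tau(\chi)^{-1}\sum_a\chi(a)\phi(f,-(r+a/D))$. Taking half-sums and half-differences gives
\begin{align}
\eta(f_\chi,r)^{\pm}=\frac{1}{\tau(\chi)}\sum_a\chi(a)\,\eta\left(f,r+\frac aD\right)^{\pm}.
\end{align}
Dividing by $\Omega_{f\otimes\chi}^{\pm}$ and writing $\eta(f,\cdot)^\pm=\Omega_f^\pm[\cdot]_f^\pm$ yields the claimed formula with $\pm=\sgn(\omega^i)$.

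The only delicate point is the sign bookkeeping. We must check that the evenness of $\chi$, which follows from $D>0$, ensures that the $\pm$-components are not interchanged, so that the same sign $\sgn(\omega^i)$ appears on both sides. We must also confirm that the Gauss sum expansion holds for $\gcd(n,D)>1$, which requires the primitivity of $\chi$. Both are standard, and the remaining steps are formal manipulations.
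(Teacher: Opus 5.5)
Your proposal is correct and follows essentially the same route as the paper: expand $f\otimes\chi$ via Birch's lemma, translate the path of integration, and handle the $\phi(f\otimes\chi,-r)$ term by reindexing $a\mapsto -a$ and using $\chi(-1)=1$. The only difference is that you derive Birch's lemma yourself from the Gauss sum identity for the primitive character $\chi$, using Proposition \ref{Prop:primitive_twist_coefficient} to identify $f\otimes\chi$ with $f_\chi$, whereas the paper simply cites Birch's lemma from \cite{MTT86}.
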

\begin{proof}
Using Birch's lemma (cf. \cite[Chapter I, (8.3)]{MTT86})
\begin{align}
(f\otimes \chi)(z)=\frac{1}{\tau(\chi)}\sum_{a\in\Z/D\Z}\chi(a)f\left(z+\frac{a}{D}\right),
\end{align}
we obtain
\begin{align}
\left[\frac{b}{2^{n+2}}\right]_{f\otimes \chi}^{\pm}&=\frac{\eta_{f\otimes\chi}^{\pm}}{\Omega_{f\otimes\chi}^{\pm}}\\
&=\frac{\pi i}{\Omega_{f\otimes\chi}^{\pm}}\left(\int_{i\infty}^{\frac{b}{2^{n+2}}}(f\otimes\chi)(z)dz\pm\int_{i\infty}^{-\frac{b}{2^{n+2}}}(f\otimes\chi)(z)dz\right)\\
&=\frac{\pi i}{\tau(\chi)\Omega_{f\otimes\chi}^{\pm}}\sum_{a\in\Z/D\Z}\chi(a)\left(\int_{i\infty}^{\frac{b}{2^{n+2}}}f\left(z+\frac{a}{D}\right)dz\pm\int_{i\infty}^{-\frac{b}{2^{n+2}}}f\left(z+\frac{a}{D}\right)dz\right)\\
&\overset{(*)}{=}\frac{\Omega_f^{\pm}}{\tau(\chi)\Omega_{f\otimes\chi}^{\pm}}\sum_{a\in\Z/D\Z}\chi(a)\left[\frac{bD+2^{n+2}a}{2^{n+2}D}\right]^{\pm}_f.
\end{align}
To obtain the equality $(*)$, we make the change of variable $a\mapsto -a$ and use $\chi(-1)=1$ as
\begin{align}
\sum_{a\in\Z/D\Z}\chi(a)\int_{i\infty}^{-\frac{b}{2^{n+2}}}f\left(z+\frac{a}{D}\right)dz&=\sum_{a\in\Z/D\Z}\chi(-a)\int_{i\infty}^{-\frac{b}{2^{n+2}}}f\left(z+\frac{-a}{D}\right)dz\\
&=\sum_{a\in\Z/D\Z}\chi(a)\int_{i\infty}^{-\left(\frac{bD+2^{n+2}a}{2^{n+2}D}\right)}f(z)dz.
\end{align}
\end{proof}
\begin{prop}\label{Prop:finite_cong}
For each $n\geq0$, we see that
\begin{align}
U_{n,\chi}:=U_{n,\chi}(T):=\left(\frac{\Omega_f^{\sgn(\omega^i)}}{\Omega_{f\otimes\chi}^{\sgn(\omega^i)}}\right)\frac{1}{\tau(\chi)}\omega^{-i}(D)(1+T)^{-t_n(D)}\in\Lambda_n^{\times}.
\end{align}
Furthermore, we have
\begin{align}\label{eq:MT_congruence}
\theta_{n,1}(f\otimes\chi,\omega^i,T)\equiv U_{n,\chi}(T)\theta_{n,D}(f,\omega^i,T)\bmod 2\Lambda_n.
\end{align}
\end{prop}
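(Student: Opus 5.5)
First I show that $U_{n,\chi}$ is a unit of $\Lambda_n$. The factor $\omega^{-i}(D)(1+T)^{-t_n(D)}$ is a group-like element of $\Z_2[\Gamma_n]$ times a sign, hence a unit. For the scalar part, the Gauss sum of the real even primitive character $\chi$ of conductor $D\equiv1\bmod4$ is $\tau(\chi)=\sqrt{D}$, since $D>0$. Proposition \ref{Prop:periods_quadratic_twist} gives $\Omega_{E^D}^{\pm}/\Omega_E^{\pm}=\tilde u\varepsilon_{\pm}/\sqrt D$ with $\tilde u\in\Z_2^\times$. By Proposition \ref{Prop:elliptic_modular_2-adic_unit}, applied to both $E$ and $E^D$, both curves have good supersingular reduction at $2$, and via Proposition \ref{Prop:primitive_twist_coefficient} we have $f_{E^D}=f\otimes\chi$. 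Hence $\Omega_f^{\pm}/\Omega_{f\otimes\chi}^{\pm}=v\sqrt{D}$ with $v\in\Z_2^\times$. The $\sqrt D$ then cancels against $\tau(\chi)$, and the scalar prefactor is $v\in\Z_2^\times$. So $U_{n,\chi}\in\Lambda_n^\times$.

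For the congruence, I substitute the twisted modular symbol formula \eqref{eq:modular_symbol_twist} into
\begin{align}
\theta_{n,1}(f\otimes\chi,\omega^i,T)=\sum_{b\in(\Z/2^{n+2}\Z)^\times}[b/2^{n+2}]_{f\otimes\chi}\,\omega^i(b)(1+T)^{t_n(b)}.
\end{align}
The pairs $(b,a)$ with $b\in(\Z/2^{n+2})^\times$ and $a\in\Z/D$ give $c=bD+2^{n+2}a\in\Z/2^{n+2}D$. Since $\chi(a)=0$ unless $(a,D)=1$, only units $c\in(\Z/2^{n+2}D\Z)^\times$ contribute, and by CRT the map $(b,a)\mapsto c$ is a bijection onto this unit group. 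Moreover $c\equiv bD\bmod 2^{n+2}$, so
\begin{align}
\omega^i(b)(1+T)^{t_n(b)}=\omega^{-i}(D)(1+T)^{-t_n(D)}\,\omega^i(c)(1+T)^{t_n(c)},
\end{align}
using that $t_n$ and $\omega$ only see the $2$-part. Also $c\equiv 2^{n+2}a\bmod D$, so $\chi(a)=\chi(2)^{-(n+2)}\chi(c)$, and $\chi(2)=\pm1$. This gives the exact identity
\begin{align}
\theta_{n,1}(f\otimes\chi,\omega^i,T)=\pm U_{n,\chi}(T)\sum_{c\in(\Z/2^{n+2}D\Z)^\times}\chi(c)[c/2^{n+2}D]_f\,\omega^i(c)(1+T)^{t_n(c)}.
\end{align}

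The main obstacle is to replace $\chi(c)$ by $1$ modulo $2$. The issue is that the symbols $[\,\cdot\,]_f$ lie only in $\frac12\Z_2$ by Theorem \ref{Thm:periods}, so $\chi(c)\equiv1\bmod 2$ is not enough by itself. I would use the pairing trick of Proposition \ref{Prop:MT_integral}. Pair $c$ with $-c$: because $\chi(-c)=\chi(c)$, $\omega^i(-c)=\sgn(\omega^i)\omega^i(c)$, $t_n(-c)=t_n(c)$, and $[-r]^{\pm}=\pm[r]^{\pm}$, the two terms of each pair are equal. So both the twisted sum and $\theta_{n,D}(f,\omega^i,T)$ equal sums over half-representatives with integral coefficients $2[c/2^{n+2}D]_f\in\Z_2$. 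The difference of the two sums is then
\begin{align}
\sum_{c}(\chi(c)-1)\bigl(2[c/2^{n+2}D]_f\bigr)\omega^i(c)(1+T)^{t_n(c)},
\end{align}
with $\chi(c)-1\in\{0,-2\}$, so it lies in $2\Lambda_n$. Finally the sign $\pm$ is $\equiv1\bmod 2$. Multiplying by the unit $U_{n,\chi}$ preserves $2\Lambda_n$, which yields \eqref{eq:MT_congruence}.
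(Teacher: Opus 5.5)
Your proposal is correct and follows essentially the same route as the paper. The unit property comes from combining Propositions \ref{Prop:elliptic_modular_2-adic_unit} and \ref{Prop:periods_quadratic_twist} with $\tau(\chi)=\sqrt{D}$. The congruence comes from substituting \eqref{eq:modular_symbol_twist}, changing variables $c=bD+2^{n+2}a$, and using the $c\leftrightarrow -c$ pairing of Proposition \ref{Prop:MT_integral} to get coefficients $2[\cdot]_f\in\Z_2$ before replacing $\chi(2)^{-(n+2)}\chi(c)$ by $1$ modulo $2$; your explicit CRT bijection and your treatment of the sign only after integrality is secured simply spell out steps the paper states more tersely.
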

\begin{proof}
Since $\tau(\chi)^2=D$ and the sign ambiguity has no effect on the 2-adic valuations considered below, we henceforth write $\tau(\chi)=\sqrt{D}$. By \eqref{eq:elliptic_modular_ratio_differ} and Proposition \ref{Prop:periods_quadratic_twist}, we have
\begin{align}
U_{n,\chi}&=\left(\frac{\Omega_f^{\sgn(\omega^i)}}{\Omega_{f\otimes\chi}^{\sgn(\omega^i)}}\right)\frac{1}{\tau(\chi)}\omega^{-i}(D)(1+T)^{-t_n(D)}\\
&=\frac{m^{\sgn(\omega^i)}(\varphi_E)\varepsilon^{\pm}_E\cdot c(E^D_0)u_{\varphi_{E^D}}}{m^{\sgn(\omega^i)}(\varphi_{E^D})\varepsilon^{\pm}_{E^D}\cdot c(E_0)u_{\varphi_E}}\frac{\Omega_E^{\sgn(\omega^i)}}{\Omega_{E^D}^{\sgn(\omega^i)}}\frac{1}{\sqrt{D}}\omega^{-i}(D)(1+T)^{-t_n(D)}\\
&=\frac{m^{\sgn(\omega^i)}(\varphi_E)\varepsilon^{\pm}_Ec(E^D_0)u_{\varphi_{E^D}}}{m^{\sgn(\omega^i)}(\varphi_{E^D})\varepsilon^{\pm}_{E^D}c(E_0)u_{\varphi_E}}\frac{\sqrt{D}}{\tilde{u}\varepsilon_{\pm}}\frac{1}{\sqrt{D}}\omega^{-i}(D)(1+T)^{-t_n(D)}\in\Lambda_n^{\times}.
\end{align}
From the equality \eqref{eq:modular_symbol_twist}, we obtain
\begin{align}\label{eq:twisting_MT_relation}
\theta_{n,1}(f\otimes\chi&,\omega^i,T)\\
&=\sum_{b\in(\Z/2^{n+2}\Z)^{\times}}\left[\frac{b}{2^{n+2}}\right]_{f\otimes\chi}^{\sgn(\omega^i)}\omega^i(b)(1+T)^{t_n(b)}\\
&=\sum_{b\in(\Z/2^{n+2}\Z)^{\times}}\left(\frac{\Omega_f^{\sgn(\omega^i)}}{\Omega_{f\otimes\chi}^{\sgn(\omega^i)}}\right)\frac{1}{\tau(\chi)}\sum_{a\in \Z/D\Z}\chi(a)\left[\frac{bD+2^{n+2}a}{2^{n+2}D}\right]_f^{\sgn(\omega^i)}\omega^i(b)(1+T)^{t_n(b)}.
\end{align}
Under the change of variable $c=bD+2^{n+2}a$, we have
\begin{align}\label{eq:character_changing_variable}
\chi(a)=\chi(2^{-(n+2)}c)=\chi(2^{n+2})^{-1}\chi(c)\equiv \chi(c)\bmod 2,\quad \omega^i(b)=\omega^i(D)^{-1}\omega^i(c).
\end{align}
Moreover, since 
\begin{align}
\langle c\rangle=\langle b\rangle\langle D\rangle\text{ in }(1+4\Z_2)/(1+2^{n+2}\Z_2),
\end{align}
we have
\begin{align}\label{eq:log_changing_variable}
t_n(b)\equiv t_n(c)-t_n(D)\bmod 2^n.
\end{align}
Substituting \eqref{eq:character_changing_variable} and \eqref{eq:log_changing_variable} into \eqref{eq:twisting_MT_relation}, we have
\begin{align}
\theta_{n,1}(f\otimes\chi,\omega^i,T)&=U_{n,\chi}\sum_{c\in (\Z/2^{n+2}D\Z)^{\times}}\chi(2^{-(n+2)})\chi(c)\left[\frac{c}{2^{n+2}D}\right]_f^{\sgn(\omega^i)}\omega^i(c)(1+T)^{t_n(c)}\\
&=U_{n,\chi}\sum_{\substack{c\in (\Z/2^{n+2}D\Z)^{\times}\\ 1\leq c<2^{n+1}D}}2\chi(2^{-(n+2)})\chi(c)\left[\frac{c}{2^{n+2}D}\right]_f^{\sgn(\omega^i)}\omega^i(c)(1+T)^{t_n(c)}\\
&\equiv U_{n,\chi}\sum_{\substack{c\in (\Z/2^{n+2}D\Z)^{\times}\\ 1\leq c<2^{n+1}D}}2\left[\frac{c}{2^{n+2}D}\right]_f^{\sgn(\omega^i)}\omega^i(c)(1+T)^{t_n(c)}\\
&\equiv U_{n,\chi}(T)\theta_{n,D}(f,\omega^i,T)\bmod 2\Lambda_n.
\end{align}
\end{proof}

%%%%%%%%%%%%%%%%%%%%%%

%%%%%%%%%%%%%%%%%%%%%%%
\subsection{Matsuno-type formula for the sharp/flat 2-adic $L$-function}
\label{ss:Matsuno_formula}
%%%%%%%%%%%%%%%%%%%%%%

Let $E$ be an elliptic curve defined over $\Q$ with good supersingular reduction at 2 and square-free conductor, and let $D>0$ be a square-free integer such that $D\equiv 1\bmod 4$. Let $f_A$ be a weight-two newform corresponding to an elliptic curve $A$ defined over $\Q$ by the modularity theorem. We denote the Iwasawa $\mu$- and $\lambda$-invariants of $0\neq F\in \Z_2\llbracket T\rrbracket$ by $\mu_2(F)$ and $\lambda_2(F)$, respectively. We put $\mu^{\sharp/\flat}_{2,M}(A,\omega^i):=\mu_2(L_{2,M}^{\sharp/\flat}(f_A,\omega^i,T))$ and $\lambda^{\sharp/\flat}_{2,M}(A,\omega^i):=\lambda_2(L_{2,M}^{\sharp/\flat}(f_A,\omega^i,T))$. For simplicity, denote $\mu_{2,1}^{\sharp/\flat}(A,\omega^i)$, $\lambda_{2,1}^{\sharp/\flat}(A,\omega^i)$ by $\mu_2^{\sharp/\flat}(A,\omega^i)$, $\lambda_2^{\sharp/\flat}(A,\omega^i)$, respectively.

A direct calculation gives the following lemma.
\begin{lem}
For each $n\geq0$, we have
\begin{align}\label{eq:matrix_integral}
R_n(\alpha,\beta)\begin{pmatrix}-1&-1\\ \beta&\alpha\end{pmatrix}^{-1}C(f)^{(n+3)}=\begin{pmatrix}-a_2(f)&-1\\ 1&0\end{pmatrix}.
\end{align}
\end{lem}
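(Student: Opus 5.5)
The plan is to diagonalize $C(f)$ using the matrix $P:=\begin{pmatrix}-1&-1\\ \beta&\alpha\end{pmatrix}$. That turns the $n$-dependence of $R_n(\alpha,\beta)$ and of $C(f)^{n+3}$ into diagonal matrices that cancel, leaving a product of two explicit constant matrices.

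\emph{Step 1.} I will show that the columns of $P$ are eigenvectors of $C=C(f)$. Since $\alpha,\beta$ are the roots of $X^2-a_2X+2$, we have $\alpha+\beta=a_2$ and $\alpha\beta=2$. Hence
\begin{align}
C\begin{pmatrix}-1\\ \beta\end{pmatrix}=\begin{pmatrix}\beta-a_2\\ 2\end{pmatrix}=\begin{pmatrix}-\alpha\\ \alpha\beta\end{pmatrix}=\alpha\begin{pmatrix}-1\\ \beta\end{pmatrix},
\end{align}
and symmetrically $C\begin{pmatrix}-1\\ \alpha\end{pmatrix}=\beta\begin{pmatrix}-1\\ \alpha\end{pmatrix}$. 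Thus $CP=P\,\mathrm{diag}(\alpha,\beta)$. Because $\alpha\neq\beta$, the matrix $P$ is invertible with $\det P=\beta-\alpha$, and we obtain
\begin{align}
P^{-1}C^{n+3}=\mathrm{diag}(\alpha^{n+3},\beta^{n+3})\,P^{-1}.
\end{align}

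\emph{Step 2.} I will absorb the diagonal matrix into $R_n(\alpha,\beta)$. Multiplying the first column of $R_n(\alpha,\beta)$ by $\alpha^{n+3}$ and the second by $\beta^{n+3}$ gives
\begin{align}
R_n(\alpha,\beta)\,\mathrm{diag}(\alpha^{n+3},\beta^{n+3})=\begin{pmatrix}\alpha&\beta\\ -1&-1\end{pmatrix}.
\end{align}
This matrix is independent of $n$, which explains why the identity holds for every $n\geq0$.

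\emph{Step 3.} I will compute the remaining product. Using $P^{-1}=\frac{1}{\beta-\alpha}\begin{pmatrix}\alpha&1\\ -\beta&-1\end{pmatrix}$, the first row of $\begin{pmatrix}\alpha&\beta\\ -1&-1\end{pmatrix}P^{-1}$ is
\begin{align}
\frac{1}{\beta-\alpha}(\alpha^2-\beta^2,\ \alpha-\beta)=(-(\alpha+\beta),\,-1)=(-a_2,\,-1).
\end{align}
The second row is $\frac{1}{\beta-\alpha}(\beta-\alpha,\ 0)=(1,\,0)$. This is exactly $\begin{pmatrix}-a_2(f)&-1\\ 1&0\end{pmatrix}$, as claimed.

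There is no real obstacle here; the only point needing care is the order of multiplication. One should use the right-eigenvector relation $CP=P\,\mathrm{diag}(\alpha,\beta)$, rather than a left-eigenvector version, so that $P^{-1}$ can be commuted past $C^{n+3}$. One also uses $\alpha\neq\beta$, which was noted in \S\ref{ss:2-adic_L}, to invert $P$.
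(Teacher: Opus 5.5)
Your proof is correct and is essentially the ``direct calculation'' the paper invokes without details. Diagonalizing $C(f)$ via the eigenvector matrix $\begin{pmatrix}-1&-1\\ \beta&\alpha\end{pmatrix}$ absorbs the $n$-dependence of $R_n(\alpha,\beta)$. Each of your three steps (the eigenvector check, the cancellation to $\begin{pmatrix}\alpha&\beta\\ -1&-1\end{pmatrix}$, and the final product using $\alpha+\beta=a_2(f)$) checks out.
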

To study the variation of Iwasawa invariants under the congruence \eqref{eq:MT_congruence}, we show the following lemmas.
\begin{lem}
Put
\begin{align}
U_{\chi}:=\left(\frac{\Omega_f^{\sgn(\omega^i)}}{\Omega_{f\otimes\chi}^{\sgn(\omega^i)}}\right)\frac{1}{\tau(\chi)}\omega^{-i}(D)(1+T)^{-t(D)}.
\end{align}
Then we have $\mu_2(U_{\chi})=\lambda_2(U_{\chi})=0$.
\end{lem}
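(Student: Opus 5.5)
The plan is to show that $U_{\chi}$ is a unit of $\Lambda=\Z_2\llbracket T\rrbracket$. A unit $F\in\Lambda^{\times}$ has $F(0)\in\Z_2^{\times}$, so it is not divisible by $2$ and its reduction modulo $2$ does not vanish at $T=0$; this gives $\mu_2(F)=\lambda_2(F)=0$ directly. I will write $U_{\chi}$ as a product of two factors, the constant
\begin{align}
c_{\chi}:=\left(\frac{\Omega_f^{\sgn(\omega^i)}}{\Omega_{f\otimes\chi}^{\sgn(\omega^i)}}\right)\frac{1}{\tau(\chi)}\omega^{-i}(D)
\end{align}
and the power series $(1+T)^{-t(D)}$, and show that each factor is a unit.

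For the constant, I follow the computation in the proof of Proposition \ref{Prop:finite_cong}. By \eqref{eq:elliptic_modular_ratio_differ} applied to $A=E$ and $A=E^D$, each ratio $\Omega_A^{\pm}/\Omega_{f_A}^{\pm}$ is a $2$-adic unit. This uses Theorem \ref{Thm:AU} and the oddness of $m^{\pm}(\varphi_A)$ and $\deg\varphi_A$ from Proposition \ref{Prop:elliptic_modular_2-adic_unit}. By Proposition \ref{Prop:periods_quadratic_twist}, $\Omega_{E^D}^{\pm}/\Omega_E^{\pm}=\tilde{u}\varepsilon_{\pm}/\sqrt{D}$ with $\tilde{u}\in\Z_2^{\times}$. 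Since $\tau(\chi)=\pm\sqrt{D}$, the factors $\sqrt{D}$ cancel. Together with $\omega^{-i}(D)=\pm1$, this shows $c_{\chi}\in\Z_2^{\times}$. Here $f\otimes\chi=f_{E^D}$ by Proposition \ref{Prop:primitive_twist_coefficient}.

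For the series factor, $t(D)\in\Z_2$ is defined by $\langle D\rangle=5^{t(D)}$. So $(1+T)^{-t(D)}=\sum_{k\geq0}\binom{-t(D)}{k}T^k$, and its coefficients lie in $\Z_2$ because $p$-adic binomial coefficients are integral for $p$-adic integer exponents. Its constant term is $1$, so it is a unit of $\Lambda$. Alternatively, it is the image of the group element $\langle D\rangle^{-1}\in\Gamma$ under $\Lambda\cong\Z_2\llbracket T\rrbracket$, hence a unit.

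Multiplying the two factors, $U_{\chi}\in\Lambda^{\times}$ with $U_{\chi}(0)=c_{\chi}\in\Z_2^{\times}$, so $\mu_2(U_{\chi})=\lambda_2(U_{\chi})=0$. The only step needing care is the $2$-adic unit property of the constant, in particular tracking that $\tau(\chi)$ and $\sqrt{D}$ agree up to sign. This is already settled by the earlier propositions, so I do not expect a real obstacle.
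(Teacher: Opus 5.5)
Your proposal is correct and follows essentially the same route as the paper. The paper also uses \eqref{eq:elliptic_modular_ratio_differ} and Proposition \ref{Prop:periods_quadratic_twist}, with the factors $\sqrt{D}$ cancelling against $\tau(\chi)$, to write $U_{\chi}$ as a constant in $\Z_2^{\times}$ times $(1+T)^{-t(D)}\in\Lambda^{\times}$, and concludes $\mu_2(U_{\chi})=\lambda_2(U_{\chi})=0$ from this.
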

\begin{proof}
By a similar calculation in the proof of Proposition \ref{Prop:finite_cong}, we have
\begin{align}
U_{\chi}=\frac{m^{\sgn(\omega^i)}(\varphi_E)\varepsilon^{\pm}_Ec(E^D_0)u_{\varphi_{E^D}}}{m^{\sgn(\omega^i)}(\varphi_{E^D})\varepsilon^{\pm}_{E^D}c(E_0)u_{\varphi_E}}\frac{1}{\tilde{u}\varepsilon_{\pm}}\omega^{-i}(D)(1+T)^{-t(D)},
\end{align}
and this implies the lemma.
\end{proof}
Let $\gamma_n$ denote the image of the topological generator $5$ in $\Gamma_n$.
\begin{lem}
For each $n\in\N$ and $F=\sum_{s=0}^{2^{n-1}-1}a_s\gamma_{n-1}^s\in \Lambda_{n-1}$, we have
\begin{align}\label{eq:U_twist}
\nu_{n-1/n}(U_{n-1,\chi}F)=U_{n,\chi}\nu_{n-1/n}(F).
\end{align}
\end{lem}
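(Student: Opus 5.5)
The plan is to reduce \eqref{eq:U_twist} to two elementary facts: the norm-type map $\nu_{n-1/n}$ is $\Lambda_n$-linear when $\Lambda_{n-1}$ is regarded as a $\Lambda_n$-module through $\pi_{n/n-1}$, and $U_{n,\chi}$ is a lift of $U_{n-1,\chi}$ under $\pi_{n/n-1}$.

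First, I would write $U_{n,\chi}=c_\chi\,\gamma_n^{-t_n(D)}$. Here
\begin{align}
c_\chi:=\left(\frac{\Omega_f^{\sgn(\omega^i)}}{\Omega_{f\otimes\chi}^{\sgn(\omega^i)}}\right)\frac{1}{\tau(\chi)}\omega^{-i}(D)
\end{align}
is a constant independent of $n$, and it lies in $\Z_2^{\times}$ by the computation in the proof of Proposition \ref{Prop:finite_cong}. We also use that $(1+T)$ corresponds to $\gamma_n$ under $\Lambda_n\cong\Z_2\llbracket T\rrbracket/((1+T)^{2^n}-1)$.

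Next, I would check compatibility of the exponents, namely $t_n(D)\equiv t_{n-1}(D)\bmod 2^{n-1}$. Since $5^{t_n(D)}\equiv\langle D\rangle\bmod 2^{n+2}$, reducing modulo $2^{n+1}$ gives $5^{t_n(D)}\equiv\langle D\rangle\bmod 2^{n+1}$. The element $5$ has order $2^{n-1}$ in $(1+4\Z_2)/(1+2^{n+1}\Z_2)$, so uniqueness of $t_{n-1}(D)$ gives the congruence. Consequently $\pi_{n/n-1}(U_{n,\chi})=U_{n-1,\chi}$.

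Then I would verify on the basis elements that $\nu_{n-1/n}(\pi_{n/n-1}(y)x)=y\,\nu_{n-1/n}(x)$ for $y\in\Lambda_n$ and $x\in\Lambda_{n-1}$. By $\Z_2$-linearity it suffices to take $y=\gamma_n^k$ and $x=\gamma_{n-1}^s$. The preimages of $\gamma_{n-1}^{s+k}$ in $\Gamma_n$ are $\gamma_n^{s+k}$ and $\gamma_n^{s+k+2^{n-1}}$. Their sum equals $\gamma_n^k(\gamma_n^s+\gamma_n^{s+2^{n-1}})=\gamma_n^k\nu_{n-1/n}(\gamma_{n-1}^s)$.

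Finally, I would apply this identity with $y=U_{n,\chi}$ and $x=F=\sum_s a_s\gamma_{n-1}^s$. Using $\pi_{n/n-1}(U_{n,\chi})=U_{n-1,\chi}$, this yields \eqref{eq:U_twist}. No step is a real obstacle; the only point needing care is the index bookkeeping in the compatibility $t_n(D)\equiv t_{n-1}(D)\bmod 2^{n-1}$, which must use the correct moduli $2^{n+2}$ and $2^{n+1}$.
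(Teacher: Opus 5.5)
Your proof is correct and is essentially the paper's argument: the paper computes $\nu_{n-1/n}(c_\chi\gamma_{n-1}^{s-t_{n-1}(D)})=c_\chi\gamma_n^{-t_n(D)}(\gamma_n^{s}+\gamma_n^{s+2^{n-1}})$ term by term, which is your $\Lambda_n$-linearity of $\nu_{n-1/n}$ combined with $\pi_{n/n-1}(U_{n,\chi})=U_{n-1,\chi}$. Your explicit check that $t_n(D)\equiv t_{n-1}(D)\bmod 2^{n-1}$ fills in a step the paper uses silently, and the unit property of $c_\chi$ is harmless but not needed.
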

\begin{proof}
We put
\begin{align}
c_{\chi}:=\left(\frac{\Omega_f^{\sgn(\omega^i)}}{\Omega_{f\otimes\chi}^{\sgn(\omega^i)}}\right)\frac{1}{\tau(\chi)}\omega^{-i}(D).
\end{align}
Therefore we see that
\begin{align}
\nu_{n-1/n}(U_{n-1,\chi}F)&=c_{\chi}\sum_{s=0}^{2^{n-1}-1}a_s\nu_{n-1/n}(\gamma_{n-1}^{s-t_{n-1}(D)})\\
&=c_{\chi}\sum_{s=0}^{2^{n-1}-1}a_s(\gamma_n^{s-t_n(D)}+\gamma_n^{s-t_n(D)+2^{n-1}})\\
&=c_{\chi}\gamma_n^{-t_n(D)}\sum_{s=0}^{2^{n-1}-1}a_s(\gamma_n^s+\gamma_n^{s+2^{n-1}})\\
&=U_{n,\chi}\nu_{n-1/n}(F).
\end{align}
\end{proof}
\begin{lem}
If $D=\ell_1\cdots \ell_r$ is the prime factorization of $D$, then
\begin{align}
\mu_{2,D}^{\sharp/\flat}(E,\omega^i)
&=
\mu_2^{\sharp/\flat}(E,\omega^i)
+
\sum_{j=1}^r \mu_2\!\left(h_{\ell_j}(E,\omega^i,T)\right),
\label{eq:mu_conductor_differ}
\\
\lambda_{2,D}^{\sharp/\flat}(E,\omega^i)
&=
\lambda_2^{\sharp/\flat}(E,\omega^i)
+
\sum_{j=1}^r \lambda_2\!\left(h_{\ell_j}(E,\omega^i,T)\right).
\label{eq:lambda_conductor_differ}
\end{align}
\end{lem}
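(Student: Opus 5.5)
Since $D$ is square-free, we iterate Proposition \ref{Euler_factor_difference}. Set $D_0:=1$ and $D_j:=\ell_1\cdots\ell_j$ for $1\leq j\leq r$. Each $D_j$ is square-free and odd, so Proposition \ref{Euler_factor_difference} applies with $M=D_j$ and $\ell=\ell_j\mid D_j$. This gives
\begin{align}
L^{\sharp/\flat}_{2,D_j}(f_E,\omega^i,T)=h_{\ell_j}(f_E,\omega^i,T)\,L^{\sharp/\flat}_{2,D_{j-1}}(f_E,\omega^i,T).
\end{align}
Telescoping over $j=1,\dots,r$ yields the identity
\begin{align}
L^{\sharp/\flat}_{2,D}(f_E,\omega^i,T)=\prod_{j=1}^r h_{\ell_j}(f_E,\omega^i,T)\cdot L^{\sharp/\flat}_{2}(f_E,\omega^i,T)
\end{align}
in $\Lambda$, with $h_{\ell_j}(E,\omega^i,T):=h_{\ell_j}(f_E,\omega^i,T)$.

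Next we check that every factor lies in $\Lambda=\Z_2\llbracket T\rrbracket$. For the sharp/flat functions this is the construction in Theorem \ref{Thm:exist_sharp_flat}. For $h_{\ell}$, note that $a_\ell\in\Z$ and $\omega^{\pm i}(\ell)\in\{\pm1\}$. Also $t(\ell)\in\Z_2$, so $(1+T)^{\pm t(\ell)}=\sum_k\binom{\pm t(\ell)}{k}T^k$ has coefficients in $\Z_2$, since binomial coefficients of $2$-adic integers are $2$-adic integers. Hence $h_\ell\in\Lambda$.

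The conclusion then follows from additivity of the Iwasawa invariants. By the Weierstrass preparation theorem, any nonzero $F\in\Lambda$ factors uniquely as $2^{\mu}P(T)u(T)$ with $P$ distinguished of degree $\lambda$ and $u\in\Lambda^\times$. So for nonzero $F,G\in\Lambda$ we have $\mu_2(FG)=\mu_2(F)+\mu_2(G)$ and $\lambda_2(FG)=\lambda_2(F)+\lambda_2(G)$. Applying this to the product above gives \eqref{eq:mu_conductor_differ} and \eqref{eq:lambda_conductor_differ}.

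The only real obstacle is nonvanishing: the invariants must be defined, so every factor must be nonzero. For $L^{\sharp/\flat}_2(f_E,\omega^i,T)$ this is implicit in the statement, since $\mu_2^{\sharp/\flat}(E,\omega^i)$ is referred to. The identity then shows the left side is nonzero exactly when all factors are. For $h_\ell$, I would argue $h_\ell\neq0$ directly by reducing modulo $(2,T)$, or more simply by evaluating at a suitable $T$. If $t(\ell)\neq0$, then $h_\ell$ is nonconstant in $u=1+T$ as a function $a_\ell-c u^{t}-c'u^{-t}$ with $c=\pm1$, hence not identically zero. If $t(\ell)=0$, which happens only for $\ell=\pm1$ and so never for a prime, we would need $a_\ell\neq\omega^i(\ell)+\varepsilon(\ell)\omega^{-i}(\ell)$. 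Since $t(\ell)\neq0$ for every prime $\ell$, nonvanishing holds in general. Later, when computing $\mu_2(h_\ell)$ and $\lambda_2(h_\ell)$ explicitly, one reduces mod $2$ and uses $(1+T)^{2^{n_\ell}}\equiv 1+T^{2^{n_\ell}}$, but that is not needed for this lemma.
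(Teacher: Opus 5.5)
Your proposal is correct and follows the paper's argument, which simply applies the factorization \eqref{eq:prime_factor_reduce} of Proposition~\ref{Euler_factor_difference} once for each prime $\ell_j\mid D$ and then uses additivity of $\mu_2$ and $\lambda_2$ for products in $\Lambda$. Your checks that $h_{\ell}\in\Lambda$ and $h_{\ell}\neq0$ spell out what the paper leaves implicit; note that it is your nonconstancy argument in $T$ that proves $h_{\ell}\neq0$, since reducing modulo $(2,T)$ alone gives zero whenever $\lambda_2(h_{\ell})>0$.
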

\begin{proof}
We use the equality \eqref{eq:prime_factor_reduce} repeatedly.
\end{proof}
We derive a Greenberg--Vatsal-type congruence (cf. \cite{GV00}) for the sharp/flat $p$-adic $L$-functions from finite-layer congruences.
We note that, in a related direction, Corpuz and Lei \cite{CL25+} have recently obtained congruence results for sharp/flat $L$-functions attached to higher-weight modular forms at odd non-ordinary primes. 
\begin{thm}\label{Theorem:congruence_relation}
For $*\in\{\sharp,\flat\}$, we have
\begin{align}\label{eq:cong_sharp}
L^*_{2,1}(f\otimes\chi,\omega^i,T)\equiv U_{\chi}L^*_{2,D}(f,\omega^i,T)\bmod 2.
\end{align}
In particular, for $*\in\{\sharp,\flat\}$,
\begin{align}\label{cong_twist_L}
\overline{L^*_2(E^D,\omega^i,T)}\equiv u_{D,i}(T)T^{\Delta_E(D)}\overline{L^*_2(E,\omega^i,T)}
\end{align}
in $\F_2\llbracket T\rrbracket$. Here,
\begin{align}
\overline{h_{\ell}(f_E,\omega^i,T)}=T^{\lambda_2(h_{\ell}(f_E,\omega^i,T))}u_{\ell,i}(T),\quad u_{\ell,i}(T)\in\F_2\llbracket T\rrbracket^{\times},
\end{align}
and
\begin{align}
u_{D,i}(T):=\overline{U_{\chi}}\prod_{\ell\mid D}u_{\ell,i}(T),\quad \Delta_E(D):=\sum_{\ell\mid D}\lambda_2(h_{\ell}(f_E,\omega^i,T)).
\end{align}
\end{thm}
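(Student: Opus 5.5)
The plan is to pass the finite-layer congruence \eqref{eq:MT_congruence} of Proposition \ref{Prop:finite_cong} through Sprung's decomposition (Proposition \ref{Prop:finite_exist}) and then take the projective limit. Write
\begin{align}
\Theta_n(g,M):=(\theta_{n,M}(g,\omega^i,T),\nu_{n-1/n}(\theta_{n-1,M}(g,\omega^i,T)))
\end{align}
for $g\in\{f,f\otimes\chi\}$. The first step is to check that the pair $(f,f\otimes\chi)$ has the same matrices $C_j$ and $C$. Since $D\equiv1\bmod 4$, we have $a_2(f\otimes\chi)=\chi(2)a_2(f)=\pm a_2(f)$, so this holds at least modulo $2$, and I will arrange the argument to use only reductions mod $2$. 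By Proposition \ref{Prop:finite_cong} and the compatibility \eqref{eq:U_twist} of $U_{n,\chi}$ with $\nu_{n-1/n}$, we get
\begin{align}
\Theta_n(f\otimes\chi,1)\equiv U_{n,\chi}\Theta_n(f,D)\bmod 2\Lambda_n.
\end{align}

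The second step removes the non-integral matrices. Multiplying the defining identity \eqref{eq:exist_finite_sharp/flat} on the right by $\begin{pmatrix}-1&-1\\ \beta&\alpha\end{pmatrix}^{-1}C^{n+3}$ and using \eqref{eq:matrix_integral}, we obtain the purely integral relation
\begin{align}
\Theta_n(g,M)\begin{pmatrix}-a_2&-1\\ 1&0\end{pmatrix}=\vec{L}^{\omega^i}_{2,n,M}(g)\,C_1\cdots C_n
\end{align}
in $\Lambda_n^{\oplus2}$. The left-hand matrix is invertible over $\Z_2$. Comparing $g=f\otimes\chi,M=1$ with $U_{n,\chi}$ times $g=f,M=D$, the difference
\begin{align}
\vec{L}^{\omega^i}_{2,n,1}(f\otimes\chi)-U_{n,\chi}\vec{L}^{\omega^i}_{2,n,D}(f)
\end{align}
multiplied by $C_1\cdots C_n$ lies in $2\Lambda_n^{\oplus2}$. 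This uses that the $C_j$ for $f\otimes\chi$ and $f$ agree mod $2$; note that $a_2\equiv0$, so mod $2$ we have $C_j\equiv\begin{pmatrix}0&1\\ \Phi_{2^j}&0\end{pmatrix}$.

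The third step, which I expect to be the main obstacle, is to deduce from this a congruence of the vectors themselves, up to an element of the kernel $\mathcal{M}_n$. Since $\det C_j=\Phi_{2^j}(1+T)$ is not a unit, we cannot simply invert $C_1\cdots C_n$ mod $2$. My plan is to work in $\Lambda_n/2=\F_2[T]/(T^{2^n})$, where $\Phi_{2^j}(1+T)\equiv T^{2^{j-1}}$. I would then analyze the kernel of right multiplication by $\overline{C_1\cdots C_n}$ (a monomial-type matrix) and show that the resulting ambiguity is killed modulo a power of $T$ growing with $n$, say $T^{2^{n-1}}$ or so. Then I pass to the limit: the vectors $\vec{L}_{2,n,M}$ converge to $(L^\sharp_{2,M},L^\flat_{2,M})$ in the sense of Sprung's construction, since $\mathcal{M}(f)=0$. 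Consequently the congruence holds modulo $(2,T^{c_n})$ with $c_n\to\infty$, hence modulo $2$ in $\Lambda$. The sign $\chi(2)$ in $a_2$ would be handled the same way, because all comparisons are mod $2$. This yields \eqref{eq:cong_sharp}.

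Finally, for \eqref{cong_twist_L}, I identify $L^*_{2,1}(f\otimes\chi,\omega^i,T)=L^*_2(E^D,\omega^i,T)$ by Proposition \ref{Prop:primitive_twist_coefficient}. Applying Proposition \ref{Euler_factor_difference} repeatedly over the primes $\ell\mid D$ gives
\begin{align}
L^*_{2,D}(f,\omega^i,T)=\prod_{\ell\mid D}h_\ell(f,\omega^i,T)\,L^*_2(E,\omega^i,T).
\end{align}
Reducing mod $2$ and writing $\overline{h_\ell}=T^{\lambda_2(h_\ell)}u_{\ell,i}$, which is valid by the Weierstrass preparation theorem once $\mu_2(h_\ell)=0$, we get the claimed form. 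If some $h_\ell$ has positive $\mu$, then both sides vanish mod $2$ and the statement is read with that convention. Here $\overline{U_\chi}$ is a unit by the preceding lemma on $U_\chi$.
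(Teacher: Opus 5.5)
Your proposal is correct and is essentially the paper's argument. It uses the same integral relation from the lemma on $R_n(\alpha,\beta)$, the same congruence $(\vec{L}^{\omega^i}_{2,n,1}(f\otimes\chi)-U_{n,\chi}\vec{L}^{\omega^i}_{2,n,D}(f))\,C_1(f)\cdots C_n(f)\equiv 0 \bmod 2$, and the same monomial shape of $C_1(f)\cdots C_n(f)$ modulo $2$, with exponents $q_n^{\sharp},q_n^{\flat}$ satisfying $2^n-q_n^{*}\to\infty$. The gap $2^n-q_n^{*}$ is roughly $2^n/3$, not $2^{n-1}$. The paper packages this step as a contradiction with $d=\ord_T(\overline{F^*})$, whereas you use it directly as a congruence modulo $(2,T^{2^n-q_n^*})$.

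Your way of controlling the finite-level ambiguity is the right one: reductions of elements of $\mathcal{M}_n$ lie in the kernel of $\overline{C_1\cdots C_n}$, hence vanish modulo $(2,T^{2^n-q_n^*})$. This matters because $\mathcal{M}_n$ is not literally contained in $2\Lambda_n^{\oplus2}$, as the paper asserts. For instance $(0,((1+T)^{2^n}-1)/(T+2))\in\mathcal{M}_n(f)$ reduces to $(0,T^{2^n-1})$.

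In the last step no convention is needed, because $\mu_2(h_\ell)=0$ always holds. Modulo $2$, $h_\ell$ is a unit times $(1+T)^{2t}+a_\ell(1+T)^t+1$ or $1+(1+T)^t$ with $t=t(\ell)\neq0$, and both are nonzero.
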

\begin{proof}
We fix $(\vec{L}_{2,n,1}^{\omega^i}(f\otimes\chi))_n=((L^{\sharp}_{2,n,1}(f\otimes\chi,\omega^i,T),L^{\flat}_{2,n,1}(f\otimes\chi,\omega^i,T)))_n\in(\Lambda_n^{\oplus 2})_n$ and $(\vec{L}^{\omega^i}_{2,n,D}(f))_n=((L^{\sharp}_{2,n,D}(f,\omega^i,T),L^{\flat}_{2,n,D}(f,\omega^i,T)))_n\in(\Lambda_n^{\oplus 2})_n$ obtained by Proposition \ref{Prop:finite_exist}.
Combining \eqref{eq:exist_finite_sharp/flat}, \eqref{eq:matrix_integral}, and \eqref{eq:U_twist}, we see that
\begin{align}
&\vec{L}_{2,n,1}^{\omega^i}(f\otimes\chi)C_1(f\otimes\chi)\cdots C_n(f\otimes\chi)\\
&=(\theta_{n,1}(f\otimes \chi,\omega^i,T),\nu_{n-1/n}(\theta_{n-1,1}(f\otimes\chi,\omega^i,T)))\begin{pmatrix}-a_2(f\otimes\chi)&-1\\ 1&0\end{pmatrix}\\
&\equiv U_{n,\chi}(\theta_{n,D}(f,\omega^i,T),\nu_{n-1/n}(\theta_{n-1,D}(f,\omega^i,T)))\begin{pmatrix}-a_2(f)&-1\\ 1&0\end{pmatrix}\bmod 2\\
&\equiv U_{n,\chi}\vec{L}_{2,n,D}^{\omega^i}(f)C_1(f)\cdots C_n(f)\bmod 2.
\end{align}
Since $C_i(f\otimes\chi)\equiv C_i(f)\bmod 2$ for each $i$, we have
\begin{align}\label{eq:mod_2_vanish}
(\vec{L}^{\omega^i}_{2,n,1}(f\otimes\chi)-U_{n,\chi}\vec{L}_{2,n,D}^{\omega^i}(f))C_1(f)\cdots C_n(f)\equiv 0\bmod 2,
\end{align}
for all $n\geq1$. Since 
\begin{align}\label{eq:mod-2_congruence_n-th-layer}
C_j(f)\equiv \begin{pmatrix}0&1\\ T^{2^{j-1}}&0\end{pmatrix}\bmod 2,
\end{align}
we obtain
\begin{align}\label{eq:prod_C_i}
C_1(f)\cdots C_n(f)\equiv\begin{cases}
\begin{pmatrix}T^{q_n^{\sharp}}&0\\ 0& T^{q_n^{\flat}}\end{pmatrix}\bmod 2&(2\mid n),\\
\begin{pmatrix}0&T^{q_n^{\sharp}}\\ T^{q_n^{\flat}}&0\end{pmatrix}\bmod 2&(2\nmid n),
\end{cases}
\end{align}
where
\begin{align}
q_n^{\sharp}:=\begin{cases}2+2^3+\cdots +2^{n-1}&(2\mid n),\\ 2+2^3+\cdots+2^{n-2}&(2\nmid n)\end{cases},\quad q_n^{\flat}:=\begin{cases}1+2^2+\cdots +2^{n-2}&(2\mid n),\\ 1+2^2+\cdots+2^{n-1}&(2\nmid n).\end{cases}
\end{align}
By the following calculation
\begin{align}
2^n-q_n^{\sharp}=\begin{cases}\frac{2^n+2}{3}&(2\mid n),\\ \frac{2^{n+1}+2}{3}&(2\nmid n),\end{cases}\quad 2^n-q_n^{\flat}=\begin{cases}\frac{2^{n+1}+1}{3}&(2\mid n),\\ \frac{2^n+1}{3}&(2\nmid n),\end{cases}
\end{align}
we obtain $2^n-q_n^{\sharp/\flat}\to\infty$ as $n\to\infty$.  

For $\overline{F}=\sum_k\bar{a}_kT^k\in\F_2\llbracket T\rrbracket/(T^{2^n})$, we define the $T$-adic order by
\begin{align}
\ord_T(\overline{F}):=\begin{cases}
\min\{k\in\Z_{\geq0}\ |\ \bar{a}_k\neq 0\}&(\text{if }\overline{F}\neq0),\\
\infty&(\text{if }\overline{F}=0).
\end{cases}
\end{align}
We put, for each $n\geq0$,
\begin{align}
F_n^*:=L^*_{2,n,1}(f\otimes\chi,\omega^i,T)-U_{n,\chi}L^*_{2,n,D}(f,\omega^i,T),\quad F^*:=L^*_{2,1}(f\otimes\chi,\omega^i,T)-U_{\chi}L^*_{2,D}(f,\omega^i,T).
\end{align}
If $F^*\not\equiv0\bmod 2$, we put
\begin{align}
d:=\ord_T(\overline{F^*})<\infty.
\end{align}
By the argument in \cite[p.911]{Spr17}, there exists a positive integer $n_0$ such that 
\begin{align}
\mathcal{M}_n(f),\ \mathcal{M}_n(f\otimes\chi)\subset 2\Lambda_n^{\oplus2}
\end{align}
for all $n\geq n_0$. Let $\prj_n:\Lambda\to \Lambda_n$ be the $n$-th projection for each $n\geq0$. Then for each $n\geq n_0$, we have
\begin{align}
\vec{L}^{\omega^i}_{2,n,1}(f\otimes\chi)\equiv \prj_n(\vec{L}_{2,1}^{\omega^i}(f\otimes\chi)),\quad \vec{L}^{\omega^i}_{2,n,D}(f)\equiv \prj_n(\vec{L}_{2,D}^{\omega^i}(f))\bmod 2\Lambda_n^{\oplus2}.
\end{align}
For all $n$ such that $2^n\geq d+1$, since $(1+T)^{2^n}-1\equiv T^{2^n}\bmod 2$, we have the natural isomorphism
\begin{align}\label{identification_finite_infinity_reduction}
\Lambda_n/(2,T^{d+1})\cong \Lambda/(2,T^{d+1}).
\end{align}
Via the identification \eqref{identification_finite_infinity_reduction}, for all $n$ satisfying $n\geq n_0$ and $2^n\geq d+1$,
\begin{align}
\vec{L}^{\omega^i}_{2,n,1}(f\otimes\chi)\equiv\vec{L}_{2,1}^{\omega^i}(f\otimes\chi),\quad \vec{L}^{\omega^i}_{2,n,D}(f)\equiv \vec{L}_{2,D}^{\omega^i}(f)\bmod (2,T^{d+1}).
\end{align}
Since $\prj_n(U_{\chi})=U_{n,\chi}$ for all $n$, we obtain
\begin{align}\label{eq:n-th_congruence}
F_n^*\equiv F^*\bmod(2,T^{d+1})
\end{align}
for $n\geq n_0$ such that $2^n\geq d+1$. For such $n$,
\begin{align}
\ord_T(\overline{F_n^*})=d.
\end{align}
Since $2^n-q_n^*\to\infty$ as $n\to\infty$, there exists $n_1\in\N$ such that, for all $n\geq n_1$,
\begin{align}
2^n-q_n^*>d.
\end{align}
Therefore, for all $n\geq \max\{n_0,n_1\}$ satisfying $2^n\geq d+1$, we have
\begin{align}
T^{q_n^*}\overline{F_n^*}\neq 0
\end{align}
in $\F_2\llbracket T\rrbracket/(T^{2^n})$. Indeed, we see that the $T$-adic order of one of the components of 
\begin{align}
(\vec{L}^{\omega^i}_{2,n,1}(f\otimes\chi)-U_{n,\chi}\vec{L}_{2,n,D}^{\omega^i}(f))C_1(f)\cdots C_n(f)
\end{align}
is equal to $d+q_n^*<2^n$ by \eqref{eq:prod_C_i}. This contradicts \eqref{eq:mod_2_vanish} and we have
\begin{align}
L^*_{2,1}(f\otimes\chi,\omega^i,T)\equiv U_{\chi}L^*_{2,D}(f,\omega^i,T)\bmod 2.
\end{align}
The congruence \eqref{cong_twist_L} follows from Proposition \ref{Euler_factor_difference} and the fact that $\mu_2(h_{\ell}(f_E,\omega^i,T))=0$ for each $\ell\mid D$.
\end{proof}
\begin{prop}
Fix $*\in\{\sharp,\flat\}$ and a tame character $\omega^i:\Delta\to \C_2^{\times}$. Assume that $\mu_2^*(E,\omega^i)=0$. Then we have
\begin{align}\label{eq:lambda_twist_chai}
\mu_2^*(E^D,\omega^i)=\mu_{2,D}^*(E,\omega^i)=0,\quad \lambda^*_2(E^D,\omega^i)=\lambda_{2,D}^*(E,\omega^i).
\end{align}
\end{prop}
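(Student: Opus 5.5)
The argument combines the congruence \eqref{eq:cong_sharp} of Theorem \ref{Theorem:congruence_relation} with the factorization of Proposition \ref{Euler_factor_difference}. The plan has three steps.

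\textbf{Step 1: $\mu_{2,D}^*(E,\omega^i)=0$.} Writing $D=\ell_1\cdots\ell_r$, formula \eqref{eq:mu_conductor_differ} gives
\begin{align}
\mu_{2,D}^*(E,\omega^i)=\mu_2^*(E,\omega^i)+\sum_j\mu_2(h_{\ell_j}(f_E,\omega^i,T)).
\end{align}
So it suffices to show $\mu_2(h_\ell)=0$ for each prime $\ell\mid D$, which the proof of Theorem \ref{Theorem:congruence_relation} already uses. To check it directly, reduce $h_\ell$ modulo $2$. Since $\omega^i(\ell)=\pm1$, we get
\begin{align}
\overline{h_\ell}=\overline{a_\ell}+(1+T)^{t(\ell)}+\varepsilon(\ell)(1+T)^{-t(\ell)}.
\end{align}
\emph{Case $\ell\mid N$.} Then $\varepsilon(\ell)=0$. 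Here $t(\ell)\neq0$, because $\ell\equiv1\bmod 4$ and $\ell\neq1$ give $\langle\ell\rangle\neq1$. Hence $(1+T)^{t(\ell)}\equiv 1+t(\ell)T+\cdots$ is nonconstant. Whatever the parity of $a_\ell$, the reduction $\overline{h_\ell}$ is then either a unit or a nonzero series, so it is not $0$.

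\emph{Case $\ell\nmid N$.} Using the expansion $(1+T)^{t}+(1+T)^{-t}$ with $t=t(\ell)\neq 0$, the series
\begin{align}
(1+T)^{-t}\bigl((1+T)^{2t}+1\bigr)
\end{align}
is nonzero mod $2$, since $(1+T)^{2t}\neq 1$ in $\F_2\llbracket T\rrbracket$ when $t\neq0$. If $a_\ell$ is odd, then $\overline{h_\ell}$ has constant term $1+1+1\neq0$. If $a_\ell$ is even, then $\overline{h_\ell}=(1+T)^{-t}((1+T)^{2t}+1)\neq 0$. In both cases $\mu_2(h_\ell)=0$.

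\textbf{Step 2: transfer to $E^D$.} By Proposition \ref{Prop:primitive_twist_coefficient}, $f\otimes\chi=f_{E^D}$, so $L^*_{2,1}(f\otimes\chi,\omega^i,T)=L_2^*(E^D,\omega^i,T)$. Congruence \eqref{eq:cong_sharp} then says
\begin{align}
\overline{L_2^*(E^D,\omega^i,T)}=\overline{U_\chi}\cdot\overline{L_{2,D}^*(E,\omega^i,T)}
\end{align}
in $\F_2\llbracket T\rrbracket$. By the lemma preceding Theorem \ref{Theorem:congruence_relation}, $\mu_2(U_\chi)=\lambda_2(U_\chi)=0$, so $\overline{U_\chi}$ is a unit in $\F_2\llbracket T\rrbracket$. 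By Step 1, $\overline{L_{2,D}^*(E,\omega^i,T)}\neq 0$. Hence $\overline{L_2^*(E^D,\omega^i,T)}\neq0$, which is exactly $\mu_2^*(E^D,\omega^i)=0$.

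\textbf{Step 3: compare $\lambda$-invariants.} When $\mu=0$, the $\lambda$-invariant of $F\in\Z_2\llbracket T\rrbracket$ equals $\ord_T(\overline F)$. Multiplying by the unit $\overline{U_\chi}$ does not change $\ord_T$. Therefore
\begin{align}
\lambda_2^*(E^D,\omega^i)=\ord_T\overline{L_{2,D}^*(E,\omega^i,T)}=\lambda_{2,D}^*(E,\omega^i).
\end{align}

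The only step with any content is Step 1, namely the nonvanishing of $\overline{h_\ell}$. The key input there is $t(\ell)\neq 0$, which holds because $\ell\equiv1\bmod4$ and $\ell\neq1$, together with the characteristic-$2$ identity $(1+T)^{2t}=(1+T^2)^t$, which is not $1$ for $t\neq0$. Everything else is formal from Theorem \ref{Theorem:congruence_relation}.
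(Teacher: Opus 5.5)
Your proof is correct and follows the paper's own argument. You get $\mu_{2,D}^*(E,\omega^i)=0$ from \eqref{eq:mu_conductor_differ} together with $\mu_2(h_\ell)=0$, and then transfer $\mu=0$ and the equality of $\lambda$-invariants through \eqref{eq:cong_sharp}, using that $\overline{U_\chi}$ is a unit; your explicit check that $\overline{h_\ell}\neq0$ usefully fills in a fact the paper only asserts.

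There is one slip in Step 1. A prime $\ell\mid D$ need not satisfy $\ell\equiv1\bmod 4$ (e.g.\ $D=21$), but $t(\ell)\neq0$ holds for every odd prime anyway, since $\langle\ell\rangle=\pm\ell\neq1$. Also, when $t(\ell)$ is even, the nonconstancy of $(1+T)^{t(\ell)}$ mod $2$ comes from $(1+T)^{2^ku}\equiv(1+T^{2^k})^u$ rather than from the linear term.
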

\begin{proof}
Combining \eqref{eq:mu_conductor_differ}
\begin{align}
\mu_{2,D}^*(E,\omega^i)=\mu_2^*(E,\omega^i)+\sum_{\ell\mid D}\mu_2(h_{\ell}(E,\omega^i,T))
\end{align}
with $\mu_2(h_{\ell}(E,\omega^i,T))=0$ for each $\ell\mid D$, we have $\mu_{2,D}^*(E,\omega^i)=0$. Moreover, by the congruence \eqref{eq:cong_sharp}, we also obtain $\mu_2^*(E^D,\omega^i)=0$. Therefore, since $\lambda_2(U_{\chi})=0$ and the congruence \eqref{eq:cong_sharp} implies 
\begin{align}
\lambda^*_2(E^D,\omega^i)=\lambda_{2,D}^*(E,\omega^i).
\end{align}

\end{proof}
\begin{lem}\label{Lem:Iwasawa_Euler_factor}
Assume that $N_E$ is square-free. We put, for each rational prime $\ell\neq2$,
\begin{align}
n_{\ell}:=\ord_2\left(\frac{\ell^2-1}{8}\right).
\end{align}
For each prime $\ell$ of good reduction for $E$, let $\tilde{E}_{\ell}$ be the reduction of $E$ modulo $\ell$. Then we have
\begin{align}\label{eq:lambda_differ}
\lambda_2(h_{\ell}(E,\omega^i,T)):=\lambda_2(h_{\ell}(f_E,\omega^i,T))=\begin{cases}
0&(\ell\nmid N_E,\ 2\nmid\sharp \tilde{E}_{\ell}(\F_{\ell})),\\
2^{n^{\ell}+1}&(\ell\nmid N_E,\ 2\mid\sharp \tilde{E}_{\ell}(\F_{\ell})),\\
2^{n^{\ell}}&(\ell \mid N_E).
\end{cases}
\end{align}
\end{lem}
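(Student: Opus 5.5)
The plan is to compute $\lambda_2(h_\ell)$ directly from the reduction of $h_{\ell}(f_E,\omega^i,T)$ modulo $2$. If this reduction is nonzero in $\F_2\llbracket T\rrbracket$, then $\mu_2(h_\ell)=0$ and $\lambda_2(h_\ell)$ is its $T$-adic order. This also supplies the fact $\mu_2(h_\ell)=0$ that was used in the proof of Theorem \ref{Theorem:congruence_relation}.

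\emph{Step 1: the exponent $t(\ell)$.} Since $\omega^{\pm i}(\ell)\in\{\pm1\}$, it is $\equiv 1$ modulo $2$, so
\begin{align}
\overline{h_\ell}=\overline{a_\ell}+(1+T)^{t(\ell)}+\varepsilon(\ell)(1+T)^{-t(\ell)}\in\F_2\llbracket T\rrbracket .
\end{align}
Let $\langle\ell\rangle=\pm\ell\in 1+4\Z_2$ be the wild part, so that $5^{t(\ell)}=\langle\ell\rangle$. Because $\log_2$ identifies $1+4\Z_2$ with $4\Z_2$ and $\ord_2(\log_2 5)=2$, we get $\ord_2(t(\ell))=\ord_2(\langle\ell\rangle-1)-2$. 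Moreover $\ell^2-1=(\langle\ell\rangle-1)(\langle\ell\rangle+1)$ and $\ord_2(\langle\ell\rangle+1)=1$. Hence
\begin{align}
\ord_2(t(\ell))=\ord_2(\ell^2-1)-3=n_\ell .
\end{align}
Write $t(\ell)=2^{n_\ell}u$ with $u\in\Z_2^\times$. Modulo $2$ we have $(1+T)^{t(\ell)}=(1+T^{2^{n_\ell}})^{u}$. Since $u$ is a unit, $(1+T)^{\pm t(\ell)}-1$ has $T$-adic order exactly $2^{n_\ell}$.

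\emph{Step 2: case split.} If $\ell\mid N_E$, then $\varepsilon(\ell)=0$. Because $N_E$ is square-free, $E$ has multiplicative reduction at $\ell$, so $a_\ell=\pm1$. Then $\overline{h_\ell}=1+(1+T)^{t(\ell)}$, which has order $2^{n_\ell}$.

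If $\ell\nmid N_E$, then $\sharp\tilde E_\ell(\F_\ell)=\ell+1-a_\ell$. Since $\ell$ is odd, this number is even exactly when $a_\ell$ is even.
\begin{itemize}
\item If $a_\ell$ is odd, the constant term of $\overline{h_\ell}$ is $1+1+1=1\neq0$. So $\overline{h_\ell}$ is a unit and $\lambda_2(h_\ell)=0$.
\item If $a_\ell$ is even, set $X=(1+T)^{t(\ell)}$. Then $\overline{h_\ell}=X+X^{-1}=X^{-1}(X+1)^2$ in characteristic $2$. This has order $2\cdot 2^{n_\ell}=2^{n_\ell+1}$.
\end{itemize}
In every case $\overline{h_\ell}\neq0$, which gives $\mu_2(h_\ell)=0$. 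The exponent written $n^{\ell}$ in \eqref{eq:lambda_differ} is the quantity $n_\ell$ defined in the statement.

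\emph{Main obstacle.} The only delicate point is the valuation identity $\ord_2(t(\ell))=n_\ell$. This needs care with the normalization $\langle\ell\rangle=\pm\ell$ and with the fact that $5$ generates $1+4\Z_2$; the rest is characteristic-$2$ bookkeeping.
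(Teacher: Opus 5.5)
Your proof is correct and follows essentially the same route as the paper: reduce $h_\ell$ modulo $2$, use $\ord_2(t(\ell))=n_\ell$ to get that $(1+T)^{\pm t(\ell)}-1$ has $T$-adic order $2^{n_\ell}$, and split into $\ell\mid N_E$ (where $a_\ell=\pm1$) and $\ell\nmid N_E$ (where $a_\ell\equiv\sharp\tilde{E}_\ell(\F_\ell)\bmod 2$, and the even case is a unit times $(X+1)^2$ with $X=(1+T)^{t(\ell)}$). The only differences are cosmetic: you compute $\ord_2(t(\ell))$ uniformly via $\langle\ell\rangle=\pm\ell$ rather than splitting into $\ell\equiv 1,3\bmod 4$, and you explicitly record that $\overline{h_\ell}\neq0$, i.e.\ $\mu_2(h_\ell)=0$, a fact the paper uses elsewhere without stating it separately.
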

\begin{rem}
In the case of odd primes and higher weight modular forms, a general calculation was carried out by Pratap--Ray \cite[Propositions 3.3 and 3.4]{PR25+} (see also \cite[Lemmas 3.3 and 3.4]{Mat00}). Pratap and Ray gave a detailed proof in the higher weight case, whereas we give an explicit formula and a simpler proof in the weight-two case at $p=2$.

\end{rem}
\begin{proof}
For simplicity, we put
\begin{align}
X:=1+T,\quad t:=t(\ell),\quad \omega:=\omega^i(\ell)\in\{\pm1\}.
\end{align}
Note that
\begin{align}
h_{\ell}(T):=h_{\ell}(E,\omega^i,T)=\begin{cases}
a_{\ell}-\omega X^t-\omega^{-1}X^{-t}&(\ell\nmid N_E),\\
a_{\ell}-\omega X^t&(\ell\mid N_E).
\end{cases}
\end{align}
Since 
\begin{align}
\ord_2\left(\frac{\ell^2-1}{8}\right)=\begin{cases}
\ord_2(\ell-1)-2&(\text{if }\ell\equiv 1\bmod 4),\\
\ord_2(\ell+1)-2&(\text{if }\ell\equiv 3\bmod 4),
\end{cases}
\end{align}
and 
\begin{align}
\ord_2(t)=\ord_2(\log_2(5^t))-2=\ord_2(5^t-1)-2,
\end{align}
we have
\begin{align}
\lambda_2(X^t-1)=2^{\ord_2(t)}=2^{\ord_2\left(\frac{\ell^2-1}{8}\right)}.
\end{align}

\noindent (i) If $\ell\nmid 2N_E$, since $\varepsilon(\ell)=1$ and $X^t\in \Z_2\llbracket T\rrbracket^{\times}$, we have $\lambda_2(X^th_{\ell}(T))=\lambda_2(h_{\ell}(T))$. From the calculation
\begin{align}
X^th_{\ell}(T)&= a_{\ell}X^t-\omega X^{2t}-\omega^{-1}\\
&\equiv a_{\ell}X^t+X^{2t}+1\bmod 2\\
&\equiv \sharp \tilde{E}_{\ell}(\F_{\ell})X^t+X^{2t}+1\bmod 2\\
&\equiv \begin{cases}
X^{2t}+X^t+1\bmod 2&(\text{if } 2\nmid \sharp \tilde{E}_{\ell}(\F_{\ell})),\\
(X^t-1)^2\bmod 2&(\text{if } 2\mid \sharp \tilde{E}_{\ell}(\F_{\ell})),
\end{cases}
\end{align}
we obtain
\begin{align}
\lambda_2(h_{\ell}(T))=\begin{cases}
0&(\text{if } 2\nmid \sharp \tilde{E}_{\ell}(\F_{\ell})),\\
2^{n_{\ell}+1}&(\text{if } 2\mid \sharp \tilde{E}_{\ell}(\F_{\ell})).
\end{cases}
\end{align}

\noindent (ii) If $\ell\mid N_E$, since $\varepsilon(\ell)=0$ and $a_{\ell}\equiv \pm1\bmod 2$, we have
\begin{align}
\lambda_2(h_{\ell}(T))&=\lambda_2(a_{\ell}-\omega X^t)\\
&=\lambda_2(X^t-1)\\
&=2^{n_{\ell}}.
\end{align}
\end{proof}
We prove the Matsuno-type formula for the sharp/flat $2$-adic $L$-functions.
\begin{thm}\label{Thm:Matsuno_formula}
Let $E$ be an elliptic curve over $\Q$ with good supersingular reduction at $2$, and let $D>0$ be a square-free integer such that $D\equiv 1\bmod 4$. Fix $*\in\{\sharp,\flat\}$ and a tame character $\omega^i:\Delta\to \C_2^{\times}$. Assume that the conductor $N_E$ of $E$ is square-free and $\mu_2^*(E,\omega^i)=0$. Then we have $\mu_2^*(E^D,\omega^i)=0$, and
\begin{align}\label{eq:Matsuno-type_formula}
\lambda_2^*(E^D,\omega^i)=\lambda_2^*(E,\omega^i)+\sum_{\substack{\ell\mid D\\ \ell\mid N_E}}2^{n_{\ell}}+\sum_{\substack{\ell\mid D\\ \ell\nmid N_E,2\mid \sharp\tilde{E}_{\ell}(\F_{\ell})}} 2^{n_{\ell}+1}.
\end{align}
\end{thm}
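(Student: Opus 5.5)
The formula follows by chaining three results already established in this section.

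First, the vanishing of $\mu$ and the equality of $\lambda$-invariants. The preceding proposition gives, under $\mu_2^*(E,\omega^i)=0$, that
\begin{align}
\mu_2^*(E^D,\omega^i)=\mu_{2,D}^*(E,\omega^i)=0,\qquad \lambda_2^*(E^D,\omega^i)=\lambda_{2,D}^*(E,\omega^i).
\end{align}
Its input is the congruence \eqref{eq:cong_sharp} of Theorem \ref{Theorem:congruence_relation}, together with $\mu_2(U_\chi)=\lambda_2(U_\chi)=0$ and $\mu_2(h_\ell)=0$. Here we use Proposition \ref{Prop:primitive_twist_coefficient} to identify $f_{E^D}=f\otimes\chi$. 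This step requires $N_E$ square-free, $D>0$ and $D\equiv1\bmod 4$, all of which are hypotheses of the theorem. It also uses that $E^D$ is again supersingular at $2$, which holds because $a_2(f\otimes\chi)=\chi(2)a_2(f)$ is still even.

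Second, we express $\lambda_{2,D}^*(E,\omega^i)$ in terms of $\lambda_2^*(E,\omega^i)$. By \eqref{eq:lambda_conductor_differ}, which comes from applying Proposition \ref{Euler_factor_difference} once for each prime of the square-free $D$, we have
\begin{align}
\lambda_{2,D}^*(E,\omega^i)=\lambda_2^*(E,\omega^i)+\sum_{\ell\mid D}\lambda_2(h_\ell(E,\omega^i,T)).
\end{align}
Each summand is finite because $\mu_2(h_\ell)=0$.

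Third, we substitute the values of $\lambda_2(h_\ell)$ from Lemma \ref{Lem:Iwasawa_Euler_factor}. Every $\ell\mid D$ is odd, since $D$ is odd. Primes $\ell\mid D$ with $\ell\nmid N_E$ and $\sharp\tilde E_\ell(\F_\ell)$ odd contribute $0$. Primes with $\ell\nmid N_E$ and $\sharp\tilde E_\ell(\F_\ell)$ even contribute $2^{n_\ell+1}$. Primes with $\ell\mid N_E$ contribute $2^{n_\ell}$. Splitting the sum into these three cases gives exactly \eqref{eq:Matsuno-type_formula}.

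The genuine difficulty is the mod-$2$ congruence in Theorem \ref{Theorem:congruence_relation}, which was already proved. In the assembly itself, the only point needing care is the lemma's case $\ell\mid N_E$. Its proof uses $a_\ell\equiv\pm1\pmod 2$, which relies on $E$ having multiplicative reduction at $\ell$ so that $a_\ell=\pm1$; this is where square-freeness of $N_E$ enters. The lemma covers this case, so it only needs to be cited.
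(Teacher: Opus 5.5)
Your proposal is correct and matches the paper's proof, which is the same chain: \eqref{eq:lambda_twist_chai} (from the congruence of Theorem \ref{Theorem:congruence_relation}), then \eqref{eq:lambda_conductor_differ}, then the case analysis of Lemma \ref{Lem:Iwasawa_Euler_factor}. One small remark: your parity argument for $a_2(f\otimes\chi)$ gives supersingularity only after you know $E^D$ has good reduction at $2$, and that holds because $\Q(\sqrt{D})/\Q$ is unramified at $2$ when $D\equiv 1\bmod 4$.
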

\begin{rem}
The quantity $2^{n_{\ell}}$ is the number of the primes of $\Q_{\infty}$ lying above $\ell$. 
Indeed, if $\Frob_{\ell}$ denotes the Frobenius element at $\ell$ in $\Gamma$,
\begin{align}
[\Gamma:\overline{\langle \Frob_{\ell}\rangle}]=[\Z_2:t(\ell)\Z_2]=2^{n_{\ell}}.
\end{align}
Thus, Theorem \ref{Thm:Matsuno_formula} may be viewed as a Kida-type (Riemann--Hurwitz type) formula.
\end{rem}
\begin{proof}
The theorem follows by combining the formulas obtained in this subsection as
\begin{align}
\lambda^*_2(E^D,\omega^i)&\overset{\eqref{eq:lambda_twist_chai}}{=}\lambda_{2,D}^*(E,\omega^i)\\
&\overset{\eqref{eq:lambda_conductor_differ}}{=}\lambda_2^*(E,\omega^i)+\sum_{\ell\mid D}\lambda_2(h_{\ell}(E,\omega^i,T))\\
&\overset{\eqref{eq:lambda_differ}}{=}\lambda_2^*(E,\omega^i)+\sum_{\substack{\ell\mid D\\ \ell\mid N_E}}2^{n_{\ell}}+\sum_{\substack{\ell\mid D\\ \ell\nmid N_E,2\mid \sharp\tilde{E}_{\ell}(\F_{\ell})}} 2^{n_{\ell}+1}.
\end{align}
\end{proof}
\begin{rem}
Under the hypotheses
\begin{align}
\mu_2^{\sharp}(E,\omega^0)=\mu_2^{\flat}(E,\omega^0)=\mu_2^{\sharp}(E,\omega^1)=\mu_2^{\flat}(E,\omega^1)=0,
\end{align}
Theorem \ref{Thm:Matsuno_formula} also implies that all four sharp/flat $\lambda$-invariants vary in parallel under quadratic twists:
\begin{align}
&(\lambda_2^{\sharp}(E^D,\omega^0),\lambda_2^{\flat}(E^D,\omega^0),\lambda_2^{\sharp}(E^D,\omega^1),\lambda_2^{\flat}(E^D,\omega^1))\\
&\ \ \ \ -(\lambda_2^{\sharp}(E,\omega^0),\lambda_2^{\flat}(E,\omega^0),\lambda_2^{\sharp}(E,\omega^1),\lambda_2^{\flat}(E,\omega^1))=\Delta_E(D)(1,1,1,1).
\end{align}
\end{rem}
As an application of Theorem \ref{Thm:Matsuno_formula}, we can make arbitrarily large sharp/flat Iwasawa $\lambda$-invariants by quadratic twists as follows.
 \begin{lem}\label{Lem:linear_disjoint}
For an elliptic curve $E$ over $\Q$ with good supersingular reduction at $2$ and  a nonnegative integer $m$, we see that
\begin{align}
\Q(E[2])\cap \Q(\zeta_{2^m})=\Q.
\end{align}
\end{lem}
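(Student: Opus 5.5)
We show that $K:=\Q(E[2])\cap\Q(\zeta_{2^m})$ is trivial by comparing Galois groups. By Proposition \ref{Prop:mod_2_surjective}, $\bar{\rho}_{E,2}$ is surjective, so $\gal(\Q(E[2])/\Q)\cong\GL_2(\F_2)\cong S_3$. The field $K$ is Galois over $\Q$, being an intersection of two Galois extensions. Hence $\gal(K/\Q)$ is a quotient of $S_3$ and also a quotient of the abelian group $\gal(\Q(\zeta_{2^m})/\Q)\cong(\Z/2^m\Z)^{\times}$.

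The abelian quotients of $S_3$ are the trivial group and $S_3/A_3\cong\Z/2\Z$. So either $K=\Q$, or $K$ is the unique quadratic subfield of $\Q(E[2])$. It therefore suffices to show that this quadratic field is not contained in $\Q(\zeta_{2^m})$.

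The unique quadratic subfield of $\Q(E[2])$ is $\Q(\sqrt{\Delta_E})$, where $\Delta_E$ is the discriminant, since $\Q(E[2])$ is the splitting field of the $2$-division cubic. The quadratic subfields of $\Q(\zeta_{2^m})$ are $\Q(i)$, $\Q(\sqrt2)$ and $\Q(\sqrt{-2})$, and all three are ramified at $2$.

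The key step, and the main obstacle, is to show that $\Q(\sqrt{\Delta_E})$ is \emph{unramified} at $2$. We argue locally, again using Serre's description from the proof of Proposition \ref{Prop:mod_2_surjective}.

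\begin{itemize}
\item Since $E$ is supersingular at $2$, the inertia group $I_2$ acts on $E[2]$ through the fundamental character of level $2$. Its image is $\F_4^{\times}\cong\Z/3\Z$, which lies in $A_3\subset S_3$.
\item Consequently the composite $G_{\Q_2}\to S_3\to\{\pm1\}$ is trivial on $I_2$.
\item This composite is exactly the character cutting out $\Q_2(\sqrt{\Delta_E})$, so this local extension is unramified.
\end{itemize}

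As an alternative route to the same conclusion: one can take a minimal model with good reduction at $2$, so that $\Delta_E\in\Z_2^{\times}$. A unit discriminant gives a quadratic extension of $\Q_2$ unramified at $2$ once one checks that $\Delta_E\equiv1\bmod 4$. That congruence is the delicate part of this route. It can be avoided by using the inertia argument above, or by noting that the discriminant of the $2$-division polynomial of a model with good reduction generates an unramified extension, because $E[2]$ extends to a finite flat group scheme over $\Z_2$ whose étale quotient structure is controlled.

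Since $\Q(\sqrt{\Delta_E})$ is unramified at $2$ while every quadratic subfield of $\Q(\zeta_{2^m})$ is ramified at $2$, the quadratic field is not contained in $\Q(\zeta_{2^m})$. This rules out the second case, and therefore $K=\Q$.
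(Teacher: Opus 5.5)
Your argument is correct and is essentially the paper's proof: both use Serre's description $\bar{\rho}_{E,2}(I_2)\cong\F_4^{\times}\subset A_3$ to see that the quadratic subfield of $\Q(E[2])$ is unramified at $2$, and both then use that every quadratic subfield of $\Q(\zeta_{2^m})$ is ramified at $2$. The only difference is that the paper bounds $[K:\Q]\le 2$ using just $[\Q(E[2]):\Q]\mid 6$, where you invoke full surjectivity; your alternative routes are unnecessary, and the finite flat group scheme one is not a real justification as stated, since in the supersingular case $E[2]$ is connected and has no étale quotient to control.
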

\begin{proof}
By the result of Serre \cite[Proposition 12]{Ser72}, we have $\bar{\rho}_{E,2}(I_2)\cong \F_4^{\times}\cong A_3$, where $I_2$ is the inertia group at $2$ and $A_3$ is the alternating group of order $3$. Let $\mathfrak{S}_3$ be the symmetric group on three letters. Since the image of $I_2\to \mathfrak{S}_3\twoheadrightarrow \mathfrak{S}_3/A_3$ is trivial, $2$ is unramified in the unique quadratic subfield of $\Q(E[2])$, if it exists. Put
\begin{align}
L:=\Q(E[2])\cap \Q(\zeta_{2^m}).
\end{align}
Since $[\Q(E[2]):\Q]\mid 6$ and $[\Q(\zeta_{2^m}):\Q]$ is a power of $2$, we have $[L:\Q]\leq 2$.
For $m\geq 3$, the quadratic subfields of $\Q(\zeta_{2^m})/\Q$ are 
\begin{align}
\Q(i),\quad \Q(\sqrt{2}),\quad \Q(\sqrt{-2}),
\end{align}
and all of them are ramified at $2$. The cases $m\leq 2$ are trivial. Therefore, we see that $\Q(E[2])\cap \Q(\zeta_{2^m})=\Q$.
\end{proof}

\begin{cor}\label{Cor:lambda_arbitary_large}
Let $E$ be an elliptic curve defined over $\Q$ with good supersingular reduction at $2$ and square-free conductor. Fix $*\in\{\sharp,\flat\}$ and a character $\omega^i:(\Z/4\Z)^{\times}\to\C_2^{\times}$. Assume that $\mu_2^*(E,\omega^i)=0$. Then the set
\begin{align}
\{\lambda_2^*(E^{\ell},\omega^i)\ |\ \ell\text{ is a rational prime and }\ell\equiv1\bmod 4\}
\end{align}
is infinite. More precisely, for each nonnegative integer $k$, the density of the set
\begin{align}
\{\ell\text{ is a rational prime and }\ell\equiv 1\bmod 4\ |\ \lambda_2^*(E^{\ell},\omega^i)=\lambda_2^*(E,\omega^i)+2^{k+1}\}
\end{align}
is $\frac{1}{3\cdot 2^{k+1}}$.
\end{cor}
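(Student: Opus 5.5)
The plan is to specialize Theorem \ref{Thm:Matsuno_formula} to $D=\ell$ for a prime $\ell\equiv 1\bmod 4$, and then turn the resulting case description into a Chebotarev density computation in the compositum of $\Q(E[2])$ and a cyclotomic field $\Q(\zeta_{2^{k+3}})$. Linear disjointness of these two fields is supplied by Lemma \ref{Lem:linear_disjoint}.

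\emph{Specializing the formula.} Since $\mu_2^*(E,\omega^i)=0$ and $\ell>0$, $\ell\equiv1\bmod 4$ is square-free, Theorem \ref{Thm:Matsuno_formula} applies and gives three possibilities:
\begin{itemize}
\item $\lambda_2^*(E^\ell,\omega^i)=\lambda_2^*(E,\omega^i)+2^{n_\ell}$ if $\ell\mid N_E$;
\item $\lambda_2^*(E^\ell,\omega^i)=\lambda_2^*(E,\omega^i)+2^{n_\ell+1}$ if $\ell\nmid N_E$ and $2\mid\sharp\tilde E_\ell(\F_\ell)$;
\item $\lambda_2^*(E^\ell,\omega^i)=\lambda_2^*(E,\omega^i)$ otherwise.
\end{itemize}
Only finitely many $\ell$ divide $N_E$, so they do not affect densities. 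Hence, up to a density-zero set, the set in question is
\[
\{\ell\equiv1\bmod4:\ \ell\nmid 2N_E,\ n_\ell=k,\ 2\mid\sharp\tilde E_\ell(\F_\ell)\}.
\]
For $\ell\equiv 1\bmod 4$ we have $n_\ell=\ord_2(\ell-1)-2$, as noted in the proof of Lemma \ref{Lem:Iwasawa_Euler_factor}. So the condition $n_\ell=k$ is equivalent to $\ell\equiv 1+2^{k+2}\bmod 2^{k+3}$. This is a condition on $\Frob_\ell$ in $\gal(\Q(\zeta_{2^{k+3}})/\Q)\cong(\Z/2^{k+3}\Z)^\times$, and it singles out exactly one residue class out of $2^{k+2}$. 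That class automatically lies in $1+4\Z$.

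\emph{The Galois condition at $2$-torsion.} For $\ell\nmid 2N_E$, the group $\tilde E_\ell(\F_\ell)$ has even order if and only if $\Frob_\ell$ fixes a nonzero point of $E[2]$. By Proposition \ref{Prop:mod_2_surjective}, $\gal(\Q(E[2])/\Q)\cong\GL_2(\F_2)\cong\mathfrak S_3$, acting on the three nonzero points of $E[2]$. The identity and the three transpositions fix a nonzero point, while the two $3$-cycles do not. Therefore the condition is that $\Frob_\ell$ lies in a union of conjugacy classes of total proportion $4/6=2/3$.

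\emph{Combining via Chebotarev.} By Lemma \ref{Lem:linear_disjoint} with $m=k+3$, the fields $\Q(E[2])$ and $\Q(\zeta_{2^{k+3}})$ are linearly disjoint. Hence
\[
\gal(\Q(E[2],\zeta_{2^{k+3}})/\Q)\cong\mathfrak S_3\times(\Z/2^{k+3}\Z)^\times,
\]
and the two conditions define the conjugation-stable subset $C\times\{1+2^{k+2}\}$, where $C$ is the set of elements of order at most $2$ in $\mathfrak S_3$. The Chebotarev density theorem then gives natural density
\[
\frac{2}{3}\cdot\frac{1}{2^{k+2}}=\frac{1}{3\cdot 2^{k+1}}
\]
among all rational primes. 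This is the density in the statement, so I read ``density'' as relative to all primes; relative to primes $\ell\equiv1\bmod4$ it would be twice this.

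\emph{Conclusion and main obstacle.} Since the densities are positive for every $k\ge0$, each value $\lambda_2^*(E,\omega^i)+2^{k+1}$ is attained. These values are pairwise distinct, so the set of $\lambda$-invariants is infinite. The only delicate point is the translation of $2\mid\sharp\tilde E_\ell(\F_\ell)$ into a union of conjugacy classes of $\mathfrak S_3$, combined with the linear-disjointness input; both are already available from Proposition \ref{Prop:mod_2_surjective} and Lemma \ref{Lem:linear_disjoint}.
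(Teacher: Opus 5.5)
Your proposal is correct and follows the paper's proof: specialize Theorem \ref{Thm:Matsuno_formula} to $D=\ell$, translate $n_\ell=k$ into $\ell\equiv 1+2^{k+2}\bmod 2^{k+3}$, and apply Chebotarev in $\Q(E[2])\Q(\zeta_{2^{k+3}})$, whose Galois group is $\mathfrak{S}_3\times(\Z/2^{k+3}\Z)^{\times}$ by Proposition \ref{Prop:mod_2_surjective} and Lemma \ref{Lem:linear_disjoint}, to get $\frac{4}{6}\cdot\frac{1}{2^{k+2}}$. You also explicitly discard the finitely many $\ell\mid N_E$, where the jump $2^{n_\ell}$ could itself equal $2^{k+1}$; the paper's ``if and only if'' glosses over this. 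Your reading of the density as relative to all primes matches the paper's computation.
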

\begin{proof}
By Proposition \ref{Prop:mod_2_surjective} and Lemma \ref{Lem:linear_disjoint}, we see that
\begin{align}
\gal(\Q(E[2])\Q(\zeta_{2^{k+3}})/\Q)\cong \mathfrak{S}_3\times (\Z/2^{k+3}\Z)^{\times}.
\end{align}
For each rational prime $\ell\nmid N_E$, the condition $\widetilde{E}_{\ell}(\F_{\ell})[2]\neq\{O\}$ holds if and only if the Frobenius element $\Frob_{\ell}|_{\mathfrak{S}_3}$ fixes at least one element of $E[2]\backslash\{O\}$. Therefore, by the Chebotarev density theorem, the  density of the set
\begin{align}
\mathcal{W}:=\{\ell\text{ is a rational prime}\ |\ \ell\nmid N_E,\ \ell\equiv 1+2^{k+2}\bmod 2^{k+3},\ \widetilde{E}_{\ell}(\F_{\ell})[2]\neq\{O\}\}
\end{align}
is 
\begin{align}
\frac{4}{6}\cdot \frac{1}{2^{k+2}}=\frac{1}{3\cdot 2^{k+1}}.
\end{align}
For each element $\ell\in\mathcal{W}$, we see that 
\begin{align}
n_{\ell}=\ord_2(\ell-1)+\ord_2(\ell+1)-3=(k+2)+1-3=k.
\end{align}
Therefore, by Theorem \ref{Thm:Matsuno_formula}, a rational prime $\ell$ belongs to $\mathcal{W}$ if and only if
\begin{align}
\lambda_2^*(E^{\ell},\omega^i)-\lambda_2^*(E,\omega^i)=2^{n_{\ell}+1}=2^{k+1}.
\end{align}
\end{proof}
%%%%%%%%%%%%%%%%%%%%%%

%%%%%%%%%%%%%%%%%%%%%%
\section{Distributions of sharp/flat Iwasawa invariants}\label{sec:distribution}
%%%%%%%%%%%%%%%%%%%%%%
In this section, as an application of the Matsuno-type formula \eqref{eq:Matsuno-type_formula}, we study the distribution of the Iwasawa $\lambda$-invariant of the sharp/flat $2$-adic $L$-functions for elliptic curves. This is a supersingular, 2-adic analytic analogue of \cite[Theorem 4.1]{HR24+}. We use some results of Hatley--Ray in \cite[\S\S2-4]{HR24+}.

For functions $F(X)$ and $G(X)$ defined for sufficiently large real numbers $X$, we write $F(X)\sim G(X)$ to signify that 
\begin{align}
\lim_{X\to\infty}\frac{F(X)}{G(X)}=1,
\end{align}
and write $F(X)\gg G(X)$ if there exist constants $c>0$ and $X_0\in\R$ satisfying $F(X)\geq cG(X)$ for all $X\geq X_0$.
For a set $\mho$ of rational primes, we define
\begin{align}
n_{\mho}(X):=\sharp\{d:=\ell_1\cdots\ell_r\ |\ r\in\N,\ \ell_i\notin \mho,\ 1\leq d\leq X,\ d\text{ is square-free}\},
\end{align}
and the natural density $\alpha(\mho)$ of $\mho$ by
\begin{align}
\alpha(\mho):=\lim_{X\to\infty}\frac{\sharp\{\ell\in\mho\ |\ \ell\leq X\}}{\pi(X)},
\end{align}
where $\pi$ is the prime-counting function. For an elliptic curve $E$ defined over $\Q$ with conductor $N_E$, we define the set of rational primes
\begin{align}
\mho_E:=\{\ell \nmid 2N_E\ |\ \widetilde{E}_{\ell}(\F_{\ell})[2]\neq \{O\}\}.
\end{align}
By Proposition \ref{Prop:mod_2_surjective} and the Chebotarev density theorem, we have the following lemma.
\begin{lem}\label{Lem:density_mho}
Let $E$ be an elliptic curve with good supersingular reduction at $2$. Then we have $\alpha(\mho_E)=2/3$.
\end{lem}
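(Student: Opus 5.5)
The plan is to read off the density from the Galois image of $E[2]$ via the Chebotarev density theorem. By Proposition \ref{Prop:mod_2_surjective}, $\bar{\rho}_{E,2}$ is surjective, so $G:=\gal(\Q(E[2])/\Q)\cong\GL_2(\F_2)\cong\mathfrak{S}_3$, acting on the three nonzero points of $E[2]$ as the standard permutation action on three letters. Since $\mho_E$ differs from its defining Frobenius condition only at the finitely many primes dividing $2N_E$, these can be ignored for natural density.

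First I will translate the condition on a prime $\ell\nmid 2N_E$ into a condition on Frobenius. The reduction map $E(\overline{\Q})[2]\to\widetilde{E}_{\ell}(\overline{\F_{\ell}})[2]$ is a $\gal$-equivariant isomorphism, because $\ell$ is odd and $E$ has good reduction at $\ell$. Hence $\widetilde{E}_{\ell}(\F_{\ell})[2]$ is identified with the fixed points of $\Frob_{\ell}$ on $E[2]$, and $\widetilde{E}_{\ell}(\F_{\ell})[2]\neq\{O\}$ holds if and only if $\Frob_{\ell}$, viewed in $\mathfrak{S}_3$, fixes at least one of the three nonzero points. This is the same criterion the paper already used in the proof of Corollary \ref{Cor:lambda_arbitary_large}. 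The condition depends only on the conjugacy class of $\Frob_{\ell}$.

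Next I count. An element of $\mathfrak{S}_3$ fixes a letter exactly when it is the identity or a transposition, which gives $1+3=4$ elements. The $3$-cycles fix no letter. The union of these conjugacy classes therefore has size $4$ out of $6$. Chebotarev's density theorem, in its natural-density form, then gives $\alpha(\mho_E)=4/6=2/3$.

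There is no serious obstacle here; the one point needing care is the identification of the rational $2$-torsion of $\widetilde{E}_{\ell}$ with the Frobenius-fixed vectors. This uses good reduction at $\ell$ and $\ell\neq 2$, which make the extension $\Q(E[2])/\Q$ unramified at $\ell$ and the reduction map on $2$-torsion injective.
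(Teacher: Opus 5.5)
Your proposal is correct and follows the same route as the paper, which obtains the lemma directly from the surjectivity of $\bar{\rho}_{E,2}$ (Proposition \ref{Prop:mod_2_surjective}) combined with the Chebotarev density theorem. Your identification of $\widetilde{E}_{\ell}(\F_{\ell})[2]$ with the Frobenius-fixed points of $E[2]$, and your count of $4$ out of $6$ elements of $\mathfrak{S}_3$, spell out the details that the paper leaves implicit.
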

Using the Selberg--Delange-type theorem, we have the following.
\begin{prop}[cf. {\cite[Proposition 2.12, Corollary 3.8]{HR24+}}]\label{Lem:density_omega_E}
There exists a constant $c>0$ such that 
\begin{align}
n_{\mho_E}(X)\sim \frac{cX}{(\log X)^{\frac{2}{3}}}.
\end{align}
\end{prop}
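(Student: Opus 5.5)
This is a Selberg--Delange-type statement for integers all of whose prime factors avoid $\mho_E$. The plan is to invoke the general result of Hatley--Ray \cite[Proposition 2.12, Corollary 3.8]{HR24+}, once we know that $\mho_E$ is a \emph{regular} (Chebotarev) set of primes of natural density $\alpha(\mho_E)=2/3$. The complementary set $\mho_E^c$ (excluding the finitely many primes dividing $2N_E$, which are harmless) then has density $1-2/3=1/3$. The counting function of square-free integers built from primes in $\mho_E^c$ then behaves like $cX(\log X)^{1/3-1}=cX/(\log X)^{2/3}$.

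\textbf{Step 1: $\mho_E$ is a Chebotarev set.} By Proposition \ref{Prop:mod_2_surjective}, $\gal(\Q(E[2])/\Q)\cong\mathfrak{S}_3$. For $\ell\nmid 2N_E$, the condition $\widetilde{E}_\ell(\F_\ell)[2]\neq\{O\}$ holds exactly when $\Frob_\ell$ fixes a nonzero point of $E[2]$. This happens precisely when $\Frob_\ell$ lies in the conjugacy-stable subset of $\mathfrak{S}_3$ consisting of the identity and the three transpositions, which has $4$ elements. So $\mho_E$ is, up to finitely many primes, the set of primes whose Frobenius lies in a union of conjugacy classes $\mathcal{C}$ with $|\mathcal{C}|/|G|=2/3$. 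This recovers Lemma \ref{Lem:density_mho}, and the effective Chebotarev theorem gives
\[
\sharp\{\ell\le X,\ \ell\notin\mho_E\}=\tfrac13\,\mathrm{Li}(X)+O\!\left(X(\log X)^{-A}\right).
\]

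\textbf{Step 2: the Dirichlet series.} Consider the Dirichlet series
\[
F(s)=\sum_{d}\mu^2(d)\mathbf{1}_{\{\ell\mid d\Rightarrow\ell\notin\mho_E\}}\,d^{-s}=\prod_{\ell\notin\mho_E}(1+\ell^{-s}).
\]
Write it as $F(s)=\zeta(s)^{1/3}G(s)$. Using the Artin $L$-functions of $\mathfrak{S}_3$, we have $\log F(s)=\tfrac13\log\zeta(s)+$ a combination of $\log L(s,\rho)$ for the nontrivial irreducible representations $\rho$. Those $L$-functions are holomorphic and nonvanishing near $\mathrm{Re}(s)=1$: the sign character is a Dirichlet $L$-function, and the $2$-dimensional $\rho$ is induced from a Hecke character of the quadratic subfield, hence entire. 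So $G(s)$ extends holomorphically, and without zeros, to a zero-free region of the standard shape. This is exactly the hypothesis of the Selberg--Delange theorem, as packaged in \cite[Proposition 2.12]{HR24+}.

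\textbf{Step 3: conclusion.} The Selberg--Delange theorem with exponent $z=1/3$ then yields
\[
n_{\mho_E}(X)\sim \frac{G(1)}{\Gamma(1/3)}\,X(\log X)^{-2/3}.
\]
Setting $c=G(1)/\Gamma(1/3)>0$ gives the claim; positivity holds because $G(1)$ is a convergent Euler product of positive factors.

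The main obstacle is Step 2, namely verifying the analytic continuation and zero-free hypotheses for $G(s)$. I would avoid reproving them and instead cite \cite[Corollary 3.8]{HR24+}, which treats precisely sets of primes defined by Frobenius conditions in $\gal(\Q(E[2])/\Q)\cong\mathfrak{S}_3$. The only remaining input is the density $2/3$ from Lemma \ref{Lem:density_mho}, noting that the finitely many primes dividing $2N_E$ affect only the constant $c$.
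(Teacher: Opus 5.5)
Your proposal is correct and is essentially the paper's argument: the paper simply applies Serre's Selberg--Delange-type theorem for frobenian sets \cite[Th\'{e}or\`{e}me 2.8]{Ser75}, which gives $n_{\mho_E}(X)\sim cX/(\log X)^{\alpha(\mho_E)}$, and then inserts $\alpha(\mho_E)=2/3$ from Lemma \ref{Lem:density_mho}. Your Steps 1--3 unpack the analytic input behind that theorem, namely the factorization of $\prod_{\ell\notin\mho_E}(1+\ell^{-s})$ as $\zeta(s)^{1/3}$ times powers of the Artin $L$-functions of $\mathfrak{S}_3$; as a side benefit, this treats the square-free restriction in $n_{\mho_E}(X)$ directly.
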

\begin{proof}
By \cite[Th\'{e}or\`{e}me 2.8]{Ser75}, there exists a constant $c>0$ such that
\begin{align}
n_{\mho_E}(X)\sim \frac{cX}{(\log X)^{\alpha(\mho_E)}}.
\end{align}
Using the fact that $\alpha(\mho_E)=2/3$ by Lemma \ref{Lem:density_mho} completes the proof.
\end{proof}
\begin{thm}\label{Thm:distribution_lambda}
Let $E$ be an elliptic curve defined over $\Q$ with good supersingular reduction at $2$ and square-free conductor $N_E$. Fix $*\in\{\sharp,\flat\}$ and a tame character $\omega^i:\Delta\to\{\pm1\}$. Let $N^*$ be a nonnegative integer such that $N^*\geq \lambda_2^*(E,\omega^i)$ and $\lambda_2^*(E,\omega^i)\equiv N^*\bmod 2$. Assume that $\mu^*_2(E,\omega^i)=0$. Let
\begin{align}
m_{E,N^*}^*(X):=\sharp\{D\in\Z_{>0}\ |\ D\leq X,\ D\text{ is square-free, }D\equiv1\bmod 4,\ \lambda_2^*(E^D,\omega^i)=N^*\}.
\end{align}
Then we have
\begin{align}
m_{E,N^*}^*(X)\gg \frac{X}{(\log X)^{\frac{2}{3}}}.
\end{align}
\end{thm}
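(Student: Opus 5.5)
We write $N^*=\lambda_2^*(E,\omega^i)+2s$ with $s\geq0$, and use Theorem \ref{Thm:Matsuno_formula} to reduce the problem to counting square-free $D\equiv1\bmod 4$ whose prime factors contribute exactly $2s$ to $\Delta_E(D)$. The bulk of such $D$ will be made of primes outside $\mho_E$. By Lemma \ref{Lem:Iwasawa_Euler_factor}, a prime $\ell\nmid 2N_E$ with $2\nmid\sharp\widetilde{E}_\ell(\F_\ell)$, i.e.\ $\ell\notin\mho_E$, contributes $0$. So if $D=d_0d$, where $d$ is square-free, coprime to $2N_E d_0$, and has all prime factors outside $\mho_E$, then
\begin{align}
\lambda_2^*(E^D,\omega^i)=\lambda_2^*(E,\omega^i)+\Delta_E(d_0).
\end{align}

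\textbf{Case $s=0$.} Take $d_0=1$.

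\textbf{Case $s\geq1$.} We must choose a fixed square-free $d_0$ with $\Delta_E(d_0)=2s$. Write $s=\sum_j 2^{k_j}$ in binary. For each $j$, Corollary \ref{Cor:lambda_arbitary_large} (the positive-density Chebotarev statement in its proof) supplies infinitely many primes $\ell_j\in\mho_E$ with $\ell_j\equiv1+2^{k_j+2}\bmod 2^{k_j+3}$. Each such prime satisfies $\ell_j\equiv 1\bmod 4$ and contributes $2^{k_j+1}$. Choosing the $\ell_j$ distinct and coprime to $N_E$ gives $d_0=\prod_j\ell_j$ with $d_0\equiv1\bmod 4$ and $\Delta_E(d_0)=2\sum_j2^{k_j}=2s$. (Alternatively, one could use $s$ distinct primes with $k=0$.) This uses exactly the parity hypothesis $N^*\equiv\lambda_2^*(E,\omega^i)\bmod 2$.

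It remains to show
\begin{align}
\sharp\{d\leq X/d_0 : d \text{ square-free},\ (d,2N_Ed_0)=1,\ \ell\mid d\Rightarrow \ell\notin\mho_E,\ d\equiv1\bmod4\}\gg \frac{X}{(\log X)^{2/3}}.
\end{align}
Since $d_0$ is fixed, the replacement of $X$ by $X/d_0$ only changes constants. Excluding the finitely many primes dividing $2N_Ed_0$ does not change the density of the excluded set, which stays $2/3$. Hence Proposition \ref{Lem:density_omega_E}, applied to the enlarged excluded set via \cite[Th\'{e}or\`{e}me 2.8]{Ser75}, gives the asymptotic $cX/(\log X)^{2/3}$ for the count without the congruence condition.

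\textbf{Main obstacle: the congruence $d\equiv1\bmod4$.} Proposition \ref{Lem:density_omega_E} counts all $d$, and we need a positive proportion to lie in the residue class $1\bmod 4$. I would follow \cite[\S\S2--4]{HR24+}: apply the Selberg--Delange method to the twisted Dirichlet series $\sum \chi_4(d)d^{-s}$ over the same multiplicative set, where $\chi_4$ is the nontrivial character mod $4$. The primes outside $\mho_E$ split equidistributedly between the classes $1$ and $3\bmod4$. This follows from Lemma \ref{Lem:linear_disjoint} with $m=2$ and Chebotarev in $\Q(E[2],i)$, where $\Gal\cong\mathfrak S_3\times(\Z/4\Z)^\times$. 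Consequently the twisted sum has effective density $0$ on these primes and is $O(X/(\log X)^{1})=o(X/(\log X)^{2/3})$. Thus the count of $d\equiv 1\bmod 4$ is asymptotically half the total, which is $\gg X/(\log X)^{2/3}$.

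Each such $d$ gives $D=d_0d\equiv1\bmod4$ with $\lambda_2^*(E^D,\omega^i)=N^*$, and distinct $d$ give distinct $D$. This proves the bound. The hypothesis $\mu_2^*(E,\omega^i)=0$ enters only through Theorem \ref{Thm:Matsuno_formula}.
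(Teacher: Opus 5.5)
Your proposal is correct, but it handles the congruence $D\equiv 1\bmod 4$ by a different mechanism than the paper.

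The fixed factor is essentially the paper's. The paper takes $r$ primes $\ell\equiv 5\bmod 8$ in $\mho_E$, so $n_\ell=0$ and each contributes $2$; this is your alternative. Your binary-expansion choice from the sets $\mathcal{W}$ in the proof of Corollary \ref{Cor:lambda_arbitary_large} works just as well.

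The counting step is where the routes differ, since the paper never considers a twisted sum. It fixes one auxiliary prime $q_0\equiv 3\bmod 4$ with $q_0\notin\mho_E'$, found by Chebotarev in $\Q(E[2])\Q(i)$ using Lemma \ref{Lem:linear_disjoint} with $m=2$. It then sends each square-free $d\le X/(q_0D_1)$ composed of primes outside $\mho_E'$ with $q_0\nmid d$ to $d$ or to $q_0d$, according as $d\equiv 1$ or $3\bmod 4$. This map is injective into the target set. So the problem reduces to the untwisted quantity $n_{\mho_E'}(X/q_0D_1)-n_{\mho_E'}(X/q_0^2D_1)$, which is exactly what Proposition \ref{Lem:density_omega_E} (Serre's Th\'eor\`eme 2.8) controls.

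Your route gives more: the admissible $d$ are asymptotically equidistributed between the two classes mod $4$. It also costs more. Serre's theorem as cited does not cover the twisted series $\prod_{\ell\notin\mho_E'}(1+\chi_4(\ell)\ell^{-s})$, with your $d_0$ in place of $D_1$. You would have to run Selberg--Delange with exponent $0$ yourself. That requires analytic continuation and a zero-free region for the nontrivial Artin $L$-functions of $\mathfrak{S}_3\times(\Z/4\Z)^{\times}$ appearing in the decomposition of $\mathbf{1}_{\{3\text{-cycles}\}}\cdot\chi_4$. A generic Hal\'asz-type mean value bound would not suffice in its place, since it saves only about $(\log X)^{-0.33}$, which is weaker than the $o\bigl(X/(\log X)^{2/3}\bigr)$ you need.

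For a $\gg$ statement, the paper's $q_0$ trick is the more economical choice. Your approach is what you would use if you wanted the sharp asymptotic constant.
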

\begin{proof}
The proof is a modification of the proof of \cite[Theorem 4.1]{HR24+}. We put
\begin{align}
\mathcal{Q}:=\{\ell\nmid 2N_E\ |\ \ell\equiv5\bmod 8,\ \widetilde{E}_{\ell}(\F_{\ell})[2]\neq\{O\}\}.
\end{align}
Let $L:=\Q(E[2])\Q(\zeta_8)$ and $G:=\gal(L/\Q)$. By Lemma \ref{Lem:linear_disjoint}, we have 
\begin{align}
G\cong \gal(\Q(E[2])/\Q)\times(\Z/8\Z)^{\times}.
\end{align}
The set $\mathcal{Q}$ corresponds to the subset
\begin{align}
A\times \{5\}\subset G,
\end{align}
where
\begin{align}
A:=\{\sigma\in \gal(\Q(E[2])/\Q)\ |\ E[2]^{\sigma}\neq\{O\}\}.
\end{align}
By the Chebotarev density theorem, $\mathcal{Q}$ has positive density. Each element $\ell$ of $\mathcal{Q}$ satisfies $n_{\ell}=0$. Since $\lambda_2^*(E,\omega^i)\equiv N^*\bmod 2$, we can write $N^*=\lambda^*_2(E,\omega^i)+2r$ for some $r\in \Z_{\geq0}$. We choose $r$ primes $\ell_1,\ldots,\ell_r$ in $\mathcal{Q}$ and set $D_1:=\ell_1\cdots \ell_r$.  We define
\begin{align}
\mho_E':=\mho_E\cup \{\ \ell\text{ is a rational prime }|\ \ell\mid 2N_ED_1\}.
\end{align}
By the Chebotarev density theorem for $\Q(E[2])\Q(i)/\Q$, there exists a rational prime $q_0$ such that 
\begin{align}
q_0\equiv 3\bmod 4,\quad q_0\notin \mho_E'.
\end{align}
Let $q_1,\ldots,q_s\notin\mho_E'$ be distinct primes such that $D_2:=q_1\cdots q_s\equiv 1\bmod 4$ and we put $D:=D_1D_2$. By the Matsuno-type formula \eqref{eq:Matsuno-type_formula}, we have
\begin{align}
\lambda_2^*(E^D,\omega^i)=\lambda_2^*(E,\omega^i)+\sum_{\substack{\ell\mid D\\ 2\mid \sharp\widetilde{E}_{\ell}(\F_{\ell})}}2=\lambda_2^*(E,\omega^i)+2r=N^*.
\end{align}
We define
\begin{align}
N_{\mho_E'}^+\left(X/D_1\right):=\left\{D_2\leq \frac{X}{D_1}\ \middle|\ D_2=q_1\ldots q_s\equiv1\bmod 4\text{ for }s\in\N,\ q_j\notin\mho_E',\ D_2\text{ is square-free}\right\}.
\end{align}
Then
\begin{align}
m_{E,N^*}^*(X)\geq \sharp N_{\mho_E'}^+\left(X/D_1\right)=: n_{\mho_E'}^+\left(X/D_1\right).
\end{align}
We define the set $\mathcal{B}$ by
\begin{align}
\mathcal{B}:=\left\{d\leq \frac{X}{q_0D_1}\ \middle|\ d\text{ is square-free},\ d=q_1\cdots q_s\text{ for }q_j\notin \mho_E',\ q_0\nmid d\right\},
\end{align}
and  the map $j:\mathcal{B}\to N_{\mho_E'}^+(X/D_1)$, for each $d\in \mathcal{B}$,
\begin{align}
j(d):=\begin{cases}
d&(d\equiv1\bmod 4),\\
q_0d&(d\equiv3\bmod 4).
\end{cases}
\end{align}
Since $j$ is injective and 
\begin{align}
\sharp \mathcal{B}\geq n_{\mho_E'}(X/q_0D_1)-n_{\mho_E'}(X/q_0^2D_1),
\end{align}
we obtain
\begin{align}
m_{E,N^*}^*(X)&\geq n_{\mho_E'}^+\left(X/D_1\right)\\
&\geq \sharp \mathcal{B}\\
&\geq n_{\mho_E'}(X/q_0D_1)-n_{\mho_E'}(X/q_0^2D_1).
\end{align}
From Proposition \ref{Lem:density_omega_E}, there exists a constant $c>0$ such that
\begin{align}
n_{\mho_E'}(X/q_0D_1)-n_{\mho_E'}(X/q_0^2D_1)\sim c\left(\frac{1}{q_0D_1}-\frac{1}{q_0^2D_1}\right)\frac{X}{(\log X)^{\frac{2}{3}}}.
\end{align}
Therefore we obtain
\begin{align}
m_{E,N^*}^*(X)\geq n_{\mho_E'}^+\left(X/D_1\right)\gg \frac{X}{(\log X)^{\frac{2}{3}}}.
\end{align}
\end{proof}
%%%%%%%%%%%%%%%%%%%%%%

%%%%%%%%%%%%%%%%%%%%%%
\section{Examples}\label{sec:example}
%%%%%%%%%%%%%%%%%%%%%%
In this section, we give examples of Theorem \ref{Thm:Matsuno_formula} and Theorem \ref{Thm:distribution_lambda}. We consider sharp/flat Iwasawa invariants only for the trivial tame character. For simplicity, we denote their Iwasawa invariants by $\mu_2^{\sharp/\flat}(A)$ and $\lambda_2^{\sharp/\flat}(A)$ for each elliptic curve $A$ defined over $\Q$ with good supersingular reduction at 2. We use SageMath \cite{Sage} and LMFDB \cite{lmfdb}. We compute the $\sharp/\flat$ Iwasawa invariants using SageMath and Pollack's Sage code \cite{PolCode} \texttt{IwInv.sage}, available from the repository \texttt{rpollack9974/Iwasawa-invariants}.  More precisely, we used the
function \texttt{iwasawa\_invariants\_of\_ec(E,p)}. 
\begin{rem}
Pollack's code computes the
$\mu^{\pm}$- and $\lambda^{\pm}$-invariants, studied by Perrin-Riou \cite{Per03} and Kurihara \cite{Kur02}, attached to the Mazur--Tate
queue sequence.  In the examples below, the computed values satisfy
$\mu^+=\mu^-=0$.  Hence, taking into account the $p=2$ sign convention explained in  \cite[Definition 8.8, footnote 6]{Spr17}, \cite[Corollary 8.9]{Spr17} shows that these invariants
coincide with the Iwasawa invariants of Sprung's sharp/flat
$2$-adic $L$-functions. With our convention $+=\flat$ and
$-=\sharp$, we denote them by $\mu_2^{\sharp/\flat}$ and
$\lambda_2^{\sharp/\flat}$.
\end{rem}
%%%%%%%%%%%%%%%%%%%%%%
\subsection{Examples of Theorem \ref{Thm:Matsuno_formula}}\label{ss:Matsuno-type_formula_ex}
%%%%%%%%%%%%%%%%%%%%%%

\begin{exa}
Let $E_3$ be the elliptic curve over $\Q$ with LMFDB label 101.a1 defined by
\begin{align}
y^2+y=x^3+x^2-x-1.
\end{align}
The elliptic curve $E_3$ has good supersingular reduction at 2 and the Frobenius trace of $E_3$ at 2 is equal to 0. According to LMFDB, we have
\begin{align}
(\mu_2^{\sharp}(E_3),\mu_2^{\flat}(E_3))=(0,0),\quad (\lambda_2^{\sharp}(E_3),\lambda_2^{\flat}(E_3))=(1,4).
\end{align}
The conductor $N_{E_3}$ is equal to $101$. We consider the case $D=5$. Since $\sharp\widetilde{(E_3)}_5(\F_5)=1+5+1=7$ by LMFDB, the Matsuno-type formula gives
\begin{align}
\mu_2^{\sharp/\flat}(E_3^5)=0,\quad \lambda_2^{\sharp/\flat}(E_3^5)=\lambda_2^{\sharp/\flat}(E_3).
\end{align}
This agrees with the output of Pollack's algorithm:
\begin{align}
(\mu_2^{\sharp}(E_3^5),\mu_2^{\flat}(E_3^5))=(0,0),\quad (\lambda_2^{\sharp}(E_3^5),\lambda_2^{\flat}(E_3^5))=(1,4).
\end{align}
Next, we consider the case $D=21=3\cdot 7$. According to LMFDB, 
\begin{align}
\sharp\widetilde{(E_3)}_3(\F_3)=4+2=6,\quad \sharp\widetilde{(E_3)}_7(\F_7)=8+2=10.
\end{align}
Then, by the Matsuno-type formula, we obtain
\begin{align}
\mu_2^{\sharp/\flat}(E_3^{21})=0,\quad \lambda_2^{\sharp/\flat}(E_3^{21})=\lambda_2^{\sharp/\flat}(E_3)+2^{n_3+1}+2^{n_7+1}=\lambda_2^{\sharp/\flat}(E_3)+6.
\end{align}
This agrees with the output of Pollack's algorithm:
\begin{align}
(\mu_2^{\sharp}(E_3^{21}),\mu_2^{\flat}(E_3^{21}))=(0,0),\quad (\lambda_2^{\sharp}(E_3^{21}),\lambda_2^{\flat}(E_3^{21}))=(7,10).
\end{align}
\end{exa}

\begin{exa}
Let $E_4$ be the elliptic curve over $\Q$ with LMFDB label 67.a1 defined by
\begin{align}
y^2+y=x^3+x^2-12x-21.
\end{align}
The elliptic curve $E_4$ has good supersingular reduction at 2 and the Frobenius trace of $E_4$ at 2 is equal to 2. According to LMFDB, we have
\begin{align}
(\mu_2^{\sharp}(E_4),\mu_2^{\flat}(E_4))=(0,0),\quad (\lambda_2^{\sharp}(E_4),\lambda_2^{\flat}(E_4))=(1,0).
\end{align}
The conductor $N_{E_4}$ is equal to $67$. We consider the case $D=33=3\cdot 11\ (\equiv 1\bmod 8)$. By LMFDB, we have
\begin{align}
\sharp\widetilde{(E_4)}_3(\F_3)=4+2=6,\quad \sharp\widetilde{(E_4)}_{11}(\F_{11})=12+4=16.
\end{align}
Then, by the Matsuno-type formula, we obtain
\begin{align}
\mu_2^{\sharp/\flat}(E_4^{33})=0,\quad \lambda_2^{\sharp/\flat}(E_4^{33})=\lambda_2^{\sharp/\flat}(E_4)+2^{n_3+1}+2^{n_{11}+1}=\lambda_2^{\sharp/\flat}(E_4)+4.
\end{align}
This agrees with the output of Pollack's algorithm:
\begin{align}
(\mu_2^{\sharp}(E_4^{33}),\mu_2^{\flat}(E_4^{33}))=(0,0),\quad (\lambda_2^{\sharp}(E_4^{33}),\lambda_2^{\flat}(E_4^{33}))=(5,4).
\end{align}
\end{exa}

\begin{exa}
Let $E_5$ be the elliptic curve over $\Q$ with LMFDB label 37.a1 defined by
\begin{align}
y^2+y=x^3-x.
\end{align}
The elliptic curve $E_5$ has good supersingular reduction at 2 and the Frobenius trace of $E_5$ at 2 is equal to $-2$. According to LMFDB, we have
\begin{align}
(\mu_2^{\sharp}(E_5),\mu_2^{\flat}(E_5))=(0,0),\quad (\lambda_2^{\sharp}(E_5),\lambda_2^{\flat}(E_5))=(1,2).
\end{align}
The conductor $N_{E_5}$ is equal to $37$. We consider the case $D=13\ (\equiv 5\bmod 8)$. By LMFDB, we have
\begin{align}
\sharp\widetilde{(E_5)}_{13}(\F_{13})=14+2=16.
\end{align}
Then, by the Matsuno-type formula, we obtain
\begin{align}
\mu_2^{\sharp/\flat}(E_5^{13})=0,\quad \lambda_2^{\sharp/\flat}(E_5^{13})=\lambda_2^{\sharp/\flat}(E_5)+2^{n_{13}+1}=\lambda_2^{\sharp/\flat}(E_5)+2.
\end{align}
This agrees with the output of Pollack's algorithm:
\begin{align}
(\mu_2^{\sharp}(E_5^{13}),\mu_2^{\flat}(E_5^{13}))=(0,0),\quad (\lambda_2^{\sharp}(E_5^{13}),\lambda_2^{\flat}(E_5^{13}))=(3,4).
\end{align}

\end{exa}

\subsection{Examples of Theorem \ref{Thm:distribution_lambda}}
We consider three elliptic curves $E_i$ for $i\in\{3,4,5\}$ defined in Subsection \ref{ss:Matsuno-type_formula_ex}. For each $*\in\{\sharp,\flat\}$ and $i\in\{3,4,5\}$, let $N_i^*$ be any integer such that 
\begin{align}
N_i^*\geq\lambda_2^*(E_i),\quad N_i^*\equiv \lambda_2^*(E_i)\bmod 2.
\end{align}
Using Pollack's algorithm and LMFDB, we find that $\mu_2^*(E_i)=0$ and $\Imag(\bar{\rho}_{E_i,2})$ for $i\in\{3,4,5\}$ is generated by
\begin{align}
\begin{pmatrix}1&1\\ 1&0\end{pmatrix},\quad \begin{pmatrix}1&0\\ 1&1\end{pmatrix}.
\end{align}
Hence $\bar{\rho}_{E_i,2}$ is surjective. Therefore, Theorem \ref{Thm:distribution_lambda} gives
\begin{align}
m_{E_i,N_i^*}^*(X)\gg \frac{X}{(\log X)^{\frac{2}{3}}}
\end{align}
for each $i\in\{3,4,5\}$.

\section*{Methods}
During the preparation of this manuscript, the author used ChatGPT solely as a supplementary tool to identify possible inconsistencies. All mathematical arguments and conclusions were independently formulated and verified by the author, who takes full responsibility for the content of the manuscript.

\section*{Data availability}
The data supporting the findings of this study are available within the paper. The numerical data used in Section 3 and Section 5 were obtained from the LMFDB \cite{lmfdb}. The SageMath code used for the computations is publicly available in Pollack's repository \cite{PolCode}. Additional computational scripts are available on \url{https://github.com/TaigaAdachi/Sharp_flat_2-adic_Iwasawa_invariants_quadratic_twists_elliptic_curves}.

{
\bibliographystyle{plain}
\bibliography{biblio}
}

\end{document}